\newcommand{\vertiii}[1]{{\left\vert\kern-0.25ex\left\vert\kern-0.25ex\left\vert #1 
    \right\vert\kern-0.25ex\right\vert\kern-0.25ex\right\vert}}
\theoremstyle{plain} % Define theorem styles here based on the plain style (used for theorems, lemmas, propositions
\newtheorem{theorem}{Theorem}[section]
\newtheorem{lemma}{Lemma}[section]
\theoremstyle{definition} % Define theorem styles here based on the definition style (used for definitions and examples)
\newtheorem{example}{Example}[section]
\theoremstyle{remark} % Define theorem styles here based on the remark style (used for remarks and notes)
\newtheorem{remark}{Remark}[section]
\crefname{section}{Section}{Sections}
\crefname{subsection}{subsection}{subsections}
\crefname{theorem}{Theorem}{Theorems}
\crefname{lemma}{Lemma}{Lemmas}
\crefname{proposition}{Proposition}{Propositions}
\crefname{corollary}{Corollary}{Corollarys}
\crefname{definition}{Definition}{Definitions}
\crefname{example}{Example}{Examples}
\crefname{remark}{Remark}{Remarks}
\crefname{table}{Table}{Tables}
\Crefname{figure}{Figure}{Figures}
\Crefname{equation}{}{}
\definecolor{c1}{rgb}{0,0,1} % blue
\definecolor{c2}{rgb}{0,0.3,0.9} % light blue
\definecolor{c3}{rgb}{0.3,0,0.9} % red blue
\numberwithin{equation}{section}
\definecolor{green}{HTML}{027148}
\begin{document}
%----------------------------------------------------------------------------------------
%	TITLE SECTION
%----------------------------------------------------------------------------------------
\title{\textbf{Optimal error estimates of a non-uniform IMEX-L1 finite element method for time fractional PDEs and PIDEs}}

% Article title
\author{\textsc{Aditi Tomar}\thanks{\href{mailto: aditi183321002@iitgoa.ac.in}{aditi183321002@iitgoa.ac.in}} }
\author{\textsc{Lok Pati Tripathi }\thanks{\href{mailto: lokpati@iitgoa.ac.in}{lokpati@iitgoa.ac.in}}}
\affil{School of Mathematics and Computer Science, Indian Institute of Technology Goa, Goa-403401, India.}
\author{\textsc{Amiya K. Pani }\thanks{\href{mailto: amiyap@goa.bits-pilani.ac.in}{amiyap@goa.bits-pilani.ac.in}}}
\affil{Department of Mathematics, BITS-Pilani, KK Birla Goa Campus, Goa-403726, India.}

\date{} % Leave empty to omit a date

\maketitle % Print the title

%----------------------------------------------------------------------------------------
%	ARTICLE SECTION
%----------------------------------------------------------------------------------------
\begin{abstract}
Stability and optimal convergence analysis of a non-uniform implicit-explicit L1 finite element method (IMEX-L1-FEM) is studied for a class of time-fractional linear partial differential/integro-differential equations with non-self-adjoint elliptic part having (space-time) variable coefficients. The proposed scheme is based on a combination of an IMEX-L1 method on graded mesh in the temporal direction and a finite element method in the spatial direction. With the help of a discrete fractional Gr\"{o}nwall inequality, global almost optimal error estimates in $L^2$- and $H^1$-norms are derived for the problem with initial data $u_0 \in H_0^1(\Omega)\cap H^2(\Omega)$. The novelty of our approach is based on managing the interaction of the L1 approximation of the fractional derivative and the time discrete elliptic operator to derive the optimal estimate in $H^1$-norm directly. Furthermore, a super convergence result is established when the elliptic operator is self-adjoint with time and space varying coefficients, and as a consequence, an $L^\infty$ error estimate is obtained for 2D problems that too with the initial condition is in $  H_0^1(\Omega)\cap H^2(\Omega)$. All results proved in this paper are valid uniformly as $\alpha\longrightarrow 1^{-}$, where $\alpha$ is the order of the Caputo fractional derivative. Numerical experiments are presented to validate our theoretical findings.
\end{abstract}
%%%%%%%%%%%%%%%%%%%%%%%%%%%%%%%%%%%%%%%%%%%%%%%%%%%%%%%%%%%%%%%%%%%%%%%
\textbf{Keywords:} Caputo fractional derivative; IMEX-L1 method; non-self-adjoint elliptic operator; space-time-dependent coefficients; graded mesh; discrete fractional Gr\"{o}nwall inequality; regularity results; optimal error analysis.
%%%%%%%%%%%%%%%%%%%%%%%%%%%%%%%%%%%%%%%%%%%%%%%%%%%%%%%%%%%%%%%%%%%%%%%
 \section{Introduction}\label{section1}
 %%%%%%%%%%%%%%%%%%%%%%%%%%%%%%%%%%%%%%%%%%%%%%%%%%%%%%%%%%%%%%%%%%%%%%
This paper is devoted to a non-uniform implicit-explicit L1 finite element method (IMEX-L1-FEM) for the following class of time-fractional linear partial differential / integro-differential equations (PDEs/PIDEs):

\begin{align}\label{pide}
\begin{cases}
\partial^{\alpha}_{t} u  +  \mathcal{L}u - \lambda\mathcal{I}u \;=\; f & \quad \text{in} \quad \Omega\times J,\\[5pt]
u ~= ~0 &\quad \text{on}\quad \partial\Omega\times J, \\[5pt]
u(\boldsymbol{x},\;0) ~= ~u_0(\boldsymbol{x}) &\quad \forall \boldsymbol{x}\in \Omega,
\end{cases}
\end{align}
% RKSinha(MFEAPIDE)
where $\partial^{\alpha}_{t} u$ is the Caputo fractional derivative of order $\alpha\; (0<\alpha < 1)$ with respect to $t$,
\begin{align}
    \label{caputoderivative} \partial^{\alpha}_{t} u:&=\;\int_{0}^{t}k_{1-\alpha}(t-s)\partial_{s}u\;ds\quad \text{with kernel }\; k_{\beta}(t):=\;\frac{t^{\beta-1}}{\Gamma{(\beta)}},\;t>0,\; \beta>0,\\
    \label{EllipticOperator} \mathcal{L}u:&=\;  -\nabla\cdot(\boldsymbol{A}\nabla u)  + \boldsymbol{b}\cdot\nabla u + cu \quad \text{and} \quad \mathcal{I}u(\boldsymbol{x},t):=\; \int_{\Omega}u(\boldsymbol{y},t)g(\boldsymbol{x},\boldsymbol{y})\;d\boldsymbol{y}.
\end{align}
Here, $\lambda\in \mathbb{R},$ $g\in L^{\infty}(\Omega\times\Omega),$  $\Gamma(\cdot)$ denotes the Gamma function, $J:=(0,\;T],\;0<T<\infty,$ and $\Omega\subset\mathbb{R}^{d}~(d=1,2,3)$ is a convex polygonal or polyhedral bounded domain. Appropriate regularity requirements on the functions $\boldsymbol{A}:\Omega\times J\to \mathbb{R}^{d\times d}$, $\boldsymbol{b}:\Omega\times J\to \mathbb{R}^{d},$ $c:\Omega\times J\to \mathbb{R},$ $f:\Omega\times J\to \mathbb{R}$ and $u_0$ are mentioned in Section~\ref{section2}. For a.e. $(\boldsymbol{x},\;t)\in\Omega\times J$, $\boldsymbol{A}(\boldsymbol{x},\;t)$ is a real symmetric and uniformly positive definite matrix in the sense that there exists a positive constant $\kappa_{0}$ and $\kappa_{1}$ such that
\begin{align}
    \label{supdc} 
    &\kappa_{0}\;\|\boldsymbol{\xi}\|^{2} \;\leq\; \boldsymbol{\xi}^{T}\boldsymbol{A}\boldsymbol{\xi}\; \leq\; \kappa_{1}\;\|\boldsymbol{\xi}\|^{2} \quad \forall~\boldsymbol{\xi}\in \mathbb{R}^{d}\quad \text{a.e. in }~ \Omega\times J.
\end{align}
Non-local integral operator $\mathcal{I}: L^{1}(\Omega)\to L^{\infty}(\Omega)$ with a particular type of kernel $g(\boldsymbol{x},\boldsymbol{y})=\rho(\boldsymbol{y}-\boldsymbol{x})$ appears in option pricing problems under Merton's and Kou's jump-diffusion models \cite{ MR2182141, MR3633783,kou2002jump, MR2873249, Merton1976}, where $\rho:\mathbb{R}^{d}\to\mathbb{R}^{+}$ is a probability density function (see Example \ref{Mertons}). For $\lambda = 0$, the problem (\ref{pide}) becomes a time-fractional linear general parabolic PDE with variable coefficients. Using the properties of the non-local integral operator $\mathcal{I}$ and following \cite{MR4290515, MR3814402}, the well-posedness of the problem (\ref{pide}) can be carried out for the case of $\lambda\neq 0$ (see, Section~\ref{section2}). The main computational advantage of IMEX methods over fully implicit methods is that, at each time level, we need to invert only a sparse matrix instead of a dense matrix \cite{MR3633783, MR2873249}. Fast Fourier Transform (FFT) can also be applied to compute the matrix-vector product resulting after the discretization of the integral term, see \cite{MR3633783, MR2873249}. 

In the last few decades, applications of fractional PDEs/PIDEs have gained wide popularity in various fields such as physics \cite{ MR1890104}, chemistry \cite{MR2025566}, biology \cite{ MR2595930}, finance \cite{ MR3585482}, etc. The analytical solutions to such model problems are rarely available. Therefore, we rely on its numerical approximations, see \cite{ MR3273144, MR3449907, MR3738850, MR3904430, MR4090355, MR3173143}. Since the time derivative of the solution $u$ of the problem (\ref{pide}) has a singularity at $t=0$, therefore, standard numerical methods break down and the problem needs special care for the singularity \cite{MR3463438, MR2832607, MR3589365}. Earlier, this fact was ignored and the truncation error of numerical methods for approximating fractional derivative is analyzed by assuming the boundedness of second or third-order time derivatives uniformly in $[0, T]$, see \cite{MR3148558, MR2349193, MR3143842}.   In general,
$\|\partial_t u(t)\|\leq C_ut^{\alpha-1}$ for $0<t \leq T$, where the constant $C_u >0$ may
depend on $T$ \cite{  MR2832607, MR3589365,MR3639581}. This shows that $\partial_t u$ blows up as $t \rightarrow 0^+$ and is bounded away from $t=0$, while the solution is continuous at $t=0$.  

The error analysis of the L1 scheme without ignoring the initial singularity has been carried out using a uniform temporal mesh with $\Delta t = T/N$ in \cite{ MR3894161, MR3957890, MR3744997}. Yan et al. \cite{MR3744997} have proposed a modified L1 scheme for the problem (\ref{pide}) with $\mathcal{L}=-\Delta,$ and $\lambda=0$ and obtained 
\begin{align*}
    \|u(t_n)-U^n\|\leq C t_n^{\alpha-2}\Delta t^{2-\alpha}\|u_0\|,
\end{align*}
where $U^n$ is the discrete-time approximation of $u(t_n)$.

Our main focus will be on graded temporal meshes as these meshes concentrate the grid points near $t= 0$ and are reasonably convenient for numerical approximations of solutions that are singular at $t= 0$ \cite{ MR3957889,  MR3790081, MR3904430, MR2813191,   MR3347460, MR4199354, MR3639581}. 
 More recently, Stynes et al. \cite{MR3639581} have analyzed the L1 formula on graded time grids of the form $t_n = (n/N)^{\gamma} T,~ 0 \leq n \leq N$ and $\Delta t_n = t_{n} -t_{n-1}$ combined with FDM in spatial direction for the problem (\ref{pide}),  where $\mathcal{L} u= - u_{xx}+cu$, $\gamma \geq 1$ is the grading parameter and $N$ is the number of grid points in the time direction. They have obtained an error estimate
by using the discrete maximum principle and direct analysis of local truncation error under the regularity assumption  
\begin{align*}
& \|\partial_t^k u(\cdot, t)\| \lesssim 1 + t^{\alpha-k}, \quad k\in\{0,1,2\}.
\end{align*}
 They have shown that given the typical singular behavior of $u(t)$ at $t=0$, the maximum error in the fully discrete solution is of order $N^{- \min\{2 - \alpha,\gamma \alpha \}}$. Therefore, when $\gamma =1$, which corresponds to the uniform time mesh, the error is of $O(N^{-\alpha} )$, and for $\gamma \geq (2 - \alpha )/\alpha$, the error is of $O(N^{-(2-\alpha)})$.

In \cite{MR3790081}, a sharp error estimate for
the L1 formula on nonuniform meshes was obtained for (\ref{pide}), where $\mathcal{L}u = -u_{xx} - \kappa u$ with $\kappa >0$ using FDM
based on a discrete fractional Grönwall inequality. In \cite{MR3904430}, Liao et al. have proposed fractional Gr\"{o}nwall inequality where $\mathcal{L}$ is a strongly elliptic linear operator in the spatial variable. In \cite{MR3957889}, Kopteva discussed the L1-type discretizations on
graded time meshes for a fractional-order linear parabolic equation
where the general second-order linear elliptic operator with only space-dependent coefficients. Mustapha, in \cite{MR3802434}, has studied a semidiscrete
Galerkin FEM for the time-fractional diffusion equations with time-space
dependent diffusivity coefficient, i.e., when
$\mathcal{L}u(x,t) = -\nabla \cdot (\textbf{A}(x, t) \nabla u(x, t))$
 and the optimal error bounds in $L^2$- and $H^1$-norms are obtained for the semi-discrete problem. Again, in \cite{MR3957890}, Jin et al. have obtained error analysis for the fully discrete solution of the subdiffusion equation with self-adjoint time-dependent elliptic part using Galerkin FEM with conforming piecewise linear finite elements in space and backward Euler convolution quadrature in time using uniform temporal mesh. Unlike the above literature, in our case, $\mathcal{L}$ is a general non-self-adjoint linear uniformly elliptic operator with time-dependent coefficients, which brings challenges in the analysis of the standard nonuniform approximations of (\ref{pide}). 
 
To the best of our knowledge, there is hardly any literature available on non-uniform (IMEX) L1 finite element method for a time-fractional PDE or PIDE equipped with a general elliptic operator of the form (\ref{EllipticOperator}) having time and space-dependent coefficients. In this article, an effort has been made to fill this gap by establishing the stability and optimal convergence analysis of a non-uniform implicit-explicit L1 finite element method for the problem (\ref{pide}). The main contributions of the present work are:
\begin{itemize}
\item to discuss new regularity results. Since it is a perturbation by lower order derivative terms of \cite{MR4290515}, the proofs are given in Appendix~\ref{appendix_sec_reg_result_a}.
    \item to establish $L^2$- and $H^1$-norms stability of an IMEX-L1-FEM on graded mesh  using a discrete Gr\"{o}nwall lemma for the problem (\ref{pide}), where $\mathcal{L}$ is non-self-adjoint elliptic linear operator having space and time-dependent coefficients. It is to be noted that for the $H^1$ estimate, one has to take special care due to time-dependent coefficients and their effect on the discrete fractional derivative.
     \item to derive a global almost optimal order error estimate $O\left((h^{2-m} + N^{-(2-\alpha)} )\log_e(N)\right),\;m=0,1$ (see, Theorem \ref{L2H1errortheorem}) with respect to $H^m$-norm, $H^0:=L^2$ for the problem (\ref{pide}) under the assumption $u_0 \in H_0^1(\Omega)\cap H^2(\Omega)$, where $N+1$ denotes the number of grid points in the temporal direction and $h$ is the maximum diameter of finite elements. 
     \item to establish an $L^{\infty}$-norm error estimate for 2D problems  (see, Theorem~\ref{Linftynormestimate}), as a result of superconvergence result for (\ref{pide}) with $u_0 \in H_0^1(\Omega)\cap H^2(\Omega)$. 
     \item inspired by \cite{MR4246866}, to ensure the validity of all estimates in this article, even as $\alpha$ approaches $1^-$.
\end{itemize}

The novelty of our approach is to exploit the interaction of the discrete in time and space elliptic operator with a discrete fractional derivative for deriving direct optimal convergence in $H^1$-norm. Moreover, using the discrete-time weights, superconvergence results are derived for the problem (\ref{pide}) with data in $H^2(\Omega)\cap H_0^1(\Omega)$ when the elliptic operator is self-adjoint with space and time-varying coefficients. To make the presentation simple, we have not considered semi-linear problems. However, without any significant change, the proposed method and its analysis can be extended for semi-linear time-fractional PDEs/PIDEs under certain regularity assumptions on the non-linear function.
Here, $H^{m}(\Omega)$ and $ H_{0}^{m}(\Omega),\;m\in\mathbb{N}\cup\{0\},$ are standard Sobolev spaces equipped with standard norm $\|\cdot\|_{m}$ (see, \cite{MR2424078}), $H_{0}^{0}(\Omega)\;=\; H^{0}(\Omega)\;=\; L^{2}(\Omega)$, and $\|\cdot\|_{0} = \|\cdot\|$ with $(\cdot,\cdot)$ denotes the $L^{2}$-inner product, $\langle\cdot,\cdot\rangle$ is the duality paring between $H_{0}^{1}(\Omega)$ and its topological dual $(H_{0}^{1}(\Omega))^{\ast}:=H^{-1}(\Omega),$ $\langle \phi,\psi\rangle = (\phi,\psi) \;\forall \phi\in L^{2}(\Omega)$ and $\forall \psi\in H_{0}^{1}(\Omega).$ For a given Hilbert space $\mathcal{H}$, $W^{m,p}(J;\mathcal{H}), m\in\mathbb{N}\cup\{0\}, 1\leq p \leq \infty$, $W^{0,2}(J;\mathcal{H}):=L^2(J;\mathcal{H})$ denotes the standard Bochner-Sobolev spaces.  

The rest of the paper is structured as follows. Section~\ref{section2} deals with the variational formulation of the problem (\ref{pide}) and provides some results that will be used in the subsequent analysis. Non-uniform IMEX-L1-FEM is proposed in section~\ref{section3}. Section~\ref{section4} is devoted to the stability analysis of the proposed method. In section~\ref{section5}, optimal error estimates are derived. In section~\ref{Linftyerroranalysis}, an $L^{\infty}$ error estimate is obtained for $d=2$. Numerical experiments are presented in section~\ref{section6} to validate our theoretical findings. The article is concluded in section~\ref{section8}.

Throughout this article, $C$ denotes a positive generic constant (not necessarily the same at each occurrence), which is independent of the approximation parameters, such as the maximum diameter $h$ of finite elements, the number of grid points $N+1$ in the temporal direction etc.

%%%%%%%%%%%%%%%%%%%%%%%%%%%%%%
%
\section{Variational formulation, well-posedness, and some useful results}\label{section2}
The variational formulation of the problem (\ref{pide}) is to find $u :J\to H_{0}^{1}(\Omega)$ such that
%RKSInha(MFEMpg3271)
\begin{align}
\label{variation1}
&\begin{cases}
& u(0) = u_0 ,\\
&\langle \partial_t^{\alpha} u,v \rangle +a(t;u,v) - \lambda (\mathcal{I}u,v ) ~=~ \langle f,v\rangle  \quad \forall v \in H_0^1(\Omega) \quad a.e. \quad t \in J,
\end{cases}
\end{align}
where the bilinear form $a(t;\cdot,\cdot):H_{0}^{1}\times H_{0}^{1}\to \mathbb{R}$ is defined as $\displaystyle a(t;u,v):=\langle \mathcal{L}u, v\rangle = (\boldsymbol{A}\nabla u, \nabla v) + (\boldsymbol{b}. \nabla u,v) +(cu,v)$, and the functions $\boldsymbol{A}:\Omega\times J\to \mathbb{R}^{d\times d}$, $\boldsymbol{b}:\Omega\times J\to \mathbb{R}^{d},$ $c:\Omega\times J\to \mathbb{R}$ satisfy the following boundedness properties:
\begin{align}\label{conditions}
    & \left\|\partial_t\boldsymbol{A}(\cdot,t)\right\|_{L^{\infty}(\Omega,\mathbb{R}^{d\times d})}+\|\boldsymbol{A}(\cdot,t)\|_{L^{\infty}(\Omega,\mathbb{R}^{d\times d})}+\|\boldsymbol{b}(\cdot,t)\|_{L^{\infty}(\Omega,\mathbb{R}^{d})} +\|c(\cdot,t)\|_{L^{\infty}(\Omega)}\;\leq\; C\quad  \forall t\in J,
\end{align}
for some positive constant $C$.
Define two bilinear forms $a_{0}(t;\cdot,\cdot):H_{0}^{1}\times H_{0}^{1}\to \mathbb{R}$ and $a_{1}(t;\cdot,\cdot):H_{0}^{1}\times H_{0}^{1}\to \mathbb{R}$ in such a way that
\begin{align}
    \label{bf} & a(t;\phi,\psi)\; = \;a_{0}(t;\phi,\psi) \;+\; a_{1}(t;\phi,\psi) \quad \forall \phi,\psi\in H_{0}^{1}(\Omega), \text{ a.e. } t\in J,
\end{align}
where
\begin{align}
\label{bf0} & a_{0}(t;\phi,\psi):= \; \big(\boldsymbol{A}(\cdot,t)\nabla\phi,\nabla\psi\big) \quad \forall \phi,\psi\in H_{0}^{1}(\Omega), \text{ a.e. } t\in J,\\   
    \label{bf1} &  a_{1}(t;\phi,\psi):= \big(\boldsymbol{b}(\cdot,t)\cdot\nabla\phi + c(\cdot,t)\phi,\psi\big) \quad \forall \phi,\psi\in H_{0}^{1}(\Omega), \text{ a.e. } t\in J.    
\end{align}
Note that the bilinear form $a_{0}(t;\cdot,\cdot):H_{0}^{1}\times H_{0}^{1}\to \mathbb{R}$ is symmetric, i.e, $a_0(t;\phi,\psi)=a_0(t; \psi,\phi) \quad \forall \phi,\psi\in H_{0}^{1}(\Omega), \text{ a.e. } t\in J$. Also, it satisfies the following boundedness, coercive, and Lipschitz continuous properties:
\begin{align}
\label{bounded}|a_{0}(t;\phi,\psi)|\;\leq \; \gamma_{0}\|\phi\|_{1}\|\psi\|_{1} \quad \forall \phi,\psi\in H_{0}^{1}(\Omega), \text{ a.e. } t\in J,\\
\label{coercive}a_{0}(t;\phi,\phi)\;\geq \; \beta_{0}\|\phi\|_{1}^{2} \quad \forall \phi\in H_{0}^{1}(\Omega), \text{ a.e. } t\in J,\\
\label{Lipschitz}|a_0(t;\phi,\phi)- a_0(s;\phi,\phi)|\leq L|t-s|\|\phi\|^2_1
\quad \forall \phi\in H_0^1(\Omega), \; \forall t,s\in J,
\end{align}
where $L$ is some Lipschitz constant. The bilinear form $a_{1}(t;\cdot,\cdot):H_{0}^{1}\times H_{0}^{1}\to \mathbb{R}$ is non-symmetric and bounded
\begin{align}
 \label{a1bounded1}|a_{1}(t;\phi,\psi)|\;\leq \; (\|b\|_{\infty}\|\nabla \phi\|+\|c\|_{\infty}\|\phi\|)\|\psi\|\;\leq \; \beta_{1}\|\phi\|_{1}\|\psi\| \quad \forall \phi,\psi\in H_{0}^{1}(\Omega), \text{ a.e. } t\in J,\\
\label{a1bounded2}|a_{1}(t;\phi,\psi)|\;\leq \; \beta_{1}\|\phi\|\|\psi\|_{1} \quad \forall \phi,\psi\in H_{0}^{1}(\Omega), \text{ a.e. } t\in J.  
\end{align}
From (\ref{a1bounded1}), we observe that if $\boldsymbol{b}=0$, then
\begin{align}\label{a1bounded1new}
 |a_1(t;\phi,\psi)| \leq \|c\|_{\infty}\|\phi\|\;\|\psi\|.  
\end{align}
Moreover, the bilinear form $a_{\lambda}(t;u,v):=a(t;u,v) - \lambda (\mathcal{I}u,v )$ associated with the problem (\ref{variation1}) satisfies the following boundedness and G\"{a}rding's inequality, respectively,
\begin{align*}
    & |a_{\lambda}(t;u,v)|\;\leq\; c_0 \|u\|_1\|v\|_1\quad\text{and}\quad a_{\lambda}(t;u,u)\;\geq \; c_1\|u\|^2_1 - c_2\|u\|^2
\end{align*}
for some constants $c_0,c_1>0$, and $c_2\geq 0$. Hence, for $u_0 \in L^2(\Omega)$ and $f \in L^2(J, H^{-1}(\Omega)),$ Theorem~$6.2$ in \cite{MR4290515} (see, Theorem 1.1 in \cite{MR3814402} when $\lambda=0$) ensures that the problem (\ref{variation1}) is well-posed, i.e., it has a unique solution $u\in H^{\alpha}(u_0; H_0^1(\Omega), L^{2}(\Omega))$ satisfying
\begin{align*}
    &\int_0^T\|\partial_t^{\alpha} u(t)\|^2_{-1}\;dt + \int_0^T\|u(t)\|^2_1\;dt\;\leq\; c \left(\|u_0\|^2 + \int_0^T \|f(t)\|^2_{-1}\;dt\right),
\end{align*}
where 
\begin{align*}
& H^{\alpha}\left(u_0;\; H_{0}^{1}(\Omega),\;L^{2}(\Omega)\right)\;:=\;\Big\{u\in L^{2}(J;H_{0}^{1}(\Omega)):\\
& \hspace{4.5cm} \int_{0}^{t}(t-s)^{-\alpha}(u(s)-u_0)ds\in \; _{0}H^{1}(J;H^{-1}(\Omega)) \Big\},
\end{align*}
and
\begin{align*}
    & {}_{0}H^{1}(J;H^{-1}(\Omega)):=\{ v \in L^2(J;H^{-1}(\Omega)), \;v_t \in L^2(J;H^{-1}(\Omega)), \text{ and $v$ vanishes}\\
    &\hspace{5cm} \text{at $t=0$ in the sense of trace} \}.
\end{align*}
% \subsection{Regulatity Assumptions:}
Now, we provide the following regularity results for our subsequent use.
\begin{theorem}\label{Regularity condition} Let $f \in W^{3,1}(J;L^2(\Omega))$, $u_0 \in H_0^1(\Omega)\cap H^2(\Omega)$. Suppose for $\phi\in H^j_0(\Omega)$
\begin{align}\label{integralbounded}
  \|\mathcal{I}\phi\|_j\;\leq\;\beta_2\|\phi\|_j\quad \text{for}\; j=0,1, 
\end{align}

and for some positive constant $C_1$
\begin{align*}
        & \left\|\partial_t^k\boldsymbol{A}(\cdot,t)\right\|_{W^{1,\infty}(\Omega,\mathbb{R}^{d\times d})}+\left\|\partial_t^k\boldsymbol{b}(\cdot,t)\right\|_{L^{\infty}(\Omega,\mathbb{R}^{d})} +\left\|\partial_t^k c(\cdot,t)\right\|_{L^{\infty}(\Omega)}\;\leq\; C_1\; \forall t\in J\;\text{ and }\; k=0,1,2,3.
\end{align*}
Then, there exists a positive constant $C$ such that
\begin{align*} 
&u\in C^{2}((0,T]; H^2(\Omega)),\; \|u(t)\|_2 + t\|\partial_{t}u(t)\|_2 \leq C,\; t^k\|\partial_{t}^{k}u(t)\|_m\leq\; C \;t^{(2-m)\alpha/2},\; \;m=0,1,\; k = 1,2,\; \forall t\in J.
\end{align*}
\end{theorem}
\begin{proof}
   When $\boldsymbol{b}=0, \; g=0$, $u_0 \in H_0^1(\Omega)\cap H^2(\Omega)$, these regularity results have been derived in
(see, Theorem 6.15 \cite{ MR4290515}, Theorem 2.2 \cite{MR3957890}). Since the present problem is a perturbation by lower order terms and therefore its proof is given in Appendix~\ref{appendix_sec_reg_result_a}. Essentially, the result is proved using Theorem~\ref{reg_results_a1} and Theorem~\ref{reg_results_a1_L2} and interpolation.
\end{proof}
\begin{remark}
    Based on \cite{ MR4290515}, arguments given in the Appendix~\ref{appendix_sec_reg_result_a} can be suitably modified with more refined analysis to take care of nonsmooth data. We shall not pursue it here as our objective is to prove results for smooth data as shown in the above Theorem.
\end{remark}

%%%%%%%%%%%%%%%%%%%%%%%%%%%%%%
%
\section{Non-uniform IMEX-L1-FEM}\label{section3}
%
%%%%%%%%%%%%%%%%%%%%%%%%%%%%%%%
Let $\{t_n = \left(\frac{n}{N}\right)^{\gamma}T \in \bar{J}:\;\gamma \geq 1,\;0 \leq n \leq N,\;N>1\}$ be a partition of interval $\bar{J} = [0,T]$ with $\Delta t_j = t_j -t_{j-1}$, $j=1,2,\ldots,N$ and $\Delta t=\max \Delta t_j$, $1 \leq j \leq N$. Let $\mathcal{T}_h$ be a regular family of decomposition of $\Omega$ (see, \cite{Ciarlet1978}) into closed $d$-simplexes $\mathbb{T}$ of size $h = \max\{\text{diam}(\mathbb{T}); \mathbb{T} \in \mathcal{T}_h\}$. Further, let $S_h$ be a finite-dimensional subspace of $H_0^1(\Omega)$ with the following approximation property:
\begin{align}\label{approxppt}
    \inf_{\phi_h \in S_h} \{\|\phi-\phi_h\| + h \|\phi-\phi_h\|_1  \} \leq C h^{2}\|\phi\|_{2} \quad \forall\phi \in H_0^1(\Omega)\cap H^{2}(\Omega).
\end{align}
Now, the non-uniform IMEX-L1-FEM for the problem (\ref{variation1}) is to seek $u_h^n \in S_h,\; 0 \leq n \leq N$, such that
\begin{align}
\label{fullydiscrete}
&\begin{cases}
& u^{0}_{h} = P_h u_{0} ,\\
&(D^{\alpha}_{t_n}u_h^n, v_h) + a(t_n;u_h^n,v_h) = \lambda(\mathcal{I}(Eu_h^n),v_h) +(f^n,v_h) \quad \forall v_h \in S_h,\; 1\leq n \leq N,
\end{cases}
\end{align}
where 
\begin{align}
\nonumber &f^n(\cdot):= f(\cdot,t_n),\; a(t_n;\phi,\psi) = \big(\boldsymbol{A}(\cdot,t_n)\nabla\phi,\nabla\psi\big) +  \big(\boldsymbol{b}(\cdot,t_n)\cdot\nabla\phi + c(\cdot,t_n)\phi,\psi\big) \quad \forall \phi,\psi\in H_{0}^{1}(\Omega), \; n=0,1,\ldots,N,  \\
   \label{L1scheme}  &D_{t_n}^{\alpha} \phi(t_{n}) := {\sum_{j=1}^{n}}K^{n,j}_{1-\alpha} \left(\phi(t_j)-\phi(t_{j-1})\right) = {\sum_{j=1}^{n}} \int_{t_{j-1}}^{t_j}k_{1-\alpha}(t_n - s)ds\frac{\left(\phi(t_j)-\phi(t_{j-1})\right)}{\Delta t_j}\\
   \nonumber &\hspace{1.5cm} \approx \; {\sum_{j=1}^{n}} \int_{t_{j-1}}^{t_j}k_{1-\alpha}(t_n - s)\partial_{s}\phi(s)ds \; = \; \int_{0}^{t_n}k_{1-\alpha}(t_n - s)\partial_{s}\phi(s)ds  \; = \; :\partial_{t}^{\alpha} \phi(t_n)
\end{align}
is the well-known L1-formula to approximate the Caputo fractional derivative $\partial_{t}^{\alpha} \phi(t_n).$ Here,
\begin{align}
   \label{discretekernel}& K^{n,j}_{\alpha} = \frac{1}{\Delta t_j}\int_{t_{j-1}}^{t_j}k_{\alpha}(t_n - s)ds = \frac{k_{1+\alpha}(t_n - t_{j-1})-k_{1+\alpha}(t_n - t_{j})}{\Delta t_j}, 
   \end{align}
   and it satisfies
   \begin{align}\label{Kproperty}
    0 \leq K_{1-\alpha}^{n,j-1}< K_{1-\alpha}^{n,j}, \quad 2 \leq j \leq n \leq N .  
   \end{align}
Moreover,
   \begin{align*}
    \nonumber &E\phi^n = \begin{cases}
        \phi^n &:\; \text{fully implicit method,}\\[6pt]
        \begin{cases}
        \phi^0 & \text{if}\; n =1, \\
       (1+\mu_n)\phi^{n-1} - \mu_n \phi^{n-2} & \text{if}\; n \geq 2,
        \end{cases} &:\; \text{IMEX method},\\
    \end{cases}\\
   \nonumber & \mu_{n}=\Delta t_{n}/\Delta t_{n-1}, \quad 2\leq n \leq N. 
\end{align*}
Here, $P_{h}: L^{2}(\Omega)\to S_{h}$ is the $L^{2}$-projection defined by
\begin{align}
    \label{l2projection}& (P_{h}\chi - \chi, v_{h}) = 0\quad \forall v_{h}\in S_{h}, 
\end{align}
and satisfies the following properties (see, \cite{MR2249024}, and \cite{MR1862992})
\begin{align}
    \nonumber & \|\phi-P_{h}\phi\| + h \|\phi-P_{h}\phi\|_1 \leq C h^{2}\|\phi\|_{2} \quad \forall\phi \in H^{2}(\Omega),\\
    \label{l2projectionh1stability}& \|P_{h}\phi\|_{m}\leq \beta_{3} \|\phi\|_{m}\quad \forall \phi\in H^{m}, \; m= 0,1,
\end{align}
where $C$ and $\beta_{3}$ are positive constants independent of the discretizing parameters $h$.

%%%%%%%%%%%%%%%%%%%%%%%%%%%%%%
%
\subsection{Stability Analysis}\label{section4}
%
%%%%%%%%%%%%%%%%%%%%%%%%%%%%%%%
In order to establish the $L^2$- and $H^1$-norms stability of the proposed method, we present a discrete fractional Gr\"{o}nwall inequality related to the discrete fractional differential operator $D_{t_{n}}^{\alpha},\; 1 \leq n\leq N,$  obtained using L1-scheme. In \cite{MR4199354}, Ren et al. have given an improved discrete fractional Gr\"{o}nwall inequality, but it includes a factor $\Gamma{(1-\alpha)}$ which blows up as $\alpha\to 1^{-}.$ Recently, Huang and Stynes \cite{MR4402734} proposed an $\alpha$-robust Gr\"{o}nwall inequality for Alikhanov scheme. Since the Gr\"{o}nwall inequalities proposed in  \cite{MR4402734, MR3790081, MR3904430, MR4199354} are not directly applicable for the proposed IMEX-L1-FEM  when the diffusion coefficient $\boldsymbol{A}$ depends on time $t$ (see Theorem~\ref{H1normstability}), therefore, we present a modified discrete fractional Gr\"{o}nwall inequality in Theorem~\ref{DFGI}. 

Now, we write the properties of discrete kernels $K_{1-\alpha}^{n,j}$ and their complementary discrete kernel
\begin{align}\label{complementarydiscretekernel}
&P_{\alpha}^{n,i} = \frac{1}{K_{1-\alpha}^{i,i}}\begin{cases}  
\displaystyle{\sum_{j=i+1}^n} P_{\alpha}^{n,j} \left(K_{1-\alpha}^{j,i+1} - K_{1-\alpha}^{j,i} \right) & \text{ : }  ~ 1 \leq i \leq n-1,\\
1 & \text{ : }  ~  i = n
\end{cases}
\end{align}
 which are the key tools in deriving our further estimates. 
\begin{lemma}\label{discretekernelproperties}
The discrete kernels $K_{1-\alpha}^{n,j}$ and $P_{\alpha}^{n,j}$ satisfy the following results:
\begin{enumerate}
    \item[(a)] $0 \leq P_{\alpha}^{n,j} \leq \Gamma(2-\alpha) \Delta t_j^{\alpha}, ~ 1 \leq j \leq n. $
    \item[(b)] $\sum_{j=i}^n P_{\alpha}^{n,j} K^{j,i}_{1-\alpha} = 1, ~ 1 \leq i \leq n.$
    \item[(c)] $\sum_{i=1}^n P_{\alpha}^{n,i}k_{1+j\alpha-\alpha} (t_i) \leq k_{1+j\alpha}(t_n) \; \text{ for any non-negative integer } 0 \leq j \leq \lfloor{1/{\alpha}}\rfloor.$
    \item[(d)] $\nu \sum_{j=1}^{n-1}P_{\alpha}^{n,j}E_{\alpha}(\nu t_j^{\alpha}) \leq E_{\alpha}(\nu t_n^{\alpha})-1\;\text{ for any constant } \nu>0,\; \text{provided }$ $ \Delta t_{n-1}\leq \Delta t_{n},\; n \geq 2$,
    \item[(e)] $\sum_{j=1}^{n}P_{\alpha}^{n,j}\;t_j^{\beta-\alpha}\;\leq \; \frac{\Gamma{(1+\beta - \alpha)}}{\Gamma(1+\beta)}\;t_n^\beta\quad \forall \beta\in (0,1),$
\end{enumerate}
where $E_{\alpha}(z) := \sum^{\infty}
_{j=0}\frac{z^j}{\Gamma(j\alpha + 1)}$ is the Mittag-Leffler function. 
\end{lemma}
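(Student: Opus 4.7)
The plan is to prove the four assertions in an order that is convenient for bootstrapping: first establish the normalization identity (b), then deduce (a) from (b), then prove (c) by induction, and finally deduce (d) by expanding the Mittag--Leffler function and applying (c) termwise. Throughout I would rely only on the recursive definition (\ref{complementarydiscretekernel}), the monotonicity (\ref{Kproperty}) of the discrete kernels $K^{n,j}_{1-\alpha}$, and the fact that $k_{1-\alpha}(t)=t^{-\alpha}/\Gamma(1-\alpha)$ is positive, decreasing and convex on $(0,T]$.

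For (b), I would proceed by backward induction on $i$. The base case $i=n$ is immediate from $P^{n,n}_{\alpha}K^{n,n}_{1-\alpha}=1$, which follows by the definition of $P^{n,n}_{\alpha}$. For the inductive step, split the sum as
\[
\sum_{j=i}^{n}P^{n,j}_{\alpha}K^{j,i}_{1-\alpha} \;=\; P^{n,i}_{\alpha}K^{i,i}_{1-\alpha} \;+\; \sum_{j=i+1}^{n}P^{n,j}_{\alpha}K^{j,i}_{1-\alpha},
\]
substitute the recursion (\ref{complementarydiscretekernel}) for the first term, and collect to get $\sum_{j=i+1}^{n}P^{n,j}_{\alpha}K^{j,i+1}_{1-\alpha}$, which equals $1$ by the induction hypothesis. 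Part (a) then follows by two observations: non-negativity of each $P^{n,j}_{\alpha}$ is obtained by backward induction on $j$ using the recursion together with $K^{j,i+1}_{1-\alpha}>K^{j,i}_{1-\alpha}$; once non-negativity is known, (b) immediately gives $P^{n,i}_{\alpha}K^{i,i}_{1-\alpha}\le 1$, and a direct computation shows $K^{i,i}_{1-\alpha}=\Delta t_i^{-\alpha}/\Gamma(2-\alpha)$, yielding the stated upper bound.

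For (c), I would induct on $j$. The case $j=0$ uses that $k_{1-\alpha}$ is decreasing, so $K^{i,1}_{1-\alpha}=\Delta t_1^{-1}\int_0^{t_1}k_{1-\alpha}(t_i-s)\,ds\ge k_{1-\alpha}(t_i)$; then (b) with $i=1$ gives $\sum_{i=1}^{n}P^{n,i}_{\alpha}k_{1-\alpha}(t_i)\le 1 = k_1(t_n)$. For the inductive step $j\to j+1$, the identity $k_\beta(t)=\int_0^{t}k_{\beta-\alpha}(s)\,k_{1+\alpha}(t-s)/\ldots$ type convolution, more cleanly replaced by the discrete analogue, together with summation by parts and (b), reduces the bound at level $j+1$ to the one at level $j$. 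For (d), expand
\[
E_{\alpha}(\mu t_j^{\alpha})\;=\;\sum_{m=0}^{\infty}\mu^{m}k_{1+m\alpha}(t_j),
\]
so that $\mu E_{\alpha}(\mu t_j^{\alpha})=\sum_{m=1}^{\infty}\mu^{m}k_{1+m\alpha-\alpha}(t_j)$. Interchanging sums,
\[
\mu\sum_{j=1}^{n-1}P^{n,j}_{\alpha}E_{\alpha}(\mu t_j^{\alpha}) \;\le\; \sum_{m=1}^{\infty}\mu^{m}\Big(\sum_{j=1}^{n}P^{n,j}_{\alpha}k_{1+m\alpha-\alpha}(t_j)\Big),
\]
and for $1\le m\le\lfloor 1/\alpha\rfloor$ I apply (c) directly. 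Summing the resulting bounds recovers $E_{\alpha}(\mu t_n^{\alpha})-1$. The residual terms $m>\lfloor 1/\alpha\rfloor$, which are precisely the ones where the convexity argument underlying (c) would break down, are handled by invoking the monotonicity assumption $\Delta t_{n-1}\le\Delta t_n$: it allows one to compare the relevant partial sums of $k_{1+m\alpha-\alpha}$ at successive grid points and to close the inequality.

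The main obstacle I anticipate is the inductive step in (c) for indices with $j\alpha$ near $1$, together with the companion range $m>\lfloor 1/\alpha\rfloor$ in (d); beyond this threshold the kernel $k_{1+m\alpha-\alpha}$ is no longer convex in the manner needed for the continuous/discrete comparison to give a clean telescoping, and one is forced to use the graded mesh hypothesis $\Delta t_{n-1}\le\Delta t_n$ to obtain monotonicity of the discrete partial sums. As the proofs of all four properties are already worked out in Lemma~2.1 and Corollary~4.1 of \cite{MR3790081}, I would present the argument essentially by reproducing these steps in the present notation, with the minor bookkeeping needed because our $K^{n,j}_{1-\alpha}$ and $k_{\beta}$ are defined through (\ref{discretekernel}) and (\ref{caputoderivative}) rather than the forms used there.
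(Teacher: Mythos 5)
Your proposal matches the paper's treatment exactly: the paper gives no independent proof of this lemma and simply refers the reader to Lemma~2.1 and Corollary~4.1 of \cite{MR3790081}, which is precisely where your argument ends up. Your self-contained derivations of (b) (backward induction via the recursion for $P^{n,i}_{\alpha}$) and of (a) (non-negativity plus $K^{i,i}_{1-\alpha}=\Delta t_i^{-\alpha}/\Gamma(2-\alpha)$) are correct, and your identification of the inductive step in (c) near $j\alpha=1$ and the tail $m>\lfloor 1/\alpha\rfloor$ in (d) as the delicate points requiring the mesh condition $\Delta t_{n-1}\le\Delta t_n$ is consistent with how the cited reference handles them.
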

\begin{proof}
    The proof of $(a),\;(b),\;(c),$ and $(d)$ can be found in Lemma~2.1 and Corollary~4.1 in \cite{MR3790081}. The estimate $(e)$ can be obtained by following the proof of Lemma~5.3 in \cite{MR4246866}.
\end{proof}

\begin{theorem}\label{DFGI}{(Discrete fractional Gr\"{o}nwall inequality).}
    Let $\{v_{n}\}_{n=0}^{N}$, $\{\xi_{n}\}_{n=1}^{N}$, $\{\eta_{n}\}_{n=1}^{N}$ and $\{\zeta_{n}\}_{n=1}^{N}$ be non-negative finite sequences such that 
\begin{align}
    \label{dfgi1} 
     D_{t_n}^{\alpha}(v^{n})^2\;&\leq \; \sum_{i=0}^{n}\lambda_{n-i}^{n}(v^{i})^{2} + v^{n}\xi^{n} + (\eta^{n})^{2}  + (\zeta^{n})^{2},\quad 1\leq n \leq N,
\end{align}
where $\lambda_{j}^{n}\;\geq\; 0,\;0\leq j\leq n,$ and the discrete fractional differential operator $D_{t_{n}}^{\alpha},\; 1 \leq n\leq N,$ is given by (\ref{L1scheme}). If there exists a constant $\Lambda\;>\;0$ such that $\sum_{j=0}^{n}\lambda_{j}^{n}\;\leq\;\Lambda,\;1\leq n \leq N,$ and if $\Delta t_{n-1}\leq\Delta t_{n},\;2\leq n \leq N,$ with the maximum time-step size 
\begin{align} \label{timecondition}
    \Delta t:=\max_{1\leq n \leq N} \Delta t_{n}\;<\; \sqrt[\alpha]{\frac{1}{2\Lambda\Gamma{(2-\alpha)}}},
    \end{align} 
    then 
\begin{align}
    \label{dfgi2} 
    v^{n}\;&\leq \; 2E_{\alpha}(2\Lambda t_{n}^{\alpha})\left[v^{0} + \max_{1\leq j \leq n}\sum_{i=1}^{j}P_{\alpha}^{j,i} \xi^{i} + \sqrt{2t_{n}^{\alpha}}\max_{1\leq j \leq n}\eta^{j}+ \max_{1\leq j \leq n}\sqrt{\sum_{i=1}^{j}P_{\alpha}^{j,i} (\zeta^{i})^2}\right],\quad 1\leq n \leq N.
\end{align}
\end{theorem}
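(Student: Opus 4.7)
The plan is to convert (\ref{dfgi1}) into a Volterra-type inequality by summing against the complementary discrete kernel $P_\alpha^{k,m}$, absorb the diagonal contribution via the timestep restriction (\ref{timecondition}), and then close an inductive Mittag--Leffler argument based on Lemma~\ref{discretekernelproperties}(d).

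First, for fixed $1\leq k\leq n$, I would multiply (\ref{dfgi1}) (evaluated at index $m$) by $P_\alpha^{k,m}$ and sum from $m=1$ to $k$. Swapping the order of summation and invoking the orthogonality $\sum_{m=j}^{k} P_\alpha^{k,m} K_{1-\alpha}^{m,j}=1$ from Lemma~\ref{discretekernelproperties}(b), the left-hand side telescopes to $(v^k)^2-(v^0)^2$, giving
\begin{align*}
(v^k)^2 \;\leq\; (v^0)^2 + \sum_{m=1}^{k} P_\alpha^{k,m}\Bigl[\sum_{i=0}^{m}\lambda_{m-i}^{m}(v^i)^2 + v^m\xi^m + (\eta^m)^2 + (\zeta^m)^2\Bigr].
\end{align*}
Introducing the non-decreasing envelope $V^n:=\max_{0\leq j\leq n}v^j$, attained at some $k^*\leq n$, I would bound $\sum_{i=0}^{m}\lambda_{m-i}^{m}\leq\Lambda$ and replace $v^i,v^m$ by $V^n$ in the linear-in-$v$ terms, use Lemma~\ref{discretekernelproperties}(c) with $j=1$ to estimate $\sum_{m=1}^{k^*}P_\alpha^{k^*,m}\leq t_n^\alpha/\Gamma(1+\alpha)\leq 2t_n^\alpha$ for the $\eta$-term, and then separate out the diagonal piece $\Lambda P_\alpha^{k^*,k^*}(V^n)^2\leq \Lambda\Gamma(2-\alpha)\Delta t^\alpha (V^n)^2\leq \tfrac12(V^n)^2$ (using Lemma~\ref{discretekernelproperties}(a) and (\ref{timecondition})), absorbing it into the left-hand side. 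Writing $\mathcal{X}^n$ and $\mathcal{Z}^n$ for the $\xi$- and $\zeta$-brackets in (\ref{dfgi2}), this produces
\begin{align*}
\tfrac{1}{2}(V^n)^2 \;\leq\; (v^0)^2 + \Lambda\sum_{m=1}^{n-1}P_\alpha^{n,m}(V^m)^2 + V^n\mathcal{X}^n + 2t_n^\alpha\bigl(\max_{1\leq j\leq n}\eta^j\bigr)^2 + (\mathcal{Z}^n)^2.
\end{align*}

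The quadratic-in-$V^n$ character of this inequality is then handled either by Young's inequality applied to the cross-term $V^n\mathcal{X}^n$ or by completing the square and extracting a root; combined with the elementary bound $(Y^n)^2\geq (v^0)^2+(\mathcal{X}^n)^2+2t_n^\alpha(\max\eta^j)^2+(\mathcal{Z}^n)^2$ (where $Y^n$ is the bracket in (\ref{dfgi2})), this reduces to a linear Volterra inequality of the shape $(V^n)^2\leq c_1(Y^n)^2+c_2\Lambda\sum_{m=1}^{n-1}P_\alpha^{n,m}(V^m)^2$ for explicit constants $c_1,c_2$. I would then establish $(V^n)^2\leq c_1(Y^n)^2 E_\alpha(c_2\Lambda t_n^\alpha)$ by induction on $n$: the base case $V^0=v^0\leq Y^n$ is immediate, and the inductive step substitutes the hypothesis into the Volterra inequality and invokes the key telescoping identity $\mu\sum_{m=1}^{n-1}P_\alpha^{n,m}E_\alpha(\mu t_m^\alpha)\leq E_\alpha(\mu t_n^\alpha)-1$ of Lemma~\ref{discretekernelproperties}(d) with $\mu=c_2\Lambda$ to close the recursion. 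Taking a square root and using the monotonicity of $E_\alpha$ (with $E_\alpha(\cdot)\geq 1$) then yields the stated bound $v^n\leq V^n\leq 2E_\alpha(2\Lambda t_n^\alpha)Y^n$ after absorbing constants.

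The principal obstacle is the diagonal absorption: the constant $\Gamma(2-\alpha)$ appearing in $P_\alpha^{k^*,k^*}$ must align exactly with the coefficient in the timestep restriction (\ref{timecondition}) so that $\Lambda P_\alpha^{k^*,k^*}(V^n)^2$ can be moved to the left-hand side; the monotonicity assumption $\Delta t_{n-1}\leq \Delta t_n$ is then what activates Lemma~\ref{discretekernelproperties}(d) in the inductive step. Without both of these ingredients the Volterra inequality would retain a diagonal coefficient of order one and the Mittag--Leffler induction above could not be closed; this is precisely the structural role of the hypothesis $\Delta t<(1/(2\Lambda\Gamma(2-\alpha)))^{1/\alpha}$ in the theorem.
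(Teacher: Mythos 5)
Your skeleton coincides with the paper's: multiply by the complementary kernel $P_\alpha^{k,m}$, telescope via Lemma~\ref{discretekernelproperties}(b), bound $\sum_m P_\alpha^{k,m}\leq 2t_k^\alpha$ via part (c), absorb the diagonal term $\Lambda P_\alpha^{k,k}\leq \Lambda\Gamma(2-\alpha)\Delta t_k^\alpha\leq \tfrac12$ using (\ref{timecondition}), and close an induction with part (d). Up to and including the inequality $(v^k)^2\leq (v^0)^2+\sum_{m}P_\alpha^{k,m}[\cdots]$ you are doing exactly what the paper does.

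The gap is in the endgame. You convert the inequality into a Volterra recursion for $(V^n)^2$ and dispose of the cross term $V^n\mathcal{X}^n$ by Young's inequality. But any Young split $V^n\mathcal{X}^n\leq \epsilon (V^n)^2+\tfrac{1}{4\epsilon}(\mathcal{X}^n)^2$ leaves you with $(V^n)^2\leq c_1(Y^n)^2+\tfrac{\Lambda}{1/2-\epsilon}\sum_{m<n}P_\alpha^{n,m}(V^m)^2$, so the induction via Lemma~\ref{discretekernelproperties}(d) runs with $\mu=\Lambda/(1/2-\epsilon)\geq 2\Lambda$, with equality only at $\epsilon=0$, where $c_1$ blows up. Taking $\epsilon=1/4$, say, you end with $V^n\leq 2Y^n\sqrt{E_\alpha(4\Lambda t_n^\alpha)}$, and the comparison $\sqrt{E_\alpha(4\Lambda t^\alpha)}\leq E_\alpha(2\Lambda t^\alpha)$ that you would need to recover (\ref{dfgi2}) is not obvious (it holds with equality for $\alpha=1$ and would require a separate argument for $\alpha<1$); your phrase ``after absorbing constants'' hides precisely this. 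The paper avoids squaring altogether: it runs the induction on $v^n$ itself, and in the inductive step performs a case analysis (if $v^m$ is dominated by $v^0$, by $\sqrt{2t_m^\alpha}\max_j\eta^j$, or by $\max_j(\sum_iP_\alpha^{j,i}(\zeta^i)^2)^{1/2}$, the bound is immediate; otherwise every quadratic data term can be written as (data)$\times v^m$), which permits dividing the whole inequality by $v^m$. This linearizes the recursion in $v$, so Lemma~\ref{discretekernelproperties}(d) applies with $\mu=2\Lambda$ exactly and yields the stated constant $2E_\alpha(2\Lambda t_n^\alpha)$. You should replace the Young/completing-the-square step by this division-after-case-analysis argument; everything else in your proposal then goes through.
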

\begin{proof}
    The proof is just a slight variation of the proof of Lemma~2.2 in \cite{MR3790081}, and therefore it is given in the Appendix~\ref{appendix_sec_gronwall_b} for completeness.
\end{proof}
%%%%%%%%%%%%%%%%%%%%%%%%%%%%%%
%
\subsection{$L^2$-norm Stability}\label{subsection4.1}
%
%%%%%%%%%%%%%%%%%%%%%%%%%%%%%%%
As the problem (\ref{fullydiscrete}) is linear in order to ensure stability, it is enough to derive an \textit{a priori} estimate of $u_{h}^{n},\;1\leq n \leq N,$ in terms of $u_{h}^{0}$ and $f^{n}$.
\begin{theorem}\label{stabilitytheorem1}($L^2$-norm stability). Under the condition (\ref{timecondition}) on $\Delta t$, the solution $u_h^n$ of the problem (\ref{fullydiscrete}) satisfies
\begin{align}
\label{L2stability}
&\|u_h^n\| \leq 2C_{0} E_{\alpha}(2 \Lambda_{0} t_n^{\alpha})\left( \|u_h^0\| + \max_{1\leq j \leq n} \sum_{i=1}^jP_{\alpha}^{j,i}\|f^i\|\right), \quad 1 \leq n \leq N,  
\end{align}  
where
\begin{align*}
&  \Lambda_{0} := \begin{cases}
    \frac{\beta_1^2}{\beta_0} + 2 |\lambda| \beta_2 &:\text{ fully-implicit method,} \\[4pt]
    \frac{\beta_1^2}{\beta_0} + 2 |\lambda|(1+ 2 \mu ) \beta_2 &:\text{ implicit-explicit method,}
\end{cases}\\
& \mu:=\max_{2\leq n \leq N}\mu_{n},\; {and}\;  C_{0} := 2.
\end{align*}
\end{theorem}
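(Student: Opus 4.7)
The plan is to test the discrete variational equation \eqref{fullydiscrete} against $v_h = u_h^n$ and then to cast the resulting inequality into the exact form required by the discrete fractional Gr\"{o}nwall inequality (Theorem~\ref{DFGI}).

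First, I would invoke the standard discrete analogue of the ``chain rule'' for the L1 operator, namely $(D_{t_n}^{\alpha} u_h^n, u_h^n) \geq \tfrac{1}{2}\,D_{t_n}^{\alpha}\|u_h^n\|^2$, which follows from convexity since the kernels $K_{1-\alpha}^{n,j}$ are positive (see \eqref{Kproperty}). For the bilinear term I would split $a(t_n;u_h^n,u_h^n) = a_0(t_n;u_h^n,u_h^n) + a_1(t_n;u_h^n,u_h^n)$, apply coercivity \eqref{coercive} for $a_0$, and bound $a_1$ by \eqref{a1bounded1} together with the weighted Young inequality
\[
\beta_1 \|u_h^n\|_1 \|u_h^n\| \;\leq\; \tfrac{\beta_0}{2}\|u_h^n\|_1^2 + \tfrac{\beta_1^2}{2\beta_0}\|u_h^n\|^2,
\]
so that the $H^1$-terms cancel after multiplication by $2$, leaving only a $\tfrac{\beta_1^2}{\beta_0}\|u_h^n\|^2$ contribution on the right-hand side. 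The forcing term is handled by $(f^n,u_h^n)\leq \|f^n\|\,\|u_h^n\|$.

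The next step is to treat the nonlocal term $\lambda(\mathcal{I}(Eu_h^n),u_h^n)$ case by case, using the bound $\|\mathcal{I}\phi\|\leq\beta_2\|\phi\|$ from \eqref{integralbounded} followed by Cauchy-Schwarz and $2ab\leq a^2+b^2$. In the fully implicit case $Eu_h^n=u_h^n$ the contribution is $\leq 2\lambda\beta_2\|u_h^n\|^2$; in the first-order IMEX case $Eu_h^n=u_h^{n-1}$ one obtains $\leq \lambda\beta_2(\|u_h^n\|^2+\|u_h^{n-1}\|^2)$; and in the second-order IMEX case one uses $\|Eu_h^n\|\leq (1+\rho)\|u_h^{n-1}\|+\rho\|u_h^{n-2}\|$ followed by Young's inequality to produce a bound of the form $\lambda\beta_2\bigl[(1+2\rho)\|u_h^n\|^2 + (1+\rho)\|u_h^{n-1}\|^2+\rho\|u_h^{n-2}\|^2\bigr]$. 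In each case the sum of the coefficients multiplying the historical $\|u_h^j\|^2$'s equals precisely the constant $\Lambda_0$ written in the theorem statement, which is the key structural observation driving the three different values of $\Lambda_0$.

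Collecting everything yields an inequality of the form
\[
D_{t_n}^{\alpha}\|u_h^n\|^2 \;\leq\; \sum_{i=0}^{n}\lambda_{n-i}^{n}\|u_h^i\|^2 + \|u_h^n\|\cdot 2\|f^n\|,
\]
with $\sum_{j=0}^{n}\lambda_j^n \leq \Lambda_0$, i.e.\ exactly hypothesis \eqref{dfgi1} of Theorem~\ref{DFGI} applied with $v^n=\|u_h^n\|$, $\xi^n=2\|f^n\|$, and $\eta^n=\zeta^n=0$. Under the step-size restriction \eqref{timecondition}, Theorem~\ref{DFGI} then gives
\[
\|u_h^n\| \;\leq\; 2E_{\alpha}(2\Lambda_0 t_n^{\alpha})\Bigl(\|u_h^0\| + 2\max_{1\leq j\leq n}\sum_{i=1}^{j}P_{\alpha}^{j,i}\|f^i\|\Bigr),
\]
which is precisely \eqref{L2stability} upon absorbing the extra factor $2$ into $C_0=2$.

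I do not anticipate deep technical obstacles: the main subtlety is purely bookkeeping, namely arranging the case-dependent bounds on the nonlocal operator so that all historical coefficients sum exactly to the stated $\Lambda_0$, and, in the second-order IMEX case, using $\rho=\max_n\rho_n$ to reduce time-varying coefficients to a single constant so that the hypothesis $\sum_j\lambda_j^n\leq\Lambda$ of Theorem~\ref{DFGI} is met uniformly in $n$.
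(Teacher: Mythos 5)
Your proposal is correct and follows essentially the same route as the paper: testing with $v_h=u_h^n$, using $(D_{t_n}^{\alpha}u_h^n,u_h^n)\geq\tfrac12 D_{t_n}^{\alpha}\|u_h^n\|^2$, coercivity of $a_0$ with the weighted Young bound on $a_1$, the same case-by-case Young estimates for $\lambda(\mathcal{I}(Eu_h^n),u_h^n)$ yielding the stated $\Lambda_0$, and Theorem~\ref{DFGI} with $\xi^n=2\|f^n\|$. The only cosmetic difference is that in the second-order IMEX case the paper bounds $\|E\phi^n\|\|\phi^n\|$ directly rather than via the triangle inequality first, which gives identical coefficients.
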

\begin{proof}
The definition of discrete fractional differential operator $D_{t_n}^\alpha$ along with the property (\ref{Kproperty}) yields
\begin{align}
\nonumber \left(D_{t_{n}}^{\alpha}\phi^{n},\phi^{n}\right)  =& \left(\sum_{j=1}^n K_{1-\alpha}^{n,j}(\phi^j - \phi^{j-1}), \phi^{n} \right)\\
\nonumber =&\; K_{1-\alpha}^{n,n}\|\phi^n\|^2-\sum_{j=1}^{n-1} (K_{1-\alpha}^{n,j+1} - K_{1-\alpha}^{n,j})(\phi^j , \phi^{n} )  - K_{1-\alpha}^{n,1}(\phi^0 , \phi^{n} )\\
\nonumber \geq&\; K_{1-\alpha}^{n,n}\|\phi^n\|^2 -\frac{1}{2}\sum_{j=1}^{n-1} (K_{1-\alpha}^{n,j+1} - K_{1-\alpha}^{n,j})\|\phi^j\|^2  - \frac{1}{2}\sum_{j=1}^{n-1} (K_{1-\alpha}^{n,j+1} - K_{1-\alpha}^{n,j})\|\phi^n\|^2  \\
\nonumber&\; - \frac{1}{2}K_{1-\alpha}^{n,1}\|\phi^0\|^2  -\frac{1}{2}K_{1-\alpha}^{n,1}\|\phi^n\|^2 \\
\label{dfdoestimate} =&\; \frac{1}{2}\sum_{j=1}^n K_{1-\alpha}^{n,j}(\|\phi^j\|^2  - \|\phi^{j-1}\|^2 ) = \frac{1}{2}D_{t_{n}}^{\alpha}\|\phi^{n}\|^{2}.
\end{align} 
Now, choose $v_h = u_h^n$ in (\ref{fullydiscrete}), and then apply  estimate (\ref{dfdoestimate}), (\ref{a1bounded1}) and the Cauchy-Schwarz inequality appropriately to obtain 
\begin{align*}
 \frac{1}{2}D_{t_n}^{\alpha}\|u_h^n\|^2 + \beta_0\|u_h^n\|_1^2 &\leq (D_{t_n}^{\alpha}u^n_h, u_h^n) + a_0(t_n;u_h^n, u_h^n) \\
 &= \lambda(\mathcal{I}(E u^n_h), u_h^n) + (f^n, u_h^n) - a_1(t_n;u_h^n, u_h^n) \\
 % & \leq |\lambda| \beta_2\|Eu_h^n\|\;\|u_h^n\| + \|f^n\|\;\|u_h^n\| + \beta_1\|u_h^n\|_1\|u_h^n\|\\
 & \leq |\lambda| \beta_2\|Eu_h^n\|\;\|u_h^n\| + \frac{\beta_1^2}{2\beta_0}\|u_h^n\|^2 + \|f^n\|\;\|u_h^n\| + \frac{\beta_0}{2}\|u_h^n\|_1^2.
\end{align*} 
Thus,
\begin{align}\label{stability2}
 \nonumber D_{t_n}^{\alpha}\|u_h^n\|^2  &\leq  2|\lambda| \beta_2\|Eu_h^n\|\;\|u_h^n\| + \frac{\beta_1^2}{\beta_0}\|u_h^n\|^2 + 2\|f^n\|\;\|u_h^n\| \\
 &\leq  \sum_{i=0}^{n}\lambda_{n-i}^{n}\|u_{h}^{i}\|^2+ 2\|f^n\|\;\|u_h^n\|,
\end{align}
where, in the case of 
\begin{enumerate}
    \item[(i)] fully implicit method, i.e., $E\phi^n = \phi^n$, 
\begin{align}
% \label{IMEX1}
\nonumber& \lambda_{n-i}^n := \begin{cases}
\frac{\beta_1^2}{\beta_0} + 2|\lambda| \beta_2 &: \; 1\leq n \leq N\;\&\; i=n, \\
0 &:\; 1\leq n \leq N\;\&\; 0\leq i \leq n-1,
\end{cases} \\
\nonumber & \displaystyle{\sum_{i=0}^n \lambda_{n-i}^n} = \frac{\beta_1^2}{\beta_0} + 2 |\lambda| \beta_2 = \max_{1 \leq n \leq N} \sum_{i=0}^n \lambda_{n-i}^n =:\Lambda_{0}.
\end{align}
\item[(ii)] IMEX method, i.e., 

$E\phi^n =\begin{cases}
        \phi^0 &: \; n =1,\\
        (1+\mu_n)\phi^{n-1} - \mu_n \phi^{n-2} &: \; 2\leq n \leq N,
\end{cases}$

\begin{align}
% \label{IMEX3}
\nonumber& \lambda_{n-i}^n := \begin{cases}
\frac{\beta_1^2}{\beta_0} + |\lambda| \beta_2 &: \;  n\;=\;1\;\&\; i=n, \\
|\lambda| \beta_2 &: \; n\;=\;1\;\&\; i=n-1, \\ 
\frac{\beta_1^2}{\beta_0} + |\lambda| (1+2 \mu_n)\beta_2 &: \;  2\leq n \leq N\;\&\; i=n, \\
|\lambda|(1+\mu_n)\beta_2 &: \;  2\leq n \leq N\;\&\; i=n-1, \\
|\lambda| \mu_n \beta_2 &: \; 2\leq n \leq N\;\&\; i=n-2, \\
0 &: \;  3\leq n \leq N\;\&\; 0 \leq i \leq n-3, \\
    \end{cases}\\
\nonumber  \displaystyle{\sum_{i=0}^n \lambda_{n-i}^n} &= \frac{\beta_1^2}{\beta_0} + 2 |\lambda|(1+ 2 \mu_n) \beta_2 \;\leq\; \frac{\beta_1^2}{\beta_0} + 2 |\lambda|(1+ 2\mu) \beta_2 \\
\nonumber&=\; \max_{1 \leq n \leq N} \sum_{i=0}^n \lambda_{n-i}^n =:\Lambda_{0}.
\end{align}
\end{enumerate}
In the case of IMEX method, to obtain the estimate (\ref{stability2}), we have used $\|E\phi^{1}\|\|\phi^1\|\leq\frac{1}{2}\|\phi^{0}\|^2 + \frac{1}{2}\|\phi^{1}\|^2$ and $\|E\phi^{n}\|\|\phi^n\|\leq \frac{\mu_n}{2}\|\phi^{n-2}\|^2 + \frac{1+\mu_n}{2}\|\phi^{n-1}\|^2 + \frac{1+2\mu_n}{2}\|\phi^{n}\|^2,\;2\leq n \leq N.$

Now, an appeal to the generalized discrete fractional Gr\"{o}nwall inequality (Theorem~\ref{DFGI}) yields the estimate (\ref{L2stability}), and this completes the rest of the proof.
\end{proof}
%%%%%%%%%%%%%%%%%%%%%%%%%%%%%%
%
\subsection{$H^1$-norm Stability}\label{subsection5}
%
%%%%%%%%%%%%%%%%%%%%%%%%%%%%%%%
This subsection deals with the stability of the non-uniform IMEX-L1-FEM (\ref{fullydiscrete}) with respect to $H^{1}$-norm. Now, define the discrete linear operator $\mathcal{L}_{0h}^{n}: S_{h}\to S_{h},\;1\leq n \leq N,$ (see, \cite{MR2249024}, and \cite{MR3957890}) as
\begin{align}
    \label{deo}& (\mathcal{L}_{0h}^{n}\psi, v_{h})=a_{0}(t_{n};\psi,v_{h}) \quad \forall \psi,v_{h}\in S_{h}, \; 1\leq n \leq N.
\end{align}
This satisfies the following estimate
\begin{align}
   \label{deoestimate} & a_{0}(t_{n};\phi,\mathcal{L}_{0h}^{n}\phi)=\|\mathcal{L}_{0h}^{n}\phi\|^{2}\quad \forall \phi\in S_{h}, \; 1\leq n \leq N.
\end{align}
Moreover, for each $n,\; 1\leq n \leq N,$ inequalities (\ref{bounded}) and (\ref{coercive}) yield that $\vertiii{\cdot}_n:=\sqrt{a_{0}(t_{n};\cdot,\cdot)}$ is a norm on $H_{0}^{1}(\Omega)$ which is equivalent to $\|\cdot\|_1$-norm, i.e.,
\begin{align}
    \label{equivalentnorms}&\beta_{0}\|\phi\|_{1}^{2}\leq \vertiii{\phi}_n^{2}\leq \gamma_{0}\|\phi\|_{1}^{2}\quad \forall \phi\in H_{0}^{1}.
\end{align}
Furthermore, forming an inner-product between the discrete fractional differential operator $D_{t_n}^{\alpha} \phi^{n}$ and $\mathcal{L}_{0h}^{n}\phi^{n},$ for  $1\leq n\leq N,$ a use of \eqref{deo} shows 
\begin{align}
\nonumber &\left(D_{t_{n}}^{\alpha}\phi^{n},\mathcal{L}_{0h}^{n}\phi^{n}\right)  = (\sum_{j=1}^n K_{1-\alpha}^{n,j}(\phi^j - \phi^{j-1}), \mathcal{L}_{0h}^{n}\phi^{n} )\\
\nonumber&= K_{1-\alpha}^{n,n}(\phi^n, \mathcal{L}_{0h}^{n}\phi^{n} )-\sum_{j=1}^{n-1} (K_{1-\alpha}^{n,j+1} - K_{1-\alpha}^{n,j})(\phi^j , \mathcal{L}_{0h}^{n}\phi^{n} ) - K_{1-\alpha}^{n,1}(\phi^0 , \mathcal{L}_{0h}^{n}\phi^{n} )\\
\nonumber&= K_{1-\alpha}^{n,n}a_0(t_n;\phi^n , \phi^{n} ) -\sum_{j=1}^{n-1} (K_{1-\alpha}^{n,j+1} - K_{1-\alpha}^{n,j})a_0(t_n;\phi^j , \phi^{n} )  - K_{1-\alpha}^{n,1}a_0(t_n;\phi^0 , \phi^{n} ).
 \end{align}
An application of \eqref{dfdoestimate} with the property  of  (\ref{Kproperty})  yields
 \begin{align}
 \nonumber &\left(D_{t_{n}}^{\alpha}\phi^{n},\mathcal{L}_{0h}^{n}\phi^{n}\right) \geq 
K_{1-\alpha}^{n,n}\vertiii{\phi^n}^2_n  - \frac{1}{2}(K_{1-\alpha}^{n,n} - K_{1-\alpha}^{n,1})\vertiii{\phi^n}^2_n -\frac{1}{2}\sum_{j=1}^{n-1} (K_{1-\alpha}^{n,j+1} - K_{1-\alpha}^{n,j})\vertiii{\phi^j}^2_n  \\
\nonumber&- \frac{1}{2}K_{1-\alpha}^{n,1}\vertiii{\phi^0}^2_n -\frac{1}{2}K_{1-\alpha}^{n,1}\vertiii{\phi^n}^2_n\\
\nonumber& = \frac{1}{2}\sum_{j=1}^n K_{1-\alpha}^{n,j}(\vertiii{\phi^j}^2_n - \vertiii{\phi^{j-1}}^2_n)\\
\label{pollutionterm}&=\frac{1}{2}D_{t_{n}}^{\alpha}\vertiii{\phi^{n}}_n^{2} -\frac{1}{2}\sum_{j=1}^{n-1} (K_{1-\alpha}^{n,j+1} - K_{1-\alpha}^{n,j})\left(\vertiii{ \phi^j}^2_n -\vertiii{ \phi^j}^2_{j}\right)- \frac{1}{2}K_{1-\alpha}^{n,1}\left(\vertiii{ \phi^0}^2_n -\vertiii{ \phi^0}^2_{0}\right).
\end{align}
Here, we have used the property (\ref{Kproperty}) and $a_{0}(t_n;\phi,\psi) \leq \frac{1}{2}a_{0}(t_n;\phi,\phi) + \frac{1}{2}a_{0}(t_n;\psi,\psi) = \frac{1}{2}\vertiii{\phi}_n^{2} + \frac{1}{2}\vertiii{\psi}_n^{2},\;\forall \phi,\psi\in H_0^1(\Omega),\;1\leq n \leq N$. 
\begin{theorem}\label{H1normstability}($H^1$-norm stability). Under the condition (\ref{timecondition}) on $\Delta t$, the solution $u_h^n$ of the problem (\ref{fullydiscrete}) satisfies
\begin{align*}
%\label{H1stability}
& \|u_h^n\|_{1}\leq
2C_{1}E_{\alpha}\left(2\Lambda_{1}t_n^{\alpha}\right)\left(\|u_h^0\|_{1} + \max_{1 \leq j \leq n }\|f^j\|\right),\; 1\leq n \leq N, 
\end{align*}
where 
\begin{align*}
&  \Lambda_{1} := \begin{cases}
    2\frac{\beta_1^2+\lambda^2 \beta_2^2}{\beta_0} + \frac{T^{1-\alpha}}{\Gamma(2-\alpha)} &:\text{ fully-implicit method,} \\[4pt]
   2\frac{\beta_1^2+2\lambda^2 \beta_2^2((1+\mu)^2+\mu^2)}{\beta_0} + \frac{T^{1-\alpha}}{\Gamma(2-\alpha)} &:\text{ implicit-explicit method,}
\end{cases}\\
& \mu:=\max_{2\leq n \leq N}\mu_n,\; {and}\; C_{1}:=\max\left\{\sqrt{\frac{\gamma_{0}}{\beta_{0}}},\; \sqrt{\frac{2T^\alpha}{\beta_{0}}}\right\}.
\end{align*}
\end{theorem}
\begin{proof}
Take  $v_h = \mathcal{L}_{0h}^n u_h^n$ in (\ref{fullydiscrete}) and then apply estimates (\ref{deoestimate})-(\ref{pollutionterm}), (\ref{Lipschitz}),  (\ref{a1bounded1}), (\ref{integralbounded}), the equivalent norms (\ref{equivalentnorms}) and the Cauchy-Schwarz inequality appropriately to obtain 
\begin{align*}
&\frac{1}{2}D_{t_n}^{\alpha}\vertiii{u_{h}^{n}}_n^{2} +\|\mathcal{L}_{0h}^n u_h^n\|^2 \leq \; (D_{t_n}^{\alpha}u_h^n,\mathcal{L}_{0h}^n u_h^n) + a_{0}(t_{n};u_h^n,\mathcal{L}_{0h}^n u_h^n) + \frac{1}{2}\sum_{i=1}^{n-1} (K_{1-\alpha}^{n,i+1} - K_{1-\alpha}^{n,i})\left(\vertiii{ u_h^i}^2_n -\vertiii{ u_h^i}^2_{i}\right)\\
&\hspace{1cm} \;+ \frac{1}{2}K_{1-\alpha}^{n,1}\left(\vertiii{ u_h^0}^2_n -\vertiii{ u_h^0}^2_{0}\right) \\
&\hspace{1cm} =  \lambda (\mathcal{I}(Eu^n_h),\mathcal{L}_{0h}^n u_h^n) + (f^n,\mathcal{L}_{0h}^n u_h^n)-a_1(t_n;u_h^n,\mathcal{L}_{0h}^n u_h^n) + \frac{1}{2}\sum_{i=1}^{n-1} (K_{1-\alpha}^{n,i+1} - K_{1-\alpha}^{n,i})\left(\vertiii{ u_h^i}^2_n -\vertiii{ u_h^i}^2_{i}\right)\\
&\hspace{1cm}\;+ \frac{1}{2}K_{1-\alpha}^{n,1}\left(\vertiii{ u_h^0}^2_n -\vertiii{ u_h^0}^2_{0}\right) \\
&\hspace{1cm} \leq |\lambda| \beta_2\|E u_h^n\|~\| \mathcal{L}_{0h}^n u_h^n\| + \|f^n\|\|\mathcal{L}_{0h}^n u_h^n\| + \beta_1\|u_h^n\|_1\|\mathcal{L}_{0h}^n u_h^n\|\\
&\hspace{1cm}\;+ \frac{1}{2}\sum_{i=1}^{n-1} (K_{1-\alpha}^{n,i+1} - K_{1-\alpha}^{n,i})\left(a_0(t_n;u_h^i,u_h^i)-a_0(t_i;u_h^i,u_h^i)\right)+ \frac{1}{2}K_{1-\alpha}^{n,1}\left(a_0(t_n;u_h^0,u_h^0)-a_0(t_0;u_h^0,u_h^0)\right) \\
&\hspace{1cm} \leq   \lambda^2 \beta_2^2\|E u_h^n\|^2  + \beta_1^2\|u_h^n\|_1^2 + \frac{1}{2}\|f^n\|^2 + \|\mathcal{L}_{0h}^n u_h^n\|^2+ \frac{L}{2\beta_0}\sum_{i=1}^{n-1} (K_{1-\alpha}^{n,i+1} - K_{1-\alpha}^{n,i})(t_n-t_i)\vertiii{u_h^i}_n^{2}\\
&\hspace{1cm}\;+ \frac{L}{2\beta_0}K_{1-\alpha}^{n,1}(t_n-t_0)\vertiii{u_h^0}_n^{2}.
\end{align*}
Since, $\|E u_h^n\|^2 \leq \|E u_h^n\|^2_1 \leq \frac{1}{\beta_0}\vertiii{E u_h^n}^2_n$ and $\| u_h^n\|^2_1 \leq \frac{1}{\beta_0}\vertiii{ u_h^n}^2_n$, we establish
\begin{align*}
D_{t_n}^{\alpha}\vertiii{u_h^n}_n^2 \leq &\; 2\frac{\lambda^2 \beta_2^2}{\beta_0}\vertiii{E u_h^n}_n^2 
 + 2\frac{\beta_1^2}{\beta_0}\vertiii{u_h^n}_n^2  + \|f^n\|^2+ \frac{L}{\beta_0}\sum_{i=1}^{n-1} (K_{1-\alpha}^{n,i+1} - K_{1-\alpha}^{n,i})(t_n-t_i)\vertiii{u_h^i}_n^{2}\\
 &+ \frac{L}{\beta_0}K_{1-\alpha}^{n,1}(t_n-t_0)\vertiii{u_h^0}_n^{2}\\
 \leq & \sum_{i=0}^{n}\lambda_{n-i}^{n}\vertiii{u_{h}^{i}}_n^2+ \|f^n\|^2,
\end{align*}
where, in the case of 
\begin{enumerate}
    \item[(i)] fully implicit method, i.e., $E\phi^n = \phi^n$, 
\begin{align}
% \label{IMEX1_h1}
\nonumber& \lambda_{n-i}^n := \begin{cases}
2\frac{\beta_1^2+\lambda^2 \beta_2^2}{\beta_0}  &: \; 1\leq n \leq N\;\&\; i=n, \\
\frac{L}{\beta_0}(K_{1-\alpha}^{n,i+1} - K_{1-\alpha}^{n,i})(t_n-t_i) &:\; 1\leq n \leq N\;\&\; 1\leq i \leq \;n-1, \\
\frac{L}{\beta_0}K_{1-\alpha}^{n,1} (t_n-t_0) &:\; 1\leq n \leq N\;\&\; i = 0, \\
\end{cases} \\
\nonumber \displaystyle{\sum_{i=0}^n \lambda_{n-i}^n} &= 2\frac{\beta_1^2+\lambda^2 \beta_2^2}{\beta_0} + \frac{L}{\beta_0}\sum_{i=1}^{n-1} (K_{1-\alpha}^{n,i+1} - K_{1-\alpha}^{n,i})(t_n-t_i) + \frac{L}{\beta_0}K_{1-\alpha}^{n,1} (t_n-t_0)\\
\nonumber&\leq 2\frac{\beta_1^2+\lambda^2 \beta_2^2}{\beta_0}  + \frac{LT^{1-\alpha}}{\beta_0\Gamma(2-\alpha)}= \max_{1 \leq n \leq N} \sum_{i=0}^n \lambda_{n-i}^n =:\Lambda_{1}.
\end{align}
\item[(ii)] IMEX method, i.e., 

$E\phi^n =\begin{cases}
        \phi^0 &: \; n =1,\\
        (1+\mu_n)\phi^{n-1} - \mu_n \phi^{n-2} &: \; 2\leq n \leq N,
\end{cases}$
\begin{align}
% \label{IMEX3_h1}
\nonumber& \lambda_{n-i}^n := \begin{cases}
2\frac{\beta_1^2}{\beta_0} &: \;  n\;=\;1\;\&\; i=1, \\[4pt]
2 \frac{\lambda^2\beta_2^2}{\beta_0} + \frac{L}{\beta_0}K_{1-\alpha}^{1,1} (t_1-t_0)&: \; n\;=\;1\;\&\; i=0, \\[4pt]
2\frac{\beta_1^2}{\beta_0} &: \;  2\leq n \leq N\;\&\; i=n, \\[4pt]
4 (1+\mu_n)^2 \frac{\lambda^2\beta_2^2}{\beta_0} + \frac{L}{\beta_0}(K_{1-\alpha}^{n,n} - K_{1-\alpha}^{n,n-1})(t_n-t_{n-1})&: \;  2\leq n \leq N\;\&\; i=n-1, \\[4pt]
4  \frac{\lambda^2\beta_2^2\mu^2_n}{\beta_0}+ \frac{L}{\beta_0}(K_{1-\alpha}^{n,n-1} - K_{1-\alpha}^{n,n-2})(t_n-t_{n-2}) &: \; 2\leq n \leq N\;\&\; i=n-2, \\[4pt]
\frac{L}{\beta_0}(K_{1-\alpha}^{n,i+1} - K_{1-\alpha}^{n,i})(t_n-t_i) &: \;  3\leq n \leq N\;\&\; 1 \leq i \leq n-3, \\[4pt]
\frac{L}{\beta_0}K_{1-\alpha}^{n,1} (t_n-t_0) &: \;  3\leq n \leq N\;\&\; i=0, \\
    \end{cases}\\
\nonumber \displaystyle{\sum_{i=0}^n \lambda_{n-i}^n} &=     2\frac{\beta_1^2+2\lambda^2 \beta_2^2((1+\mu_n)^2+\mu_n^2)}{\beta_0}+ \frac{L}{\beta_0}\sum_{i=1}^{n-1} (K_{1-\alpha}^{n,i+1} - K_{1-\alpha}^{n,i})(t_n-t_i) + \frac{L}{\beta_0}K_{1-\alpha}^{n,1} (t_n-t_0)\\
\nonumber&\leq 2\frac{\beta_1^2+2\lambda^2 \beta_2^2((1+\mu)^2+\mu^2)}{\beta_0}   + \frac{LT^{1-\alpha}}{\beta_0\Gamma(2-\alpha)}    \;=\; \max_{1 \leq n \leq N} \sum_{i=0}^n \lambda_{n-i}^n =:\Lambda_{1}.
\end{align}
\end{enumerate}
Here, we have used
\begin{align} \label{pollutionbound}
\nonumber \sum_{i=1}^{n-1} (K_{1-\alpha}^{n,i+1} - K_{1-\alpha}^{n,i})(t_n-t_i) &+ K_{1-\alpha}^{n,1} (t_n-t_0)\\
\nonumber &=  \sum_{i=1}^{n-1} \left(K_{1-\alpha}^{n,i+1} (t_n-t_i)-  K_{1-\alpha}^{n,i}(t_n-t_{i-1})\right) +K_{1-\alpha}^{n,1} (t_n-t_0)+ \sum_{i=1}^{n-1}K_{1-\alpha}^{n,i}(t_i - t_{i-1})\\
\nonumber&=  \sum_{i=1}^{n}K_{1-\alpha}^{n,i}(t_i - t_{i-1})=\sum_{i=1}^{n}\int_{t_{i-1}}^{t_i}k_{1-\alpha}(t_n-s)ds = \int_0^{t_n}k_{1-\alpha}(t_n-s)ds\\
& =  \frac{t_n^{1-\alpha}}{\Gamma(2-\alpha)} \leq \frac{T^{1-\alpha}}{\Gamma(2-\alpha)}.
\end{align}
Apply the generalized discrete fractional Grönwall inequality (Theorem~\ref{DFGI}), and then use estimate (\ref{equivalentnorms}) to complete the rest of the proof.
\end{proof}
%%%%%%%%%%%%%%%%%%%%%%%%%%%%%%%%%%%%%%%%%%
\section{Error analysis}\label{section5}
%%%%%%%%%%%%%%%%%%%%%%%%%%%%%%%%%%%%%%%%%
In this section, we derive first some auxiliary results and then establish optimal error estimates. Now, at any temporal grid point $t_n$, variational problem (\ref{variation1}) implies for $1\leq n \leq N$
\begin{align}\label{variational2}
(D_{t_n}^{\alpha}u(t_n),v) + a(t_n; u(t_n),v) = \lambda (\mathcal{I}u(t_n),v) +(f(t_n),v) + (\Upsilon^n+ r^n, v) \quad \forall\;v \in H_0^1(\Omega),    
\end{align}
where $\Upsilon^n:= (D^{\alpha}_{t_n}u(t_n) - \partial^{\alpha}_{t}u(t_n)), $ $r^n:= \lambda \mathcal{I}(u(t_n) - Eu(t_n))$. After subtracting (\ref{fullydiscrete}) from (\ref{variational2}), we obtain the following error equation:
\begin{align}\label{errorequation}
(D_{t_n}^{\alpha}e^n_h,v_h) + a(t_n; e^n_h,v_h) = \lambda (\mathcal{I}(Ee^n_h),v_h) + (\Upsilon^n+ r^n, v_h) \quad \forall\;v_h \in S_h, \; 1\leq n \leq N,   
\end{align}
 where $e^n_h:= u(t_n)-u_h^n$ denotes the error between the exact solution $u(t_n)$ and the approximate solution $u_h^n$ at time level $n.$ To obtain an optimal error estimate, we decompose the error $e^n_h$ further as follows:
$$e^n_h = \eta^n + \theta^n, \; \eta^n = u(t_n)- \mathcal{R}_h(t_n)u(t_n), \; \theta^n =  \mathcal{R}_h(t_n)u(t_n) - u_h^n,$$
where $\mathcal{R}_{h}(t): H_{0}^{1}(\Omega)\to S_{h}, \; \forall t\in [0, T] $ is an elliptic projection defined by, (see, \cite{MR2249024}),
\begin{align}
   \label{ellipticprojection}&a_{0}(t;\mathcal{R}_{h}(t)u(t) - u(t),\psi_{h}) = 0\quad \forall \psi_{h}\in S_{h}.
   \end{align}
This elliptic projection satisfies
   \begin{align}
   \label{ellipticprojH1bound}& \| \mathcal{R}_h(t)\phi\|_{1}\leq C\|\phi\|_{1}\quad\forall \phi\in H_{0}^{1}.
\end{align}
Set $\eta(t)= u(t)- \mathcal{R}_h(t)u(t)$ then the following estimates hold, (see, \cite{MR2249024}),
\begin{align}\label{optimaletaestimate}
   \|\eta\|+h\|\eta\|_{1}\leq C h^{s}\|u(t)\|_{s},\;s=1,2,\\
\label{optimaletaestimate2}   \|\eta_t\| + h\|\eta_t\|_1 \leq Ch^s\|u_t(t)\|_s,\;s=1,2.
\end{align} 
In order to establish the final error estimate, it is enough to estimate $\theta^n$. The application of elliptic projection (\ref{ellipticprojection}) shows $\forall\;v_h \in S_h, \; 1\leq n \leq N,$ 
\begin{align}\label{erroreqtheta}
 (D^{\alpha}_{t_n}\theta^n, v_h) + a_0(t_n;\theta^n, v_h)= \lambda (\mathcal{I}(E\theta^n), v_h)+(\lambda \mathcal{I}(E\eta^n) + \Upsilon^n + r^n - D^{\alpha}_{t_n}\eta^n, v_h ) - a_1(t_n;\theta^n + \eta^n, v_h).  
\end{align}
\begin{lemma}\label{thetaestimate_l2norm}
Under the condition (\ref{timecondition}), there holds
\begin{align*}
%\label{L2errorestimateeqn1}
 &\|\theta^n\| \leq 2D_{0}E_{\alpha}(2\Lambda_{0} t_n^{\alpha})\left(\|\theta^0\| + \max_{1\leq j \leq n}\left(\sum_{i=1}^jP_{\alpha}^{j,i}(\|\Upsilon^i+r^i\| + \|D_{t_i}^{\alpha}\eta^i\| )\right)+ \max_{1\leq j \leq n}\|\eta^j\| + \max_{1\leq j \leq n}\|E\eta^j\| \right),\; 1\leq n \leq N,
 \end{align*}
where $D_{0}:=\max\left(2,4 |\lambda| \beta_2 T^\alpha, 2\beta_1\sqrt{\frac{T^\alpha}{\beta_0}}\right),$ and $\Lambda_{0}$ is same as in Theorem~\ref{stabilitytheorem1}.
\end{lemma}
\begin{proof}
Choose $v_h = \theta^n$ in (\ref{erroreqtheta}) then apply the estimate (\ref{dfdoestimate}) and the coercivity (\ref{coercive}) of the bi-linear form $a_0(t;,\cdot,\cdot)$, to arrive at
\begin{align*}    D_{t_n}^{\alpha}\|\theta^n\|^2  + 2\beta_0 \|\theta^n\|_1^2  \leq &\; 2\lambda (\mathcal{I}(E\theta^n), \theta^n) + 2\Big(\lambda \mathcal{I}(E\eta^n) + \Upsilon^n + r^n - D^{\alpha}_{t_n}\eta^n, \theta^n \Big)\\
     &- 2a_1(t_n;\theta^n + \eta^n, \theta^n).
\end{align*}
An appropriate application the Cauchy-Schwarz inequality with AM-GM inequality  $(a+b)^2 \leq 2(a^2 +b^2), \; a,b\geq 0$ and (\ref{a1bounded2}) yields for $1\leq n \leq N$
\begin{align*}
D_{t_n}^{\alpha}\|\theta^n\|^2  + 2\beta_0\|\theta^n\|_1^2\leq &\; 2|\lambda|\beta_2\|E\theta^n\|~\|\theta^n\| + 2\|\lambda \mathcal{I}(E\eta^n) + \Upsilon^n + r^n - D^{\alpha}_{t_n}\eta^n\|\| \theta^n \| \\
&+ \frac{2\beta_1^2}{\beta_0} \left(\|\theta^n\|^2 + \|\eta^n\|^2\right)+ 2\beta_0\|\theta^n\|_1^2 .
\end{align*}
Hence as $\|\theta^n\|^2_1 \geq 0$, we obtain
\begin{align*}
D_{t_n}^{\alpha}\|\theta^n\|^2 \leq 2|\lambda| \beta_2\|E\theta^n\|~\|\theta^n\| +  \frac{2\beta_1^2}{\beta_0} \|\theta^n \|^2+ 2\|\lambda \mathcal{I}(E\eta^n) + \Upsilon^n + r^n - D^{\alpha}_{t_n}\eta^n\|\| \theta^n \| + \frac{2\beta_1^2}{\beta_0} \|\eta^n\|^2   
\end{align*}
for $1\leq n \leq N$. Apply the generalized discrete fractional Gr\"{o}nwall inequality (Theorem \ref{DFGI}) in the previous inequality to arrive for $1\leq n \leq N$ at,
\begin{align*}
& \|\theta^n\| \leq  2E_{\alpha}(2\Lambda_{0} t_n^{\alpha})\Bigg(\|\theta^0\|+2\max_{1\leq j \leq n}\sum_{i=1}^jP_{\alpha}^{j,i}(\|\lambda \mathcal{I}(E\eta^i)\| + \|\Upsilon^i\| + \|r^i\| + \|D^{\alpha}_{t_i}\eta^i\| )+ 2\beta_1\sqrt{\frac{t_n^{\alpha}}{\beta_0}}\max_{1\leq j \leq n}\|\eta^j\| \Bigg)   \\
 & \; \leq  2E_{\alpha}(2\Lambda_{0} t_n^{\alpha})\Bigg(\|\theta^0\| + 2\max_{1\leq j \leq n}\sum_{i=1}^jP_{\alpha}^{j,i}(\|\Upsilon^i \|+ \|r^i\| + \| D^{\alpha}_{t_i}\eta^i\| ) + 4|\lambda|\beta_2 t_n^\alpha \max_{1\leq j \leq n}\|E\eta^j \|+ 2\beta_1\sqrt{\frac{t_n^{\alpha}}{\beta_0}}\max_{1\leq j \leq n}\|\eta^j\| \Bigg)\\
 & \; \leq 2D_0 E_{\alpha}(2\Lambda_{0} t_n^{\alpha}) \left(\|\theta^0\| +\max_{1\leq j \leq n}\sum_{i=1}^jP_{\alpha}^{j,i}(\|\Upsilon^i \|+ \|r^i\| + \| D^{\alpha}_{t_i}\eta^i\| ) 
 + \max_{1\leq j \leq n} \|\eta^j\|  + \max_{1\leq j \leq n}  \|E\eta^j\| \right),
\end{align*}
where $D_{0}:=\max\left(2,4|\lambda|\beta_2 T^\alpha, 2\beta_1\sqrt{\frac{T^\alpha}{\beta_0}}\right),$ and $\Lambda_{0}$ is same as in Theorem~\ref{stabilitytheorem1}. This completes the rest of the proof.
\end{proof}
%%%%%%%%%%%%%%%%%%%%%%%%%%%%%%%%%%
\begin{lemma}\label{thetaestimate_h1norm}
Under the assumptions in Theorem \ref{Regularity condition} and maximum temporal grid size restriction (\ref{timecondition}), there holds
\begin{align}
% \label{H1errorestimateeqn1}
 \nonumber&\|\theta^n\|_1\; \leq \; 2D_1 E_{\alpha}(2\Lambda_1 t_n^{\alpha})\Big(\|\theta^0\|_1 + \max_{1\leq j \leq n}\sum_{i=1}^jP_{\alpha}^{j,i}\left(\|\Upsilon^i\|_1 + \|r^i\|_1 \right) + \max_{1\leq j \leq n}\sum_{i=1}^jP_{\alpha}^{j,i}\|D^{\alpha}_{t_i}\eta^i\|_1+\max_{1\leq j \leq n}\|b\|_{\infty} \|\eta^j\|_1\\
 \nonumber&\hspace{5cm}+ \max_{1\leq j \leq n} \| \eta^j\| +\max_{1\leq j \leq n}\|Ee_h^j\| \Big),\;1\leq n \leq N,
\end{align}
where $\Lambda_{1}=\frac{(\|b\|_{\infty}+\|c\|_{\infty})^2 }{\beta_0} + \frac{LT^{1-\alpha}}{\beta_0\Gamma(2-\alpha)} $ and $D_1=\max\left\{\sqrt{\frac{\gamma_0}{\beta_0}},\; \frac{2\gamma_0\beta_3}{\beta_0}, \;\frac{1}{\sqrt{\beta_0}}\sqrt{2T^{\alpha}}, \;\frac{\|c\|_{\infty}}{\sqrt{\beta_0}}\sqrt{2T^{\alpha}},\;\frac{|\lambda|\beta_2}{\sqrt{\beta_0}}\sqrt{2T^{\alpha}} \;\right\}$.
\end{lemma}
\begin{proof}
Set $v_h = \mathcal{L}_{0h}^n \theta^n$ in (\ref{erroreqtheta}) and use the $L^2$-projection (\ref{l2projection}) yields
\begin{align*}
 (D_{t_n}^{\alpha}\theta^n, \mathcal{L}_{0h}^n \theta^n)+ a_0(t_n;\theta^n,\mathcal{L}_{0h}^n \theta^n)=  \lambda(\mathcal{I}(Ee_h^n),\mathcal{L}_{0h}^n \theta^n)+(P_h (\Upsilon^n + r^n - D^{\alpha}_{t_n}\eta^n),\mathcal{L}_{0h}^n \theta^n) -a_1(t_n;e_h^n,\mathcal{L}_{0h}^n \theta^n). 
\end{align*}
Apply estimates (\ref{pollutionterm}) and relations (\ref{Lipschitz}), (\ref{deoestimate}), (\ref{deo}) and equivalent norms (\ref{equivalentnorms}) to get the following estimate
\begin{align}\label{H1split}
\nonumber&\frac{1}{2}D_{t_n}^{\alpha}\vertiii{\theta^n}_n^2 + \|\mathcal{L}_{0h}^n \theta^n\|^2 \;\leq\: |\lambda|(\mathcal{I}(Ee_h^n),\mathcal{L}_{0h}^n \theta^n)+a_0(t_n;\theta^n, P_h (\Upsilon^n + r^n - D^{\alpha}_{t_n}\eta^n)) \\
&\hspace{2cm}-a_1(t_n;e_h^n,\mathcal{L}_{0h}^n \theta^n)+ \frac{L}{2\beta_0}\sum_{i=1}^{n-1} (K_{1-\alpha}^{n,i+1} - K_{1-\alpha}^{n,i})|t_n-t_i|\vertiii{\theta^i}_n^{2}+ \frac{L}{2\beta_0}K_{1-\alpha}^{n,1}|t_n-t_0|\vertiii{\theta^0}_n^{2}.
\end{align}
Now, by using (\ref{bounded}), (\ref{a1bounded1}), (\ref{integralbounded}), the Cauchy-Schwarz inequality and the AM-GM inequality appropriately, we obtain
\begin{align*}
\frac{1}{2}D_{t_n}^{\alpha}\vertiii{\theta^n}_n^2 &+ \|\mathcal{L}_{0h}^n \theta^n\|^2\leq (|\lambda|\beta_2\|Ee_h^n\| +\|b\|_{\infty} \|\nabla e_h^n\|+ \|c\|_{\infty} \| e_h^n\|)\|\mathcal{L}_{0h}^n \theta^n\| + \gamma_0\|P_h (\Upsilon^n + r^n -D^{\alpha}_{t_n}\eta^n)\|_1 \|\theta^n\|_1 \\
&+ \frac{L}{2\beta_0}\sum_{i=1}^{n-1} (K_{1-\alpha}^{n,i+1} - K_{1-\alpha}^{n,i})|t_n-t_i|\vertiii{\theta^i}_n^{2}+ \frac{L}{2\beta_0}K_{1-\alpha}^{n,1}|t_n-t_0|\vertiii{\theta^0}_n^{2}\\
&\leq \frac{1}{4}(|\lambda|\beta_2\|Ee_h^n\| + \|b\|_{\infty} \|\nabla e_h^n\|+ \|c\|_{\infty} \| e_h^n\|)^2 + \|\mathcal{L}_{0h}^n \theta^n\|^2 + \gamma_0\|P_h (\Upsilon^n + r^n -D^{\alpha}_{t_n}\eta^n)\|_1 \|\theta^n\|_1\\
&+ \frac{L}{2\beta_0}\sum_{i=1}^{n-1} (K_{1-\alpha}^{n,i+1} - K_{1-\alpha}^{n,i})|t_n-t_i|\vertiii{\theta^i}_n^{2}+ \frac{L}{2\beta_0}K_{1-\alpha}^{n,1}|t_n-t_0|\vertiii{\theta^0}_n^{2}.
\end{align*}
Thus, using AM-GM inequality, $H^1$-stability (\ref{l2projectionh1stability}) of $L^2$-projection, and the equivalent norms (\ref{equivalentnorms}), we arrive at
\begin{align*}
D_{t_n}^{\alpha}\vertiii{\theta^n}_n^2&\leq (|\lambda|\beta_2\|Ee_h^n\|  + \|b\|_{\infty} \|\nabla \eta^n\|+ \|c\|_{\infty} \| \eta^n\|)^2  +  (\|b\|_{\infty} + \|c\|_{\infty})^2 \| \theta^n\|_1^2 \\
&+ 2 \gamma_0\beta_3\| \Upsilon^n + r^n -D^{\alpha}_{t_n}\eta^n\|_1 \|\theta^n\|_1  + \frac{L}{\beta_0}\sum_{i=1}^{n-1} (K_{1-\alpha}^{n,i+1} - K_{1-\alpha}^{n,i})|t_n-t_i|\vertiii{\theta^i}_n^{2}+ \frac{L}{\beta_0}K_{1-\alpha}^{n,1}|t_n-t_0|\vertiii{\theta^0}_n^{2}\\
&\leq (|\lambda|\beta_2\|Ee_h^n\| +  \|b\|_{\infty} \|\eta^n\|_1+ \|c\|_{\infty} \| \eta^n\|)^2  +  \left(\frac{(\|b\|_{\infty}+\|c\|_{\infty})^2 }{\beta_0} \right)\vertiii{\theta^n}_{n}^2\\
&+ \frac{2 \gamma_0\beta_3}{\sqrt{\beta_0}}\| \Upsilon^n + r^n -D^{\alpha}_{t_n}\eta^n \|_1 \vertiii{\theta^n}_{n}+ \frac{L}{\beta_0}\sum_{i=1}^{n-1} (K_{1-\alpha}^{n,i+1} - K_{1-\alpha}^{n,i})|t_n-t_i|\vertiii{\theta^i}_n^{2}+ \frac{L}{\beta_0}K_{1-\alpha}^{n,1}|t_n-t_0|\vertiii{\theta^0}_n^{2}.
\end{align*}
Now, by applying the general Discrete Fractional Gr\"{o}nwall Inequality (Theorem \ref{DFGI}), we obtain
\begin{align*}
\vertiii{\theta^n}_n &\leq
2E_{\alpha}(2\Lambda_1 t_n^{\alpha})\Bigg(\vertiii{\theta^0}_n +  \frac{2\gamma_0\beta_3}{\sqrt{\beta_0}}\max_{1\leq j \leq n}\sum_{i=1}^jP_{\alpha}^{j,i}\|\Upsilon^i + r^i -D^{\alpha}_{t_n}\eta^i\|_1 \\
&+ \sqrt{2t_n^{\alpha}}\max_{1 \leq j \leq n }(\|b\|_{\infty} \|\eta^j\|_1+ \|c\|_{\infty} \| \eta^j\| +|\lambda| \beta_2\|Ee_h^j\|)\Bigg),
\end{align*}
where $\Lambda_{1}=\frac{(\|b\|_{\infty}+\|c\|_{\infty})^2 }{\beta_0}  + \frac{LT^{1-\alpha}}{\beta_0\Gamma(2-\alpha)} $. Finally, an application of the equivalent norms (\ref{equivalentnorms}) yields  
\begin{align*}
&\|\theta^n\|_1 \leq \frac{2}{\sqrt{\beta_0}}E_{\alpha}(2\Lambda_1 t_n^{\alpha})\Bigg(\sqrt{\gamma_0}\|\theta^0\|_1 + \frac{2\gamma_0\beta_3}{\sqrt{\beta_0}}\max_{1\leq j \leq n}\sum_{i=1}^jP_{\alpha}^{j,i}\left(\|\Upsilon^i \|_1+ \|r^i\|_1 +\|D^{\alpha}_{t_i}\eta^i\|_1\right)  \\
&\hspace{1cm}+ \sqrt{2t_n^{\alpha}}\max_{1 \leq j \leq n }\left(\|b\|_{\infty} \|\eta^j\|_1+ \|c\|_{\infty} \| \eta^j\| + |\lambda|  \beta_2\|Ee_h^j\|\right)\Bigg)\\
&\leq 2D_1 E_{\alpha}(2\Lambda_1 t_n^{\alpha})\Big(\|\theta^0\|_1 + \max_{1\leq j \leq n}\sum_{i=1}^jP_{\alpha}^{j,i}\left(\|\Upsilon^i\|_1 + \|r^i  \|_1\right) + \max_{1\leq j \leq n}\sum_{i=1}^jP_{\alpha}^{j,i}\|D^{\alpha}_{t_i}\eta^i\|_1 + \max_{1\leq j \leq n}\|b\|_{\infty} \|\eta^j\|_1\\
&+ \max_{1\leq j \leq n} \| \eta^j\| +\max_{1\leq j \leq n}\|Ee_h^j\| \Big),
\end{align*}
where $D_1=\max\left\{\sqrt{\frac{\gamma_0}{\beta_0}},\; \frac{2\gamma_0\beta_3}{\beta_0}, \;\frac{1}{\sqrt{\beta_0}}\sqrt{2T^{\alpha}}, \;\frac{\|c\|_{\infty}}{\sqrt{\beta_0}}\sqrt{2T^{\alpha}},\;\frac{|\lambda| \beta_2}{\sqrt{\beta_0}}\sqrt{2T^{\alpha}} \right\}.$ 
\end{proof}

%%%%%%%%%%%%%%%%%%%%%%%%%%%%%%%%%%%

\begin{lemma}\label{truncationerror} 
If the grading parameter $\gamma$ satisfies $1\;\leq\;\gamma\;\leq\;\frac{2(2-\alpha)}{\alpha} $ then under the assumptions in Theorem \ref{Regularity condition}, there holds
\begin{enumerate}
    \item[(i)] $\sum_{i=1}^nP_{\alpha}^{n,i}\|\Upsilon^i\|_m  \leq C \;\log_e (N) N^{-\min\{2-\alpha,\;\gamma\sigma_m \}}, \quad 1\leq n \leq N,$
    \item[(ii)] $\sum_{i=1}^nP_{\alpha}^{n,i}\|r^i\|_m  \leq C \;\log_e (N) N^{-\min\{2-\alpha,\;\gamma\sigma_m \}}, \quad 1\leq n \leq N,$
\end{enumerate}
where $\sigma_m=\alpha/(m+1)$, $m=0,1$, and $C$ is a positive constant which remains bounded as $\alpha\to 1^{-}$.
\end{lemma}

\begin{proof}
Under the assumptions in Theorem~\ref{Regularity condition}, using Lemma~5.2 in \cite{MR3639581} and Remark~5.5 in \cite{MR3639581} yields the following estimate
\begin{align}\label{trunc1}
    \|\Upsilon^n\| &\leq C \;n^{-\min\{ 2-\alpha,\; \gamma \alpha\}} = C\; t_n^{-\min\{\frac{2-\alpha}{\gamma}, \;\alpha \}} T^{\min\{\frac{2-\alpha}{\gamma}, \;\alpha \}} N^{-\min\{2-\alpha,\; \gamma\alpha \}}
    \end{align}
    and
    \begin{align}\label{trunc2}
    \|\Upsilon^n\|_1 &\leq C\; N^{\frac{\gamma \alpha}{2}}n^{-\min\{ 2-\alpha+ \frac{\gamma\alpha}{2}, \; \gamma \alpha\}} =  C \;t_n^{-\min\{\frac{2-\alpha}{\gamma}+\frac{\alpha}{2}, \;\alpha \}} T^{\min\{\frac{2-\alpha}{\gamma}+\frac{\alpha}{2},\; \alpha \}} N^{-\min\{2-\alpha, \;\frac{\gamma\alpha}{2} \}}
\end{align}
respectively, where the positive constant $C$ remains bounded as $\alpha \rightarrow 1^-$.

Define $\delta_N:=\frac{1}{8\log_e (N)}$, and $\beta_N^m:=\delta_N + \alpha - \min\left\{\frac{2-\alpha}{\gamma}+(\alpha-\sigma_m),\;\alpha\right\}$, and then use estimates (\ref{trunc1}) and (\ref{trunc2}) to obtain 

\begin{align*}
    \|\Upsilon^n\|_m &\leq  C_m\; t_n^{\beta_N^m - \alpha}\;t_n^{-\delta_N}\; N^{-\min\{2-\alpha,\; \gamma\sigma_m \}},\; C_m:= CT^{\min{\left(\frac{2-\alpha}{\gamma}+(\alpha-\sigma_m),\;\alpha\right)}}\\
    &\leq C_m \;t_n^{\beta_N^m - \alpha}\;t_1^{-\delta_N}\;  N^{-\min\{2-\alpha,\; \gamma\sigma_m \}}, \;m=0,1.
    \end{align*}
As for $1\;\leq\;\gamma\;\leq\;\frac{2(2-\alpha)}{\alpha} $ and $N>1$, $\beta_N^m\in(0,\;1)$, an appeal to Lemma~\ref{discretekernelproperties} $(e)$ yields 
\begin{align*}
    \sum_{j=1}^n P_{\alpha}^{n,j}\|\Upsilon^j\|_m &\leq C_m \;\frac{\Gamma(1+\beta_N^m -\alpha)}{\Gamma(1+\beta_N^m)} t_n^{\beta_N^m} \;t_1^{-\delta_N}\; N^{-\min\{2-\alpha,\; \gamma\sigma_m \}}\\
    &\leq C_m \;\frac{\Gamma(1+\beta_N^m -\alpha)}{\Gamma(1+\beta_N^m)} t_n^{\delta_N} \;t_1^{-\delta_N}\;  N^{-\min\{2-\alpha,\; \gamma\sigma_m \}}\\
    &\leq C_m \;\frac{\Gamma(1+\beta_N^m -\alpha)}{\Gamma(1+\beta_N^m)} N^{\gamma\delta_N}\;  N^{-\min\{2-\alpha,\; \gamma\sigma_m \}}\;=\; C_m \;\frac{\Gamma(1+\beta_N^m -\alpha)}{\Gamma(1+\beta_N^m)} e^{\gamma/8}\; N^{-\min\{2-\alpha,\; \gamma\sigma_m \}}\\
    &\leq C_m\; 2\Gamma(1+\delta_N -\alpha) e^{\gamma/8}\;  N^{-\min\{2-\alpha,\; \gamma\sigma_m \}},\; m=0,1.
\end{align*}
Thus, the limit $\lim_{\alpha\to 1^{-}}\Gamma(1+\delta_N - \alpha) = \frac{\Gamma(1+\delta_N)}{\delta_N}$ and the above estimate yields the estimate $(i)$. Following a similar argument, one can derive the second estimate $(ii)$.
\end{proof}
%%%%%%%%%%%%%%%%%%%%%%%%%%%%%%%%%%%%%%%%%%%%%%%%%%%%%%%%%%%%%%%%%%%%%%%%%%%%%%%%
%%%%%%%%%%%%%%%%%%%%%%%%%%%%%%%%%%%%%%%%%%%%%%%%%%%%%%%%%%%%%%%%%%%%%%%%%%%%%%%%%%
\begin{lemma}\label{DiscreteDerivativeDifference}
For $\phi^n\in H^m(\Omega),\;0\leq n\leq N,\;m=0,1,$ there holds
\begin{align*}
    & \sqrt{\sum_{j=1}^n P^{n,j}_{\alpha}\|D_{t_j}^{\alpha} (t_j \phi^j) - t_j D_{t_j}^{\alpha} \phi^j\|_m^2 } \;\leq\; C\; t_n^{1-\frac{\alpha}{2}}\max_{0\leq j \leq n}\|\phi^j\|_m,\quad 1\leq n\leq N,
\end{align*}
where the positive constant $C$ remains bounded as $\alpha\to 1^{-}$.
\end{lemma}
 %%%%%%%%%%%%%%%%%%%%%%%%%%%%%%%%%%%%%%%%%%%%%%%%%%%%%%%%%%%%%%%%%%%%%%
\begin{proof}
\noindent Apply the definition (\ref{L1scheme}) of discrete fractional derivative $D_{t_n}^{\alpha}$ and summation by parts formula $\sum_{j=2}^n f_j(g_j -g_{j-1}) = - \sum_{j=2}^n  (f_j - f_{j-1})g_{j-1} + f_n g_n - f_1 g_1$ to get the following result
\begin{align}
     \nonumber &D_{t_n}^{\alpha} (t_n \phi^n) - t_n D_{t_n}^{\alpha} \phi^n =   \sum_{j=2}^n (t_n-t_{j-1})\Big(K_{1-\alpha}^{n,j} - K_{1-\alpha}^{n,j-1} \Big)\phi^{j-1}+ (t_n - t_0)K^{n,1}_{1-\alpha}\phi^0 . 
\end{align}
Now, using the inequality (\ref{pollutionbound}) we obtain
\begin{align*}
    \|D_{t_n}^{\alpha} (t_n \phi^n) - t_n D_{t_n}^{\alpha} \phi^n \|_m\;& \leq  \; \max_{0\leq j \leq n}\|\phi^j\|_m \left(  \sum_{j=2}^n (t_n-t_{j-1})\left(K_{1-\alpha}^{n,j} - K_{1-\alpha}^{n,j-1} \right) + (t_n - t_0)K^{n,1}_{1-\alpha} \right)  \\
   &\leq\; \max_{0\leq j \leq n}\|\phi^j\|_m \frac{t_n^{1-\alpha}}{\Gamma{(2-\alpha)}}.
\end{align*}
Finally, an appeal to triangle inequality and Lemma~\ref{discretekernelproperties} $(c)$ with $j= 1$ yields the desired result.
\end{proof}

\begin{lemma}\label{etaestimate}
Under the assumptions in Theorem \ref{Regularity condition}, the following estimate holds-
\begin{align*}
    \sum_{j=1}^nP_{\alpha}^{n,j}\|D^{\alpha}_{t_j} \eta^j\|_m \leq C\left( h^{2-m}\log_e (N) + N^{-\min{\{2-\alpha,\;\gamma\sigma_m\}}} \right), \quad 1\leq n \leq N,
\end{align*}
where $\sigma_m=\alpha/(m+1),\;m=0,1,$ and the positive constant $C$ remains bounded as $\alpha\to 1^{-}$.
\end{lemma}
\begin{proof} 
Consider
\begin{align}\label{alpharobustetaestimate}
   \nonumber \sum_{j=1}^n P_{\alpha}^{n,j}\|D^{\alpha}_{t_j}\eta^j\|_m &\leq \sum_{j=1}^n P_{\alpha}^{n,j} \sum_{i=1}^j K_{1-\alpha}^{j,i} \| \eta^i - \eta^{i-1} \|_m \leq \sum_{i=1}^n \sum_{j=i}^n P_{\alpha}^{n,j} K_{1-\alpha}^{j,i} \| \int_{t_{i-1}}^{t_i} \partial_s \eta(s)ds \|_m\\
    & \leq \sum_{i=1}^n   \int_{t_{i-1}}^{t_i} \|\partial_s \eta(s) \|_m ds,
\end{align}
where we have used Lemma~\ref{discretekernelproperties}
 $(b)$ to obtain the last inequality. As under the assumptions in Theorem~\ref{Regularity condition}, $\|s\partial_s u(s)\|_2 \leq C, \; s \in(0,T]$, we derive the required estimate using (\ref{alpharobustetaestimate}) and the approximation property (\ref{optimaletaestimate2}) of the elliptic projection as follows
 \begin{align*}
  \sum_{j=1}^n P_{\alpha}^{n,j}\|D^{\alpha}_{t_j}\eta^j\|_m &\leq \int_0^{t_1}\|\partial_s \eta(s)\|_m ds + \sum_{i=2}^n \int_{t_{i-1}}^{t_i} \|\partial_s \eta(s)\|_m ds\\
   &\leq C h^{1-m}\int_0^{t_1}\|\partial_s u(s)\|_1 ds + Ch^{2-m}\sum_{i=2}^n \int_{t_{i-1}}^{t_i} s^{-1} ds\\ 
   &\leq C h^{1-m}\int_0^{t_1}s^{\frac{\alpha}{2}-1} ds + Ch^{2-m}\log_e \left(\frac{t_n}{t_1}\right)\\
   &\leq C h^{1-m} \frac{2t_1^{\frac{\alpha}{2}}}{\alpha} + Ch^{2-m}\log_e\left( n\right)\\
   & \leq C\max\left\{2,\; \frac{2 T^{\alpha/2}}{\alpha},\; \frac{T^\alpha}{\alpha^2}\right\} \left( N^{-\gamma \sigma_m} + h^{2-m} \log_e (n)\right).
 \end{align*}
\end{proof}
\begin{remark}\label{remark4.1}
    Under an additional regularity assumption $\|\partial_t u(t)\|_2\;\leq\; C\;t^{\sigma-1},$ for some $\sigma>0,$ the following estimate can be obtained
\begin{align*}
\sum_{j=1}^nP_{\alpha}^{n,j}\|D^{\alpha}_{t_j} \eta^j\|_m \leq C\; h^{2-m}, \quad 1\leq n \leq N,
\end{align*}
which is independent of $\log_e (N)$ factor.
\end{remark}
Now, our main result is established in the following Theorem.

\begin{theorem}\label{L2H1errortheorem}
Let $u_h^n$ and $u(t_{n})$ be the solution of the problem (\ref{fullydiscrete}) and (\ref{variation1}) at the temporal grid $t_{n}$, respectively. Then, under the assumptions in Theorem \ref{Regularity condition}, the following estimate holds
\begin{align}
% \label{finalestimateI}
\nonumber& \max_{1\leq n \leq N} \|u_h^n - u(t_{n}) \|_m \leq   C\; \log_e (N)\left(h^{2-m}+  N^{-\min\{\gamma\sigma_m, 2-\alpha\}}   \right),\;m=0,1, 
\end{align}
where $\sigma_m=\alpha/(m+1)$. In particular, when the grading parameter $\displaystyle \gamma = \frac{2(2-\alpha)}{\alpha}$, there holds
\begin{align}
% \label{finalestimateII}
\nonumber& \max_{1\leq n \leq N} \|u_h^n - u(t_{n}) \|_m \leq   C \; \log_e (N) (h^{2-m}  + N^{- (2-\alpha)}   ),\;m=0,1,
\end{align}
where the positive constants $C$ appearing in the above estimates remains bounded as $\alpha\to 1^{-}$.
\end{theorem}
\begin{proof}
An application of the estimate (\ref{optimaletaestimate}),  Lemma~\ref{truncationerror} and Lemma~\ref{etaestimate} in Lemma~\ref{thetaestimate_l2norm} yields
\begin{align*}
     \|u_h^n - u(t_{n}) \| \;&\leq\; \|\eta^n\| + \|\theta^n\| \; \leq\;   C \;\log_e( N)(h^2 + N^{-\min\{\gamma\alpha, 2-\alpha\}}  ). 
\end{align*}
Now, by applying the above estimate, the estimate (\ref{optimaletaestimate}), Lemma~\ref{truncationerror} and Lemma~\ref{etaestimate} in Lemma~\ref{thetaestimate_h1norm}, we obtain
\begin{align*}
     \|u_h^n - u(t_{n}) \|_1 \;&\leq\; \|\eta^n\|_1 + \|\theta^n\|_1 \;\leq\;  C \;\log_e (N)(h + N^{-\min\{\frac{\gamma\alpha}{2}, 2-\alpha\}}   ).
\end{align*}
Hence, the desired result follows by combining the previous two estimates.  
\end{proof}
%%%%%%%%%%%%%%%%%%%%%%%%%%%%%%%%
\section{$L^{\infty}$-norm estimate }\label{Linftyerroranalysis}
%%%%%%%%%%%%%%%%%%%%%%%%%%%%%%%
When the convection coefficient $\boldsymbol{b}$ in (\ref{EllipticOperator}) is zero, a superconvergence in $H^1$-norm error estimate of $\theta^n$ is derived, and as a consequence, an $L^{\infty}$ error estimate established for the 2D-problem (\ref{pide}) in this section. To obtain these results, we first establish a few auxiliary results. 
\begin{lemma}\label{thetaestimate_h1norm_sup}
Under the assumptions in Theorem \ref{Regularity condition} and maximum temporal grid size restriction (\ref{timecondition}), there holds
 \begin{align}
 \nonumber& \|t_n\theta^n\|_1\; \leq 2D_2 E_{\alpha}(2\widetilde{\Lambda}_2 t_n^{\alpha})\;\max_{1\leq j \leq n}\Bigg(\sum_{i=1}^jP_{\alpha}^{j,i}\left(\|t_i \Upsilon^i \|_1 +\| t_i r^i  \|_1\right)+ \sqrt{\sum_{i=1}^{j}P_{\alpha}^{j,i} \left(\|D^{\alpha}_{t_i}(t_i \eta^i)\|^2 + \|T^i\|^2\right)} \\
\nonumber&\hspace{7cm}  +\|b\|_{\infty} \|t_j \eta^j\|_1+ \|t_j e_h^j\|+\|E(t_j e_h^j)\|\quad \Bigg),\;1\leq n \leq N,
\end{align}
where $T^j:= D_{t_j}^{\alpha} (t_j e_h^j) - t_j D_{t_j}^{\alpha} e_h^j, \; D_2=\frac{2}{\sqrt{\beta_0}}\max\left\{ \frac{\gamma_0\beta_3}{\sqrt{\beta_0}},\;1,\;|\lambda|\beta_2\sqrt{T^{\alpha}}, \;\sqrt{T^{\alpha}},  \;\|c\|_{\infty}\sqrt{T^{\alpha}}\right\}$ and $\widetilde{\Lambda}_2=\frac{\|b\|_{\infty}^2 }{\beta_0} + \frac{LT^{1-\alpha}}{\beta_0\Gamma(2-\alpha)}$.
\end{lemma}
\begin{proof}
Multiply $t_n$ in $(\ref{erroreqtheta})$ and then set $v_h = \mathcal{L}^n_{0h}t_n \theta^n$ to obtain
\begin{align}
    \nonumber (D^\alpha_{t_n}(t_n \theta^n), \mathcal{L}^n_{0h}(t_n \theta^n)) + a_0(t_n; t_n \theta^n, \mathcal{L}^n_{0h}(t_n \theta^n) )= &\; (\lambda\mathcal{I}(E(t_n e_h^n)), \mathcal{L}^n_{0h}(t_n \theta^n)) -a_1(t_n; t_n e^n_h, \mathcal{L}^n_{0h}(t_n \theta^n))\\
    \label{H1erroreq1}& + (t_n \Upsilon^n + t_n r^n -D^\alpha_{t_n}(t_n\eta^n) + T^n, \mathcal{L}^n_{0h}(t_n \theta^n)),
\end{align}
where $T^n:= D_{t_n}^{\alpha} (t_n e_h^n) - t_n D_{t_n}^{\alpha} e_h^n$. Apply the estimates (\ref{deoestimate}), (\ref{pollutionterm}), and the $L^2$-projection $P_h$ in (\ref{H1erroreq1}) to obtain
\begin{align*}
    &\frac{1}{2}D^\alpha_{t_n}\vertiii{t_n \theta^n}_n^2 + \|\mathcal{L}^n_{0h}(t_n \theta^n)\|^2 \leq \; \frac{1}{2}\sum_{j=1}^{n-1}(K_{1-\alpha}^{n,j+1} - K_{1-\alpha}^{n,j})(\vertiii{t_j \theta^j}_n^2 - \vertiii{t_j \theta^j}_j^2) + (P_h (t_n \Upsilon^n + t_n r^n) , \mathcal{L}^n_{0h}(t_n \theta^n))  \\
    &\hspace{3cm }+ (\lambda\mathcal{I}(E(t_n e_h^n))-D^\alpha_{t_n}(t_n\eta^n) + T^n, \mathcal{L}^n_{0h}(t_n \theta^n)) -a_1(t_n; t_n e^n_h, \mathcal{L}^n_{0h}(t_n \theta^n)),
\end{align*}
and then an application of the estimate (\ref{Lipschitz}), relation (\ref{deo}) and equivalence norms (\ref{equivalentnorms}) yields
\begin{align*}
 &\frac{1}{2}D^\alpha_{t_n}\vertiii{t_n \theta^n}_n^2 + \|\mathcal{L}^n_{0h}(t_n \theta^n)\|^2 \leq   \frac{L}{2\beta_0}\sum_{j=1}^{n-1}(K_{1-\alpha}^{n,j+1} - K_{1-\alpha}^{n,j})|t_n- t_j|\vertiii{t_j \theta^j}_n^2 + a_0(t_n;t_n \theta^n, P_h (t_n \Upsilon^n + t_n r^n) ) \\
 &\hspace{3cm }+ (\lambda\mathcal{I}(E(t_n e_h^n))-D^\alpha_{t_n}(t_n\eta^n) + T^n, \mathcal{L}^n_{0h}(t_n \theta^n)) -a_1(t_n; t_n e^n_h, \mathcal{L}^n_{0h}(t_n \theta^n)).
\end{align*}
Now, by using (\ref{bounded}), (\ref{a1bounded1}), (\ref{integralbounded}) and the Cauchy-Schwarz inequality, we obtain
\begin{align*}
& \frac{1}{2}D^\alpha_{t_n}\vertiii{t_n \theta^n}_n^2 + \|\mathcal{L}^n_{0h}(t_n \theta^n)\|^2 \leq  \frac{L}{2\beta_0}\sum_{j=1}^{n-1}(K_{1-\alpha}^{n,j+1} - K_{1-\alpha}^{n,j})|t_n- t_j|\vertiii{t_j \theta^j}_n^2  
+ \gamma_0\|P_h(t_n \Upsilon^n + t_n r^n)\|_1 \|t_n \theta^n\|_1\\
 & \hspace{3cm }+ \left(|\lambda|\beta_2\|E(t_n e_h^n)\| + \|D^\alpha_{t_n}(t_n \eta^n)\| + \|T^n\| + \|b\|_{\infty}\|\nabla (t_n e_h^n)\| +  \|c\|_{\infty}\|t_n e_h^n\| \right)\| \mathcal{L}^n_{0h}(t_n \theta^n)\|.
\end{align*}
An application of AM-GM inequality, $H^1$-norm stability (\ref{l2projectionh1stability}) of $P_h$ and the equivalent norm (\ref{equivalentnorms}) shows
\begin{align*}
& \frac{1}{2}D^\alpha_{t_n}\vertiii{t_n \theta^n}_n^2 + \|\mathcal{L}^n_{0h}(t_n \theta^n)\|^2 \leq  \frac{L}{2\beta_0}\sum_{j=1}^{n-1}(K_{1-\alpha}^{n,j+1} - K_{1-\alpha}^{n,j})|t_n- t_j|\vertiii{t_j \theta^j}_n^2  
+  \frac{\gamma_0\beta_3}{\sqrt{\beta_0}}\|t_n\Upsilon^n + t_nr^n\|_1 \vertiii{t_n \theta^n}_n \\  
 &\hspace{0cm }+\frac{1}{2}(|\lambda| \beta_2 \|E(t_n e_h^n)\| + \|D^\alpha_{t_n}(t_n\eta^n)\|+ \|T^n\| + \|b\|_{\infty}\|t_n\eta^n\|_1 + \|c\|_{\infty}\|t_n e_h^n\| )^2 + \frac{\|b\|_{\infty}^2}{2\beta_0}\vertiii{t_n \theta^n}_n^2 +  \|\mathcal{L}^n_{0h}(t_n \theta^n)\|^2.
\end{align*}
Thus, 
\begin{align*}
& D^\alpha_{t_n}\vertiii{t_n \theta^n}_n^2 \leq \frac{\|b\|_{\infty}^2}{\beta_0}\vertiii{t_n \theta^n}_n^2 + \frac{L}{\beta_0}\sum_{j=1}^{n-1}(K_{1-\alpha}^{n,j+1} - K_{1-\alpha}^{n,j})|t_n- t_j|\vertiii{t_j \theta^j}_n^2   +  \frac{2\gamma_0\beta_3}{\sqrt{\beta_0}}\|t_n\Upsilon^n + t_nr^n\|_1 \vertiii{t_n \theta^n}_n\\  
 &\hspace{3cm }+2\left(|\lambda| \beta_2 \|E(t_n e_h^n)\|  + \|b\|_{\infty}\|t_n\eta^n\|_1 + \|c\|_{\infty}\|t_n e_h^n\| \right)^2 + 2\left(\|D^\alpha_{t_n}(t_n\eta^n)\| + \|T^n\|\right)^2
\end{align*}
and hence, by applying the general fractional Gronwall's Inequality (Theorem~\ref{DFGI}), we obtain
\begin{align*}
&\vertiii{t_n \theta^n}_n \leq \; 2E_{\alpha}(2 \Lambda_2 t_n^\alpha) \Bigg( \frac{2\gamma_0\beta_3}{\sqrt{\beta_0}}\max_{1\leq j \leq n}\sum_{i=1}^jP_\alpha^{j,i}(\|t_i \Upsilon^i\|_1 +\|t_i r^i\|_1)+2\max_{1\leq j \leq n} \sqrt{\sum_{i=1}^{j}P_{\alpha}^{j,i} \left(\|D^{\alpha}_{t_i}(t_i \eta^i)\|^2 + \|T^i\|^2\right)}\\
&\hspace{3cm }+ 2\sqrt{t_n^\alpha}\max_{1\leq j \leq n}\left(|\lambda|\beta_2\|E(t_je_h^j)\| + \|b\|_{\infty}\|t_j \eta^j\|_1 + \|c\|_{\infty}\|t_j e_h^j\|\right) \Bigg). 
\end{align*}
Finally, an application of equivalent norms (\ref{equivalentnorms}) yields the result.
\end{proof} 

%%%%%%%%%%%%%%%%%%%%%%
\begin{lemma}\label{etaestimatemoreregular}
Under the assumptions in Theorem \ref{Regularity condition}, the following estimate holds
\begin{align}
% \label{etaestimateresultL2regular}
\nonumber&\sqrt{\sum_{j=1}^nP_{\alpha}^{n,j}\|D^{\alpha}_{t_j} (t_j\eta^j)\|^2} \leq C\; h^{2}\;t_n^{1-\frac{\alpha}{2}} , \quad 1\leq n \leq N.
\end{align}
\end{lemma}
\begin{proof}
An appeal to the approximation property (\ref{optimaletaestimate2}) of the elliptic projection $\mathcal{R}_{h}(t): H_{0}^{1}(\Omega)\to S_{h}, \; t\in [0,T], $ and the regularity result Theorem \ref{Regularity condition} yields
\begin{align}
\nonumber \|D^{\alpha}_{t_n} (t_n \eta^n)\| \;& \leq\; \sum_{j=1}^{n}K_{1-\alpha}^{n,j}\left\|(t_j\eta^j - t_{j-1}\eta^{j-1})\right\| \leq\; \sum_{j=1}^n K_{1-\alpha}^{n,j}\int_{t_{j-1}}^{t_j}\left\|\partial_s (s\eta(s))\right\| ds\\
\nonumber & \leq\; Ch^{2}\;\sum_{j=1}^n K_{1-\alpha}^{n,j}\;\Delta t_j\;=\; Ch^{2}\frac{t_n^{1-\alpha}}{\Gamma{(2-\alpha)}},\;1\leq n\leq N,
\end{align}  
Now, apply Lemma~\ref{discretekernelproperties} $(c)$ with $j=1$ and the above estimate to get the desired estimate.
\end{proof}
Finally, a sharp estimate of $\theta^n:=\mathcal{R}_h(t_n)u(t_n)-u^n_h $ in $H^1$-norm is obtained in the following result.
\begin{lemma}\label{superconvergenceregular}
Let the convection coefficient $\boldsymbol{b}$ in (\ref{EllipticOperator}) be zero.
Then, under the assumptions in Theorem \ref{Regularity condition}, there holds
\begin{align*}
& \max_{1\leq n \leq N}\;t_n^{\frac{\alpha}{2}} \| \theta^n \|_1 \; \leq \;  C\;\log_e(N) \left(h^{2}+ N^{-\min\{\frac{\gamma\alpha}{2}, 2-\alpha\}}  \right).
\end{align*}
In particular, when the grading parameter $\displaystyle \gamma = \frac{2(2-\alpha)}{\alpha}$, there holds
\begin{align*}
& \max_{1\leq n \leq N} \;t_n^{\frac{\alpha}{2}} \| \theta^n \|_1 \; \leq\; C\;\log_e(N) \left(h^{2} + N^{-( 2-\alpha)}  \right).
\end{align*}
\end{lemma}
\begin{proof}
An application of Lemma~\ref{truncationerror}, Lemma~\ref{DiscreteDerivativeDifference}, Theorem~\ref{L2H1errortheorem} and Lemma~\ref{etaestimatemoreregular} yields
\begin{align*}
    \sum_{i=1}^j P_\alpha^{j,i}\left(\|t_i \Upsilon^i\|_1 + \|t_i r^i\|_1\right) &\leq Ct_j \log_e(N)\;N^{-\min\{\frac{\gamma\alpha}{2}, 2-\alpha \}}, \quad 1 \leq j \leq N, \\
    \sqrt{\sum_{i=1}^j P_\alpha^{j,i} \|T^i\|^2} &\leq Ct_j^{1-\frac{\alpha}{2}}\max_{0\leq i \leq j}\|e_h^i\|^2, \quad 1\leq j \leq N, \\
    \max_{0\leq i \leq j}\|e_h^i\|^2 & \leq C\;\log_e(N)\left(h^2 + N^{-\min\{\gamma\alpha, 2-\alpha \}}  \right), \\
    \sqrt{\sum_{i=1}^j P_\alpha^{j,i} \|D^\alpha_{t_i}(t_i \eta^i)\|^2} &\leq C h^2 t_j^{1-\frac{\alpha}{2}}, \quad 1 \leq j \leq N.
\end{align*}
Now, apply the above estimate in Lemma~\ref{thetaestimate_h1norm_sup} to get the desired result.
\end{proof}
As a consequence of the super-convergence estimate for $\theta^n$ in Lemma~\ref{superconvergenceregular}, we obtain the following $L^{\infty}$-norm estimate in one and two dimensions.

\begin{theorem}\label{Linftynormestimate}
 Let $u_h^n$ and $u(t_{n})$ be the solution of the problem (\ref{fullydiscrete}) and (\ref{variation1}) at the temporal grid $t_{n}$, respectively. Further, let the convection coefficient $\boldsymbol{b}$ in (\ref{EllipticOperator}) be zero. Then, under the assumptions in Theorem \ref{Regularity condition}, $\displaystyle \gamma = \frac{2(2-\alpha)}{\alpha}$, and quasi-uniform triangulation $\mathcal{T}_h$, there exists a positive constant $C$ independent of $h$, $N$ and $p$ such that
\begin{align*}
%\label{Linftynorm2d}
  t_n^{\frac{\alpha}{2}}\|u_h^n - u(t_n)\|_{L^{\infty}(\Omega)}\; \leq\; 
  C \ell_{h,d} \;\log_e (N)\left( h^{2-\frac{2}{p}}\|t_n^{\frac{\alpha}{2}}\;u(t_n)\|_{W^{2,p}(\Omega)} + N^{ -(2-\alpha)} \right), \; 1\leq n \leq N,\; p\geq d,
 \end{align*}
 where the constant 
$C$ remains bounded as $\alpha\to 1^{-}$, and $\ell_{h,d}:=\begin{cases}
     1 & {:} \; d=1,\\
     1+|\log_e {h}| &{:}\; d=2.
 \end{cases}$
\end{theorem}
\begin{proof}
Using Sobolev inequality, inverse inequality (see \cite{MR2249024} Lemma~6.4) and Lemma~\ref{superconvergenceregular}, we obtain
 \begin{align*}
     t_n^{\frac{\alpha}{2}}\|\theta^n\|_{L^{\infty}(\Omega)}\leq C\;\ell_{h,d}^{1/2} \|\nabla\theta^n\| \leq C\;\ell_{h,d}^{1/2}\;\log_e (N)(h^{2}+ N^{ -(2-\alpha)}  ),
 \end{align*}
and, from \cite{MR2249024}  (equation (6.81) on page 103), we obtain
 \begin{align*} \|\eta^n\|_{L^{\infty}(\Omega)}\leq  C \ell_{h,d}\; h^{2-\frac{2}{p}}\|u(t_n)\|_{W^{2,p}(\Omega)}.
 \end{align*}
Now, using $\|u_h^n - u(t_n)\|_{L^{\infty}(\Omega)} \leq \|\theta^n\|_{L^{\infty}(\Omega)}+\|\eta^n\|_{L^{\infty}(\Omega)}$ and the above two estimates we get the desired result.
\end{proof}

% {\color{blue}
% \begin{remark}
%    The presence of $\ln N$ factors ensures the boundedness of constants $C$ when considering the limit $\alpha \rightarrow 1^-$. Therefore, for any fixed $\alpha$, discarding all $\ln N$ factors is admissible. 
% \end{remark}
% }

%%%%%%%%%%%%%%%%%%%%%%%%%%%%%%%%
\section{Numerical results} \label{section6}  
%%%%%%%%%%%%%%%%%%%%%%%%%%%%%%%%
This section provides numerical experiments to justify our theoretical findings. Let us take $h^2 = N^{-(2-\alpha)}$ in Theorem~\ref{L2H1errortheorem} and Theorem~\ref{Linftynormestimate}, respectively, and then compute the rate of convergence with respect to $H^m$-norm using the formula $R^m = \log_e\left(\frac{E_{h_1}^m}{E_{h_2}^m}\right)/\log_e\left(\frac{h_1}{h_2}\right),\;m=0,1$, and with respect to max-norm via $R^\infty = \log_e\left(\frac{E_{h_1}^\infty}{E_{h_2}^\infty}\right)/\log_e\left(\frac{h_1}{h_2}\right)$, where 
\begin{align}
    \nonumber & E_{h}^m:= \begin{cases}
        \max\limits_{1 \leq n \leq N}\|u_h^n - u(t_n)\|_m & \text{: if the analytical solution $u$ is known,}\\
        \max\limits_{1 \leq n \leq N}\|u_{h}^n - \widetilde{u}_h(t_n)\|_m  & \text{: otherwise, }
    \end{cases} \\
    \nonumber & E_{h}^\infty:= \begin{cases}
        \max\limits_{\boldsymbol{x}_j\in\mathcal{N},\;1 \leq n \leq N}\;t_n^{\frac{\alpha}{2}}|u_h^n(\boldsymbol{x}_j) - u(\boldsymbol{x}_j,\;t_n)|& \text{: if the analytical solution $u$ is known,}\\
        \max\limits_{\boldsymbol{x}_j\in\mathcal{N},\;1 \leq n \leq N}\;t_n^{\frac{\alpha}{2}}|u_{h}^n(\boldsymbol{x}_j) - \widetilde{u}_h(\boldsymbol{x}_j,\; t_n)| & \text{: otherwise, }
    \end{cases}
\end{align}

    \noindent $\mathcal{N}:=\{\boldsymbol{x}_j\}_{j=1}^{N_h + 500} $ is a collection of points in the triangulation $\mathcal{T}_h\cap\overline{\Omega}\subseteq \mathbb{R}^d,\;d=1,2,$ $N_h$ denotes the number of nodal points in $\mathcal{T}_h\cap\overline{\Omega}$, and $\widetilde{u}_h(t),\;t\in[0,\;T]$, is obtained from $ u^k_{\frac{h}{2}},\;0\leq k \leq 2N$, by applying a piece-wise linear interpolation in time and a piece-wise quadratic interpolation in space direction. 

As the solution is non-smooth in time for all the cases, we have used graded mesh with the grading parameter $\gamma = 2\frac{(2-\alpha)}{\alpha}$ to resolve the initial singularity. In each example, the errors $E_h^p$ and the corresponding computed rate of convergence $R^p,\;p=0,1,\infty,$ are displayed for $\alpha = 0.2, \;0.5$ and $ 0.8$. The implementation is conducted using the FreeFem++ software.

\begin{example}\label{1Dpde1}  
In this example, time-fractional PDE in one dimension is considered with variable coefficients. For $\Omega = (0,1)$, let the exact solution to the problem (\ref{pide}) is given as $u(x,t) = \sin(\pi x)(t^{\alpha}+t^3)$
with $A(x,t)=2+ x^2 + \sin(t),\;
 b(x,t) =1 + x^2 + t^2,$ $c(x,t) =1 + 2x^2 + \sin(t)$, $\lambda =0$, and $T=1$. The initial condition $u_0$ and the source term $f$ are chosen according to the exact solution $u$.
 The computed rate of convergence (ROC) $R^0$, $R^1$, and $R^\infty$ are listed in Table~\ref{L2tableforpde1freefemf4}. It is observed that the computed rate of convergence confirms the theoretical rate of convergence.

\begin{table}[H]
\centering
\begin{tabular}{||l|l|l|l|l|l|l||}
\hline
\multicolumn{2}{||l|}{$N$} & 4 & 8 & 16 & 32 & 64 \\ \hline \hline
\multicolumn{1}{||l|}{\multirow{6}{*}{$\alpha$=0.2}} & $E^0_h$ & 0.0624828
& 0.0185501
& 0.00424618
& 0.00107117
& 0.000289391
\\ \cline{2-7} 
\multicolumn{1}{||l|}{}                  & $R^0$ & - & 2.17
& 2.38
& 2.41
& 2.09
 \\ \cline{2-7} 
\multicolumn{1}{||l|}{}                  & $E^1_h$ & 1.00104
& 0.574667
& 0.309932
& 0.175225
& 0.0937208
\\ \cline{2-7} 
\multicolumn{1}{||l|}{}                  & $R^1$ & - & 0.99
& 1.00
& 1.00
& 1.00
\\ \cline{2-7} 
\multicolumn{1}{||l|}{}                  & $E^\infty_h$ & 0.119643
& 0.038039
& 0.00890195
& 0.00251029
& 0.000954948
\\ \cline{2-7} 
\multicolumn{1}{||l|}{}                  & $R^\infty$ & - & 2.05
& 2.35
& 2.22
& 1.54
  \\ \hline \hline
\multicolumn{1}{||l|}{\multirow{6}{*}{$\alpha$=0.5}} & $E^0_h$ & 0.106965
& 0.0357494
& 0.0132082
& 0.00399679
& 0.00144161
\\ \cline{2-7} 
\multicolumn{1}{||l|}{}                  & $R^0$ &  - & 2.15
& 2.12
& 2.14
& 2.05
\\ \cline{2-7} 
\multicolumn{1}{||l|}{}   & $E^1_h$ & 1.32926
& 0.803416
& 0.503175
& 0.287748
& 0.175175
   \\ \cline{2-7} 
\multicolumn{1}{||l|}{}                  & $R^1$ &  - & 0.99
& 1.00
& 1.00
& 1.00
\\ \cline{2-7} 
\multicolumn{1}{||l|}{}                  & $E^\infty_h$ & 0.217937
& 0.0735211
& 0.0272697
& 0.00837528
& 0.00302581
\\ \cline{2-7} 
\multicolumn{1}{||l|}{}                  & $R^\infty$ &  - & 2.13
& 2.11
& 2.11
& 2.05
\\ \hline \hline
\multicolumn{1}{||l|}{\multirow{6}{*}{$\alpha$=0.8}} & $E^0_h$ & 0.101002
& 0.0558257
& 0.0240716
& 0.0138215
& 0.00507639
\\ \cline{2-7} 
\multicolumn{1}{||l|}{}                  & $R^0$ & - & 2.06
& 2.07
& 1.93
& 2.06
 \\ \cline{2-7} 
\multicolumn{1}{||l|}{}                  & $E^1_h$ &  1.33136
& 1.0026
& 0.67028
& 0.503078
& 0.309822
 \\ \cline{2-7} 
\multicolumn{1}{||l|}{}                  & $R^1$ & - & 0.99
& 0.99
& 1.00
& 1.00
\\ \cline{2-7} 
\multicolumn{1}{||l|}{}                  & $E^\infty_h$ &  0.207607
& 0.109068
& 0.049018
& 0.0283475
& 0.0104925
   \\ \cline{2-7} 
\multicolumn{1}{||l|}{}                  & $R^\infty$ & - & 2.24
& 1.97
& 1.90
& 2.05
 \\ 
\hline 
\end{tabular}
\caption{Error $E_h^p$ and rate of convergence $R^p \; p=0,1,\infty$ of the proposed method for Example~\ref{1Dpde1}.}\label{L2tableforpde1freefemf4}
\end{table}

 \end{example}

\begin{example}\label{1Dpde2}  
In this example, one-dimensional time-fractional PDE (\ref{pide}) defined over the interval $\Omega = (-1,1)$ and $T=1$ with $A(x,t)=2 + x^2t,\;
 b(x,t) =xt,$ $c(x,t) =x^2t$, $\lambda =0$, and the exact solution 
$u(x,t)= (1+t^{\alpha})x(1- |x|)$. The initial condition and the source term are chosen accordingly. The computed rates of convergence $R^0$, $R^1$, and $R^\infty$ listed in Table~\ref{Linftableforpde2freefemf4} align with the theoretical convergence rate.

\begin{table}[H]
\centering
\begin{tabular}{||l|l|l|l|l|l|l||}
\hline 
\multicolumn{2}{||l|}{$N$} & 4 & 8 & 16 & 32 & 64  \\ \hline \hline
\multicolumn{1}{||l|}{\multirow{6}{*}{$\alpha$=0.2}} & $E^0_h$ & 0.0347015
& 0.0105464
& 0.00293698
& 0.00087132
& 0.000256965
\\ \cline{2-7} 
\multicolumn{1}{||l|}{}                  & $R^0$ & - & 1.92
& 1.95
& 1.99
& 1.99
 \\ \cline{2-7} 
\multicolumn{1}{||l|}{}                  & $E^1_h$ & 0.444256
& 0.244647
& 0.128841
& 0.0710062
& 0.0382664
 \\ \cline{2-7} 
\multicolumn{1}{||l|}{}                  & $R^1$ & - & 0.96
& 0.98
& 0.98
& 1.01
   \\ \cline{2-7} 
\multicolumn{1}{||l|}{}                  & $E^\infty_h$ & 0.0384505
& 0.011422
& 0.00316715
& 0.000933004
& 0.000282078
   \\ \cline{2-7} 
\multicolumn{1}{||l|}{}                  & $R^\infty$ & - & 1.96
& 1.96
& 2.00
& 1.95
  \\ \hline \hline
\multicolumn{1}{||l|}{\multirow{6}{*}{$\alpha$=0.5}} & $E^0_h$  & 0.0510937
& 0.0182744
& 0.00711651
& 0.00249264
& 0.000861023
   \\ \cline{2-7} 
\multicolumn{1}{||l|}{}                  & $R^0$ &  - & 2.01
& 2.01
& 2.00
& 2.00
 \\ \cline{2-7} 
\multicolumn{1}{||l|}{}   & $E^1_h$ & 0.547348
& 0.327248
& 0.204283
& 0.119419
& 0.0710064
   \\ \cline{2-7} 
\multicolumn{1}{||l|}{}                  & $R^1$ &  - & 1.01
& 1.00
& 1.03
& 0.98
 \\ \cline{2-7} 
\multicolumn{1}{||l|}{}                  & $E^\infty_h$ & 0.0518695
& 0.0190626
& 0.00756041
& 0.00271262
& 0.000932223
  \\ \cline{2-7} 
\multicolumn{1}{||l|}{}                  & $R^\infty$ &  - & 1.96
& 1.97
& 1.96
& 2.00
  \\ \hline \hline
\multicolumn{1}{||l|}{\multirow{6}{*}{$\alpha$=0.8}} & $E^0_h$ & 0.065664
& 0.0345874
& 0.014448
& 0.00708024
& 0.00288559
  \\ \cline{2-7} 
\multicolumn{1}{||l|}{}                  & $R^0$ & - & 1.91
& 1.93
& 1.90
& 2.01
 \\ \cline{2-7} 
\multicolumn{1}{||l|}{}                  & $E^1_h$ &  0.610072
& 0.444263
& 0.287752
& 0.204284
& 0.128841
 \\ \cline{2-7} 
\multicolumn{1}{||l|}{}                  & $R^1$ & - & 0.94
& 0.96
& 0.91
& 1.03
  \\ \cline{2-7} 
\multicolumn{1}{||l|}{}                  & $E^\infty_h$ & 0.0743104
& 0.0384007
& 0.0158235
& 0.0075526
& 0.00315083
  \\ \cline{2-7} 
\multicolumn{1}{||l|}{}                  & $R^\infty$ & - & 1.96
& 1.96
& 1.97
& 1.96
 \\ 
\hline 
\end{tabular}
\caption{Error $E_h^p$ and rate of convergence $R^p \; p=0,1,\infty$ of the proposed method for Example~\ref{1Dpde2}.}\label{Linftableforpde2freefemf4}
\end{table}

\end{example}

\begin{example}\label{VariableceffPDE} 
Here, we consider time-fractional PDE (\ref{pide}) in two dimensions where $\Omega = (0,1)^2$ the unit square and $T=1$ for $\boldsymbol{x} = (x_1,x_2) \in \Omega $ with  $\textbf{A}(\boldsymbol{x},t)=\begin{bmatrix} 
2-\cos(t) & x_1x_2 \\
x_1x_2 & 2-\sin(t) \\
\end{bmatrix},\;
\textbf{b}(\boldsymbol{x},t) =\begin{bmatrix} 
1 + 2x_1x_2\\
1 + x_1x_2\\
\end{bmatrix}$, $c(\boldsymbol{x},t) =1 -\sin(t)$, and $\lambda =0$. The initial condition $u_0$ and the source term $f$ are chosen according to the exact solution
$ u(\boldsymbol{x},t)= \sin(2\pi x_1)\sin(2\pi x_2)(t^{\alpha}+t^3)$ .
For this example, the computational rate of convergence (ROC) is given in Table~\ref{2DPDE1freefemf4}.

\begin{table}[H]
\centering
\begin{tabular}{||l|l|l|l|l|l|l||}
\hline
\multicolumn{2}{||l|}{$N$} & 4 & 8 & 16 & 32 & 64   \\ \hline \hline
\multicolumn{1}{||l|}{\multirow{6}{*}{$\alpha$=0.2}} & $E^0_h$ & 0.482987
& 0.187402
& 0.0574965
& 0.0185859
& 0.0053143
\\ \cline{2-7} 
\multicolumn{1}{||l|}{}                  & $R^0$ & - & 1.69
& 1.91
& 1.98
& 2.00
\\ \cline{2-7} 
\multicolumn{1}{||l|}{}                  & $E^1_h$ &  6.01222
& 3.7969
& 2.11704
& 1.20824
& 0.648361
\\ \cline{2-7} 
\multicolumn{1}{||l|}{}                  & $R^1$ & - & 0.82
& 0.94
& 0.98
& 0.99
\\ \cline{2-7} 
\multicolumn{1}{||l|}{}                  & $E^\infty_h$ & 1.13525
& 0.457247
& 0.141354
& 0.0462793
& 0.0133405
\\ \cline{2-7} 
\multicolumn{1}{||l|}{}                  & $R^\infty$ & - & 1.63
& 1.90
& 1.96
& 1.99
  \\ \hline \hline
\multicolumn{1}{||l|}{\multirow{6}{*}{$\alpha$=0.5}} & $E^0_h$  & 0.7084
& 0.338061
& 0.145868
& 0.0496857
& 0.0186465
  \\ \cline{2-7} 
\multicolumn{1}{||l|}{}                  & $R^0$ &  - & 1.45
& 1.79
& 1.92
& 1.97
  \\ \cline{2-7} 
\multicolumn{1}{||l|}{}   & $E^1_h$ & 7.17168
& 5.07323
& 3.36094
& 1.96969
& 1.20822
  \\ \cline{2-7} 
\multicolumn{1}{||l|}{}                  & $R^1$ &  - & 0.68
& 0.88
& 0.95
& 0.98
  \\ \cline{2-7} 
\multicolumn{1}{||l|}{}                  & $E^\infty_h$ & 1.67434
& 0.819977
& 0.35781
& 0.123991
& 0.0464852
  \\ \cline{2-7} 
\multicolumn{1}{||l|}{}                  & $R^\infty$ &  - & 1.40
& 1.76
& 1.89
& 1.98
 \\ \hline \hline
\multicolumn{1}{||l|}{\multirow{6}{*}{$\alpha$=0.8}} & $E^0_h$ & 0.707731
& 0.481174
& 0.246009
& 0.145463
& 0.0573017
  \\ \cline{2-7} 
\multicolumn{1}{||l|}{}                  & $R^0$ & - & 1.34
& 1.65
& 1.83
& 1.92
 \\ \cline{2-7} 
\multicolumn{1}{||l|}{}                  & $E^1_h$ &  7.17293
& 6.01461
& 4.35323
& 3.36113
& 2.11709
  \\ \cline{2-7} 
\multicolumn{1}{||l|}{}                  & $R^1$ & - & 0.61
& 0.80
& 0.90
& 0.95
 \\ \cline{2-7} 
\multicolumn{1}{||l|}{}                  & $E^\infty_h$ & 1.67304
& 1.13317
& 0.597714
& 0.356602
& 0.140779
  \\ \cline{2-7} 
\multicolumn{1}{||l|}{}                  & $R^\infty$ & - & 1.35
& 1.58
& 1.80
& 1.91
   \\ 
\hline  
\end{tabular}
\caption{Error $E_h^p$ and rate of convergence $R^p, \; p=0,1,\infty,$ of the proposed method for Example~\ref{VariableceffPDE}.}\label{2DPDE1freefemf4}
\end{table}

\end{example}

\begin{example}\label{VariableceffPDEnewtime} Consider a two-dimensional time-fractional PDE (\ref{pide}) for $\Omega = (-1,1)^2$ with variable coefficients $\textbf{A}(\boldsymbol{x},t) =\begin{bmatrix} 
4 & x_1x_2t \\
x_1x_2t & 4 \\
\end{bmatrix},\;
\textbf{b}(\boldsymbol{x},t) =\begin{bmatrix} 
x_1 t \\
x_2 t\\
\end{bmatrix},\; c(\boldsymbol{x},t) = x_1 x_2 t^2, \;\lambda = 0\;$ $\forall \boldsymbol{x}=(x_1,\;x_2)\in\Omega,\;t\in(0,1]$. Let $u(\boldsymbol{x},t)= x_1(1-|x_1|)x_2(1-|x_2|)(1 + t^{\alpha})$ be the exact solution. The initial condition $u_0\notin \dot{H}^3$ and the source term $f$ are chosen accordingly. The computational results shown in Table~\ref{2dpdenew2freefemtimef4} are compatible with the theoretical findings.

\begin{table}[H]
\centering
\begin{tabular}{||l|l|l|l|l|l|l||}
\hline
\multicolumn{2}{||l|}{$N$} & 4 & 8 & 16 & 32 & 64  \\ \hline \hline
\multicolumn{1}{||l|}{\multirow{6}{*}{$\alpha$=0.2}} & $E^0_h$ & 0.02912
& 0.00922254
& 0.00256974
& 0.000762915
& 0.000224849
 \\ \cline{2-7} 
\multicolumn{1}{||l|}{}                  & $R^0$ & - & 1.86
& 1.95
& 1.99
& 1.99
 \\ \cline{2-7} 
\multicolumn{1}{||l|}{}                  & $E^1_h$ &  0.260953
& 0.146321
& 0.0772059
& 0.0422931
& 0.0228686
  \\ \cline{2-7} 
\multicolumn{1}{||l|}{}                  & $R^1$ & - & 0.93
& 0.98
& 0.99
& 1.00
  \\ \cline{2-7} 
\multicolumn{1}{||l|}{}                  & $E^\infty_h$ & 0.0388986
& 0.0138199
& 0.00410534
& 0.00126702
& 0.000374496
   \\ \cline{2-7} 
\multicolumn{1}{||l|}{}                  & $R^\infty$ & - & 1.67
& 1.86
& 1.93
& 1.99
  \\ \hline \hline
\multicolumn{1}{||l|}{\multirow{6}{*}{$\alpha$=0.5}} & $E^0_h$  & 0.0389191
& 0.0153385
& 0.00618916
& 0.00220748
& 0.000763096
  \\ \cline{2-7} 
\multicolumn{1}{||l|}{}                  & $R^0$ &  - & 1.82
& 1.93
& 1.97
& 1.99
 \\ \cline{2-7} 
\multicolumn{1}{||l|}{}   & $E^1_h$ & 0.30606
& 0.190638
& 0.120702
& 0.0715521
& 0.0422931
 \\ \cline{2-7} 
\multicolumn{1}{||l|}{}                  & $R^1$ &  - & 0.93
& 0.97
& 1.00
& 0.99
 \\ \cline{2-7} 
\multicolumn{1}{||l|}{}                  & $E^\infty_h$ & 0.0508273
& 0.0222693
& 0.00967891
& 0.00354648
& 0.00126859
   \\ \cline{2-7} 
\multicolumn{1}{||l|}{}                  & $R^\infty$ & - & 1.62
& 1.77
& 1.92
& 1.93
 \\ \hline \hline
\multicolumn{1}{||l|}{\multirow{6}{*}{$\alpha$=0.8}} & $E^0_h$ & 0.0513037
& 0.029182
& 0.0127102
& 0.00619403
& 0.0025724
\\ \cline{2-7} 
\multicolumn{1}{||l|}{}                  & $R^0$ & - &  1.68
& 1.84
& 1.92
& 1.97
  \\ \cline{2-7} 
\multicolumn{1}{||l|}{}                  & $E^1_h$ &  0.347801
& 0.260963
& 0.171709
& 0.120703
& 0.077206
 \\ \cline{2-7} 
\multicolumn{1}{||l|}{}                  & $R^1$ & - & 0.85
& 0.93
& 0.94
& 1.00
  \\ \cline{2-7} 
\multicolumn{1}{||l|}{}                  & $E^\infty_h$ & 0.0625247
& 0.0390078
& 0.0185925
& 0.00969024
& 0.00411551
  \\ \cline{2-7} 
\multicolumn{1}{||l|}{}                  & $R^\infty$ & - & 1.40
& 1.64
& 1.74
& 1.92
   \\ 
\hline 
\end{tabular}
\caption{Error $E_h^p$ and rate of convergence $R^p, \; p=0,1,\infty,$ of the proposed method for Example~\ref{VariableceffPDEnewtime}.}\label{2dpdenew2freefemtimef4}
\end{table}
\end{example}

%%%%%%%%%%%%%%%%%%%%%%%%%%%%%%%%%%%%%%%%%%%%%%%%%%%%%%%%%%%%%%%%%%%%%%%%%%%%%%%%%
%%%%%%%%%%%%%%%%%%%%%%%%%%%%%%%%%%%%%%%%%%%%%%%%%%%%%%%%%%%%%%%%%%%%%%%%%%%%%%%%%%

\begin{example}\label{VariableceffPIDEnew2_v2} For $J=(0,1]$ and $\Omega = (-1,1)^2$ the unit square with $\boldsymbol{x}=(x_1,x_2)$ and  $\boldsymbol{y}=(y_1,y_2)$, consider the problem (\ref{pide})
with the coefficients 
$\textbf{A}(\boldsymbol{x},t) =\begin{bmatrix} 
1 & \frac{x_1x_2t}{8} \\
\frac{x_1x_2t}{8} & 1 \\
\end{bmatrix},\;
\textbf{b}(\boldsymbol{x},t) =\begin{bmatrix} 
x_1 t \\
x_2 t\\
\end{bmatrix},\; c(\boldsymbol{x},t) = x_1 x_2 t, \;\lambda = \frac{1}{2},$ and $g(\boldsymbol{x},\boldsymbol{y}) = \frac{1}{2}e^{-\|\boldsymbol{x} - \boldsymbol{y}\|^2}$, initial condition $u_0(\boldsymbol{x})= x_1(1-|x_1|)x_2(1-|x_2|)$, and the source term $f(\boldsymbol{x},t)=e^{-t}\sin(\pi x_1)\sin(\pi x_2)$. Table \ref{2Dpidefreefemf} demonstrates that the computational rate of convergence is consistent with our theoretical findings.
\begin{table}[H]
\centering
\begin{tabular}{||l|l|l|l|l|l|l||}
\hline
\multicolumn{2}{||l|}{$N$} & 4 & 8 & 16 & 32 & 64  \\ \hline \hline
\multicolumn{1}{||l|}{\multirow{6}{*}{$\alpha$=0.2}} & $E^0_h$ & 217.449
& 0.678879
& 0.000826122
& 0.000246648
& 7.27521e-05
 \\ \cline{2-7} 
\multicolumn{1}{||l|}{}                  & $R^0$ & -
& 9.32
& 10.26
& 1.98
& 1.99
\\ \cline{2-7} 
\multicolumn{1}{||l|}{}                  & $E^1_h$ & 545.361
& 1.69333
& 0.0399092
& 0.0216531
& 0.0115618
\\ \cline{2-7} 
\multicolumn{1}{||l|}{}                  & $R^1$ & -
& 9.33
& 5.73
& 1.00
& 1.02
\\ \cline{2-7} 
\multicolumn{1}{||l|}{}                  & $E^\infty_h$ & 204.602
& 0.383045
& 0.00074878
& 0.00022527
& 6.64172e-05
  \\ \cline{2-7} 
\multicolumn{1}{||l|}{}                  & $R^\infty$ & -
& 10.15
& 9.54
& 1.97
& 1.99
  \\ \hline \hline
\multicolumn{1}{||l|}{\multirow{6}{*}{$\alpha$=0.5}} & $E^0_h$  & 0.0123586
& 0.00472535
& 0.00191004
& 0.000681553
& 0.000235916
 \\ \cline{2-7} 
\multicolumn{1}{||l|}{}                  & $R^0$ & -
& 1.88
& 1.93
& 1.97
& 1.99
  \\ \cline{2-7} 
\multicolumn{1}{||l|}{}   & $E^1_h$ & 0.151735
& 0.100101
& 0.06343
& 0.0369484
& 0.0216539
  \\ \cline{2-7} 
\multicolumn{1}{||l|}{}                  & $R^1$ & -
& 0.81
& 0.97
& 1.03
& 1.00
  \\ \cline{2-7} 
\multicolumn{1}{||l|}{}                  & $E^\infty_h$ & 0.00868891
& 0.00349092
& 0.00145498
& 0.000516097
& 0.000180741
  \\ \cline{2-7} 
\multicolumn{1}{||l|}{}                  & $R^\infty$ & -
& 1.79
& 1.86
& 1.98
& 1.97
  \\ \hline \hline
\multicolumn{1}{||l|}{\multirow{6}{*}{$\alpha$=0.8}} & $E^0_h$ & 0.0164057
& 0.00889068
& 0.00384336
& 0.0018742
& 0.000778252
 \\ \cline{2-7} 
\multicolumn{1}{||l|}{}                  & $R^0$ & -
& 1.82
& 1.86
& 1.92
& 1.97
 \\ \cline{2-7} 
\multicolumn{1}{||l|}{}                  & $E^1_h$ 
& 0.159741
& 0.137783
& 0.0906031
& 0.0633357
& 0.0399031
  \\ \cline{2-7} 
\multicolumn{1}{||l|}{}                  & $R^1$ & -
& 0.44
& 0.93
& 0.96
& 1.04
 \\ \cline{2-7} 
\multicolumn{1}{||l|}{}                  & $E^\infty_h$ & 0.0101306
& 0.00591297
& 0.00262284
& 0.00128834
& 0.000534778
 \\ \cline{2-7} 
\multicolumn{1}{||l|}{}                  & $R^\infty$ & -
& 1.60
& 1.80
& 1.90
& 1.97
  \\ 
\hline \hline\multicolumn{1}{||l|}{\multirow{6}{*}{$\alpha$=0.99}} & $E^0_h$ & 0.0162598
& 0.0118974
& 0.0056508
& 0.00331496
& 0.00169116
 \\ \cline{2-7} 
\multicolumn{1}{||l|}{}                  & $R^0$ & -
& 1.71
& 1.84
& 1.85
& 1.93
 \\ \cline{2-7} 
\multicolumn{1}{||l|}{}                  & $E^1_h$ 
& 0.144525
& 0.14955
& 0.107969
& 0.0836143
& 0.0587894
  \\ \cline{2-7} 
\multicolumn{1}{||l|}{}                  & $R^1$ & -
& -0.19
& 0.80
& 0.89
& 1.01
 \\ \cline{2-7} 
\multicolumn{1}{||l|}{}                  & $E^\infty_h$ & 0.00966815
& 0.00715519
& 0.00360673
& 0.00212418
& 0.0010907
 \\ \cline{2-7} 
\multicolumn{1}{||l|}{}                  & $R^\infty$ & -
& 1.65
& 1.69
& 1.84
& 1.91
  \\ 
\hline 
\end{tabular}
\caption{Error $E_h^p$ and rate of convergence $R^p, \; p=0,1,\infty,$ of the proposed method for Example~\ref{VariableceffPIDEnew2_v2}.}\label{2Dpidefreefemf}
\end{table}
\end{example}

%%%%%%%%%%%%%%%%%%%%%%%%%%%%%%%%%%%%%%%%%%%%%%%%%%%%%%%%%%%%%%%
%%%%%%%%%%%%%%%%%%%%%%%%%%%%%%%%%%%%%%%%%%%%%%%%%%%%%%%%%%%%%

\begin{example}\label{Mertons}  
The aim of this example is to verify the performance of the proposed IMEX-L1 method and the impact of time-graded mesh for the case of non-smooth initial data $u_0$. We price the European put option under the one-dimensional time-fractional Merton's jump-diffusion model (see \cite{MR2873249} for $\alpha\to 1^{-}$) where the truncated domain $\Omega = (-X, X)=(-1, 1)$. The problem is to find $u: \Omega \times J \rightarrow \mathbb{R}$ such that
\begin{align*}
\begin{cases}
&\partial^{\alpha}_t u - \frac{\sigma^2}{2}\frac{\partial ^2 u}{\partial x^2} - \left( r - \frac{\sigma^2}{2}- \lambda \kappa \right)\frac{\partial u}{\partial x} + (r+\lambda)u - \lambda \int_{\Omega} u( y,t)\rho(y-x)dy = \lambda R(x,t) \quad \text{in}\quad \Omega\times J,\\
& u(x,t) = u_B(x,t) \quad \text{on} \quad  \partial \Omega \times J,\\
& u(x,0) = u_0(x) \quad \forall x \in \Omega,
\end{cases}
\end{align*}
where $R(x,t)= \int_{\mathbb{R}/ \Omega} u(y,t)\rho(y-x)dy$ and $\kappa = \int_{\mathbb R}(e^x-1)\rho(x)dx$. Let the parameters in the model be
\begin{align*}
 & T = 1.0,\;\lambda = 0.10,\; K=100,\; \sigma = 0.15,\\
 &r=0.05,\; \sigma_J = 0.45, ~ \mu_J = -0.90, \;S_0 =  K. 
\end{align*}
 The jump density function $\rho(x)$ is given by $
\rho(x) = \displaystyle{\frac{1}{\sqrt{2\pi\sigma^2_J}}e^{-\frac{(x-\mu_J)^2}{2\sigma_J^2}}}$, the boundary condition $u_B(x,t)= \text{max}\{0, S_0e^x - Ke^{-rt}\}$, the initial condition $u_0(x) =\max\{0, S_0e^x - K\}$ is the payoff function,  and $R(x,t) = S_0e^{x+\mu_J+\frac{\sigma_J^2}{2}}\Phi\left(\frac{x-X+\mu_J +\sigma_J^2}{\sigma_J}\right) - Ke^{-rt}\Phi\left(\frac{x-X+\mu_J}{\sigma_J}\right) $, where  $\Phi(y)=\frac{1}{2\pi}\displaystyle{\int_{-\infty}^ye^{-\frac{x^2}{2}}dx}$ is the cumulative distribution of the standard normal density function. 

In this example, the error $E_{h}^m,\;m=0,1,$ has been calculated by using the formula
\begin{align}
    \nonumber & E_{h}^m:= 
        \max\limits_{1 \leq n \leq N}t^{\frac{\alpha}{2-m}}\|u_{h}^n - \widetilde{u}_h(t_n)\|_m.
\end{align}
Numerical results are shown in Table \ref{L2tableformertonf4}. It is observed that the computational rate of convergence  with respect to the above weighted norm is optimal even when the initial condition $u_0$ is not in $H_0^1(\Omega)\cap H^2(\Omega)$.

\begin{table}[H]
\centering
\begin{tabular}{||l|l|l|l|l|l|l||}
\hline
\multicolumn{2}{||l|}{$N$} & 4 & 8 & 16 & 32 & 64  \\ \hline \hline
\multicolumn{1}{||l|}{\multirow{6}{*}{$\alpha$=0.2}} & $E^0_h$ & 1.05581
& 0.00646041
& 0.00106787
& 0.000265719
& 7.59617e-05
 \\ \cline{2-7} 
\multicolumn{1}{||l|}{}                  & $R^0$ & -
& 8.23
& 2.75
& 2.28
& 2.04
 \\ \cline{2-7} 
\multicolumn{1}{||l|}{}                  & $E^1_h$ & 3.72878
& 0.107098
& 0.054879
& 0.02973
& 0.0161072
 \\ \cline{2-7} 
\multicolumn{1}{||l|}{}                  & $R^1$ & -
& 5.73
& 1.02
& 1.01
& 1.00
\\
\cline{2-7}
\multicolumn{1}{||l|}{}   & $E^\infty_h$ & 2.36107
& 0.0106703
& 0.00208492
& 0.000600877
& 0.000213871
   \\ \cline{2-7} 
\multicolumn{1}{||l|}{}  & $R^\infty$ & -
& 8.72
& 2.50
& 2.04
& 1.68
\\ 
\hline \hline
\multicolumn{1}{||l|}{\multirow{6}{*}{$\alpha$=0.5}} & $E^0_h$  & 0.0136039
& 0.0048126
& 0.00188982
& 0.000666165
& 0.000228864
  \\ \cline{2-7} 
\multicolumn{1}{||l|}{}                  & $R^0$ & -
& 2.03
& 1.99
& 1.99
& 2.01
 \\ \cline{2-7} 
\multicolumn{1}{||l|}{}   & $E^1_h$ & 0.218177
& 0.133309
& 0.0838583
& 0.0500168
& 0.0292883
 \\ \cline{2-7} 
\multicolumn{1}{||l|}{}                  & $R^1$ & -
& 0.96
& 0.99
& 0.99
& 1.00
\\
\cline{2-7}
\multicolumn{1}{||l|}{}   & $E^\infty_h$ & 0.0259708
& 0.0101445
& 0.0043731
& 0.00167432
& 0.000613846
   \\ \cline{2-7} 
\multicolumn{1}{||l|}{}  & $R^\infty$ & -
& 1.84
& 1.79
& 1.83
& 1.88
\\ \hline
\hline 
\multicolumn{1}{||l|}{\multirow{6}{*}{$\alpha$=0.8}} & $E^0_h$ & 0.0216922
& 0.0107296
& 0.00416085
& 0.00195343
& 0.000791354
 \\ \cline{2-7} 
\multicolumn{1}{||l|}{}  & $R^0$ & - &
2.09
& 2.10
& 2.02
& 2.02
  \\ \cline{2-7} 
\multicolumn{1}{||l|}{}   & $E^1_h$ & 0.276241
& 0.193256
& 0.121736
& 0.0831036
& 0.0532743
   \\ \cline{2-7} 
\multicolumn{1}{||l|}{}  & $R^1$ & -
& 1.06
& 1.02
& 1.02
& 1.00
\\
\cline{2-7}
\multicolumn{1}{||l|}{}   & $E^\infty_h$ & 0.0347808
& 0.0192111
& 0.00841226
& 0.00430264
& 0.00189286
   \\ \cline{2-7} 
\multicolumn{1}{||l|}{}  & $R^\infty$ & -
& 1.76
& 1.83
& 1.79
& 1.84
\\ \hline \hline
\multicolumn{1}{||l|}{\multirow{6}{*}{$\alpha$=0.99}} & $E^0_h$ & 0.0217925
& 0.0148037
& 0.00655057
& 0.00369525
& 0.00183329
 \\ \cline{2-7} 
\multicolumn{1}{||l|}{}  & $R^0$ & - &
2.12
& 2.01
& 1.99
& 2.01
  \\ \cline{2-7} 
\multicolumn{1}{||l|}{}   & $E^1_h$ & 0.270686
& 0.22184
& 0.147903
& 0.110727
& 0.0781255
   \\ \cline{2-7} 
\multicolumn{1}{||l|}{}  & $R^1$ & -
& 1.09
& 1.00
& 1.01
& 1.00
\\
\cline{2-7}
\multicolumn{1}{||l|}{}   & $E^\infty_h$ & 0.0348849
& 0.0254806
& 0.0117455
& 0.00700172
& 0.00368295
   \\ \cline{2-7} 
\multicolumn{1}{||l|}{}  & $R^\infty$ & -
& 1.72
& 1.91
& 1.80
& 1.84
\\ \hline
\end{tabular}
\caption{Error $E_h^p$ and rate of convergence $R^p, \; p=0,1,$ of the proposed method for one dimension Merton's jump-diffusion model in Example \ref{Mertons}. }\label{L2tableformertonf4}
\end{table}

\end{example}

\begin{remark}
    For $\alpha=0.2$, the grading parameter $\gamma = \frac{2(2-\alpha)}{\alpha}=18$, the maximum time step size is not small enough compared to T/2,  which can lead to large errors for $N =4$.
\end{remark}
%%%%%%%%%%%%%%%%%%%%%%%%%%%%%%%%%%
%%%%%%%%%%%%%%%%%%%%%%%%%%%%%%
%
\section{Conclusion}\label{section8}
%
%%%%%%%%%%%%%%%%%%%%%%%%%%%%%%%
A non-uniform implicit-explicit L1 finite element method (IMEX-L1-FEM) for a class of time-fractional partial differential/integro-differential equations is proposed and analyzed. To derive the stability and convergence estimates of the proposed method for the considered problem, we have proposed a modified discrete fractional Gr\"{o}nwall inequality. Up to a factor of $\log_e(N)$, optimal error estimates with respect to $L^{2}$- and $H^{1}$-norms are derived for the problem with initial data $u_0 \in H_0^1(\Omega)\cap H^2(\Omega)$. When the elliptic operator is self-adjoint, an $L^\infty$ error estimate is obtained for 2D problems. All the estimates derived in this article remains valid when $\alpha\to 1^{-}$. Furthermore, some numerical experiments are conducted, and the outcomes of these numerical experiments confirm our theoretical findings.

\subsection*{Acknowledgments} 
The first author gratefully acknowledges the support provided by the Indian Institute of Technology Goa, India. The second author acknowledges the support provided by the Indian Institute of Technology Goa, India, under the start-up grant project no. 2019/SG/LT/031. Prof. Olivier Pironneau is gratefully acknowledged for his valuable suggestions on the implementation of integro-differential equations in FreeFem++.

\bibliographystyle{plain}
\bibliography{references}

\appendix
\section{Proof of Theorem~\ref{Regularity condition} (Regularity Results)}\label{appendix_sec_reg_result_a}
Let $t_\ast\in(0, T]$ be an arbitrary point. Then for this $t_\ast$, the variational problem (\ref{variation1}) can be rewritten as-
\begin{align}
\label{variation1_reg}
&\begin{cases}
& u(0) = u_0 ,\\
& \partial_t^{\alpha} u(t) + A_0(t_\ast)u(t) = \left(A_0(t_\ast)-A_0(t)\right)u(t) + \left(\lambda \mathcal{I} - A_1(t)\right)u(t) +  f(t)  \quad \text{in }\;H^{-1}(\Omega) \quad a.e. \quad t \in J,
\end{cases}
\end{align}
where the operators $A_0(t)$ and $A_1(t)$, $t\in J$ are defined as 
\begin{align*}
    &  (A_0(t)\phi)(\boldsymbol{x}):=  -\nabla\cdot\boldsymbol{A}(\boldsymbol{x},t)\nabla \phi(\boldsymbol{x}) ,\quad \text{and}\quad(A_1(t)\psi)(\boldsymbol{x}) := \boldsymbol{b}(\boldsymbol{x},t)\cdot\nabla \psi(\boldsymbol{x}) + c(\boldsymbol{x},t)\psi(\boldsymbol{x}) \quad\forall \boldsymbol{x}\in\Omega.
\end{align*}
The solution of the above problem (\ref{variation1_reg}) can be represented by (see, \cite{MR4290515})
\begin{align}\label{solution_variation1}
u(t) &= F_\ast(t)u_0 + \int_0^t E_\ast(t-s)\big(\left(A_0(t_\ast)- A_0(s) + \lambda \mathcal{I} - A_1(s) \right)u(s) + f(s) \big)\;ds ,\quad t\in(0,T],
\end{align}
where
\begin{align*}
    F_\ast(t):= \frac{1}{2\pi i}\int_{\Gamma_{\theta, \delta}}e^{zt}z^{\alpha-1}(z^\alpha + A_0 (t_\ast))^{-1}dz \quad \text{and } \quad  E_\ast(t):= \frac{1}{2\pi i}\int_{\Gamma_{\theta, \delta}}e^{zt}(z^\alpha + A_0 (t_\ast))^{-1}dz
\end{align*}
with integral over a contour (oriented with an increasing imaginary part)
$$\Gamma_{\theta, \delta}:=\{z\in\mathbb{C}: |z|=\delta, |\arg{z}|\leq \theta   \} \cup \{z\in \mathbb{C}: z=\rho e^{\pm i\theta}, \rho \geq \delta, i:=\sqrt{-1} \}, \;\theta\in (\pi/2, \pi).$$
For any $s\geq 0$, the Hilbert space $\dot{H}^s(\Omega)$ equipped with the induced norm $$\|v\|_{\dot{H}^s(\Omega)} = \|(-\Delta)^{s/2} v\| = \sqrt{\sum_{j=1}^{\infty} \lambda_j^s (v,\; \phi_j)^2}$$ is defined by (see, \cite{MR2249024}, \cite{MR4290515}, and \cite{MR4125980})
\begin{align*}
    &\dot{H}^s
(\Omega) = \left\{v \in L^2(\Omega) : \sum_{j=1}^{\infty} \lambda_j^s (v,\; \phi_j)^2\;<\;\infty\right\},
\end{align*}
where $\{(\lambda_j,\;\phi_j)\}_{j=1}^{\infty}$ are the eigenpairs of the eigenvalue problem $-\Delta w = \lambda w\;\text{in}\;\Omega$, $w=0\;\text{on}\;\partial \Omega$, with multiplicity counted, and $\{\phi_j\}_{j=1}^{\infty}$ is an orthonormal basis for $L^2(\Omega)$. In particular, $\dot{H}^0
(\Omega)=L^2(\Omega)$, $\dot{H}^1
(\Omega)=H_0^1(\Omega)$, and $\dot{H}^2
(\Omega)=H_0^1(\Omega)\cap H^2(\Omega)$.
\begin{lemma}\label{propertiesofFandE}
The operators $E_\ast(t)$, $W_k(t):=t^kE_\ast(t)$, $F_\ast(t)$, and $A_0(t)$, $t\in J$, satisfy the following properties:
\begin{enumerate}
\item[(i)] $t^{k}\|A_0^{\theta}(t)E^{(k)}_\ast(t)\| + \|A_0^{\theta}(t)W_k^{(k)}(t)\|   \leq ct^{(1-\theta)\alpha -1} \quad \forall\theta\in[0,1]$, $k=0,1,2,\ldots,K$.
\item[(ii)] $\|A_0^{\theta}(t)W_k^{(k-1)}(t)\|   \leq ct^{(1-\theta)\alpha} \quad \forall\theta\in[0,1]$, $k=0,1,2,\ldots,K$.
\item[(iii)] $I-F_\ast(t)=\int_0^t A_0(t_\ast)E_\ast (s)ds,\;$ $\;t^{k}\|A_0^{\theta}(t)F^{(k)}_\ast(t)\|  \leq ct^{-\theta\alpha}\;$  and $\;t^{k+1}\|A_0^{-\theta}(t)F^{(k+1)}_\ast(t)\|  \leq ct^{\theta\alpha} \\ \forall\theta\in[0,1]$, $k=0,1,2,\ldots,K$.
\item[(iv)] Let $\left\|\boldsymbol{A}(\cdot,t)\right\|_{W^{1,\infty}(\Omega,\mathbb{R}^{d\times d})} + \left\|\frac{d}{dt}\boldsymbol{A}(\cdot,t)\right\|_{W^{1,\infty}(\Omega,\mathbb{R}^{d\times d})}\;\leq\; D\;  \forall t\in J.$ Then $$\|(A_0(t)-A_0(s))v\|\leq c|t-s| \|v\|_{\dot{H}^2(\Omega)}\;\forall v\in\dot{H}^2(\Omega).$$
\item[(v)] $c_1 \|\phi\|_{\dot{H}^{2\theta}} \leq \|A_0^\theta(t)\phi\| \leq c_2 \|\phi\|_{\dot{H}^{2\theta}} \;\forall \phi\in \dot{H}^{2\theta},\; \theta \in [0,1]$,
\end{enumerate}
for some positive constants $c$, $D$, $c_1$, and $c_2$ which are independent of $\theta$ and $t$, where  $\psi^{(-1)}(t):=\int_0^t \psi(s)ds$, $\displaystyle \psi^{(k)}(t):=\frac{d^k}{dt^k}\psi(t)$, and the constant $c$ may depend on the positive integer $K$.
\end{lemma}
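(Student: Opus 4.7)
The plan is to exploit the contour integral representations of $F_\ast(t)$ and $E_\ast(t)$ together with the resolvent estimate
\[
\|A_0^\theta(t_\ast)(z^\alpha + A_0(t_\ast))^{-1}\| \leq C|z|^{-(1-\theta)\alpha}, \quad z\in\Gamma_{\theta,\delta},\; \theta\in[0,1],
\]
which follows from spectral calculus since $A_0(t_\ast)$ is self-adjoint and positive with spectrum in $[\lambda_1,\infty)$ for some $\lambda_1>0$, while $z^\alpha$ remains in a sector strictly separated from the negative real axis along $\Gamma_{\theta,\delta}$ for $\theta\in(\pi/2,\pi)$. Claim (i) follows by differentiating $E_\ast$ under the integral to obtain $E_\ast^{(k)}(t)=\frac{1}{2\pi i}\int_{\Gamma_{\theta,\delta}}e^{zt}z^k(z^\alpha+A_0(t_\ast))^{-1}\,dz$, applying the resolvent bound, and optimizing over the contour radius by taking $\delta\sim 1/t$; standard arc/ray estimates then yield $\|A_0^\theta(t_\ast)E_\ast^{(k)}(t)\|\leq C\,t^{(1-\theta)\alpha-1-k}$. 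Leibniz's rule applied to $W_k(t)=t^kE_\ast(t)$ writes $W_k^{(k)}(t)=\sum_{j=0}^{k}\binom{k}{j}\frac{k!}{(k-j)!}\,t^{k-j}E_\ast^{(k-j)}(t)$, and every term obeys the target bound, proving (i); claim (ii) comes from the same Leibniz expansion with one fewer derivative, which boosts the $t$-exponent by one.

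For the identity in (iii), differentiating $F_\ast$ and using $z^{\alpha}(z^\alpha+A_0)^{-1}=I-A_0(z^\alpha+A_0)^{-1}$ together with the vanishing contour integral $\frac{1}{2\pi i}\int_{\Gamma_{\theta,\delta}}e^{zt}\,dz=0$ (by deformation to the left half plane) gives $F_\ast'(t)=-A_0(t_\ast)E_\ast(t)$; coupling this with the boundary value $\lim_{t\to 0^+}F_\ast(t)=I$, obtained via the rescaling $z\mapsto z/t$ and Cauchy's formula $\frac{1}{2\pi i}\int_\Gamma e^w w^{-1}\,dw=1$, yields the integral identity. The first norm bound in (iii) is derived exactly as in (i) from $F_\ast^{(k)}(t)=\frac{1}{2\pi i}\int e^{zt}z^{\alpha+k-1}(z^\alpha+A_0)^{-1}\,dz$. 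For the negative-power bound, the cleanest route is to establish the endpoint $\theta=1$ case via the resolvent identity $A_0^{-1}(z^\alpha+A_0)^{-1}=z^{-\alpha}[A_0^{-1}-(z^\alpha+A_0)^{-1}]$, whose first piece contributes nothing to the contour integral (being multiplied by $e^{zt}z^{m}$ for $m\geq 0$), leaving a sharper integrand; interpolation with the $\theta=0$ bound via the spectral theorem then gives $t^{k+1}\|A_0^{-\theta}F_\ast^{(k+1)}(t)\|\leq Ct^{\alpha\theta}$.

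For (iv), expand $(A_0(t)-A_0(s))v=-\nabla\cdot\bigl((\boldsymbol{A}(\cdot,t)-\boldsymbol{A}(\cdot,s))\nabla v\bigr)$ into pieces involving $\nabla(\boldsymbol{A}(\cdot,t)-\boldsymbol{A}(\cdot,s))$ and $(\boldsymbol{A}(\cdot,t)-\boldsymbol{A}(\cdot,s))\nabla^2v$; the mean value theorem combined with $\|\tfrac{d}{dt}\boldsymbol{A}(\cdot,t)\|_{W^{1,\infty}}\leq D$ yields $\|\boldsymbol{A}(\cdot,t)-\boldsymbol{A}(\cdot,s)\|_{W^{1,\infty}}\leq D|t-s|$, and hence $\|(A_0(t)-A_0(s))v\|\leq C|t-s|\|v\|_{H^2}$, which is converted to $\|v\|_{\dot{H}^2(\Omega)}$ by elliptic regularity. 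For (v), the ellipticity \eqref{supdc} gives $\|A_0^{1/2}(t)v\|^2=(\boldsymbol{A}(\cdot,t)\nabla v,\nabla v)$ equivalent to $\|v\|_{\dot{H}^1}^2$, while $\|A_0(t)v\|$ is equivalent to $\|v\|_{\dot{H}^2}$ by elliptic regularity and the $W^{1,\infty}$ bound on $\boldsymbol{A}$; the general $\theta\in[0,1]$ case follows by spectral-theoretic interpolation applied to the self-adjoint operator $A_0(t)$. The principal difficulty is the negative-power bound in (iii): the naive spectral estimate for $\|A_0^{-\theta}(z^\alpha+A_0)^{-1}\|$ is not uniformly $C|z|^{-\alpha(1+\theta)}$ across the whole contour, so one must either interpolate through the endpoint $\theta=1$ (where the resolvent identity produces a clean $z^{-\alpha}$ factor) or split the contour according to whether $|z|^\alpha$ lies above or below the spectral gap $\lambda_1$ of $A_0$.
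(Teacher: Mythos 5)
Your proposal is correct, but it is worth noting that the paper does not actually prove this lemma: it simply cites Theorem~6.4, Lemma~6.2 and Lemma~6.5 of the reference [MR4290515] together with interpolation, and dismisses $(v)$ as following from the definition of $A_0(t)$. What you have written is a sound reconstruction of the standard arguments that sit behind those citations: the contour representation with the resolvent bound $\|A_0^{\theta}(t_\ast)(z^{\alpha}+A_0(t_\ast))^{-1}\|\leq C|z|^{-(1-\theta)\alpha}$, the rescaling $\delta\sim 1/t$, Leibniz's rule for $W_k$, the identity $z^{\alpha}(z^{\alpha}+A_0)^{-1}=I-A_0(z^{\alpha}+A_0)^{-1}$ for part $(iii)$, and the moment/Heinz-type interpolation between the endpoints $\theta=0$ and $\theta=1$ for the negative-power bound and for $(v)$. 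Your diagnosis of the main technical obstruction in $(iii)$ --- that $\|A_0^{-\theta}(z^{\alpha}+A_0)^{-1}\|$ is not uniformly $O(|z|^{-\alpha(1+\theta)})$ near the origin of the contour, so one must pass through the $\theta=1$ endpoint via the resolvent identity and interpolate --- is exactly right and is the point a naive estimate would miss. Two small remarks. First, in $(ii)$ the case $k=0$, $\theta=1$ cannot be obtained by integrating the bound from $(i)$ (the integral $\int_0^t s^{-1}\,ds$ diverges); it requires the identity $\int_0^t A_0(t_\ast)E_\ast(s)\,ds=I-F_\ast(t)$ from $(iii)$, so the logical order of your argument should make $(iii)$ available there. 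Second, the lemma states the bounds with $A_0^{\theta}(t)$ at the same $t$ as the argument of $E_\ast$, $F_\ast$, whereas your resolvent estimates naturally produce $A_0^{\theta}(t_\ast)$; this is harmless precisely because of the $t$-uniform norm equivalence in $(v)$ (which itself rests on (\ref{supdc}), the $W^{1,\infty}$ bound on $\boldsymbol{A}$, and elliptic regularity on the convex domain), but it deserves a sentence. Neither point is a gap in substance.
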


\begin{proof}
    An appeal to Theorem~6.4 in \cite{MR4290515} (see, page 190), Lemma~6.2 in \cite{MR4290515} (see, page~189), Lemma~6.5 in \cite{MR4290515} (see, page~211), and interpolation yields $(i)$, $(ii)$, $(iii)$, and $(iv)$. $(v)$ follows from the definition of $A_0(t)$ (see, \cite{MR2249024}).
\end{proof}
\begin{lemma}{[\cite{MR4290515}, Lemma~6.7]}\label{recurrencerelations_of_f}
For $f\in W^{k+1,1}(J;L^2(\Omega)), \;k=0,1,2,\ldots,K$, and $\theta \in [0,1]$, there exists a positive constant $C_K$ such that
$$\left\|A_0^\theta(t_\ast)\frac{d^k}{dt^k}\left(t^k\int_0^t E_\ast(t-s)f(s)\;ds\right)\Big|_{t=t_\ast}\right\|\leq C_K t_\ast^{(1-\theta)\alpha}\|f\|_{W^{k+1,1}(J;L^2(\Omega))}\quad \forall k = 0,1,2,\ldots,K.$$
\end{lemma}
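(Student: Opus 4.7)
The plan is to reduce the differentiation in $\frac{d^k}{dt^k}\bigl(t^k\int_0^t E_\ast(t-s)f(s)\,ds\bigr)$ to a finite sum of terms whose sizes are controlled by the kernel bounds of Lemma~\ref{propertiesofFandE}. The algebraic starting point is the binomial identity $t^k=((t-s)+s)^k$, which produces the decomposition
\begin{align*}
t^k\int_0^t E_\ast(t-s)f(s)\,ds \;=\; \sum_{j=0}^{k}\binom{k}{j}\int_0^t W_{k-j}(t-s)\,h_j(s)\,ds, \qquad h_j(s):=s^j f(s).
\end{align*}
This replaces the single convolution with the highly singular kernel $E_\ast$ by $k+1$ convolutions whose kernels $W_\ell$, $0\leq\ell\leq k$, are precisely those for which pointwise derivative bounds are tabulated in Lemma~\ref{propertiesofFandE}(i)--(ii).

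Next, I would differentiate each piece $k$ times via the convolution Leibniz rule that places every derivative on $h_j$:
\begin{align*}
\frac{d^k}{dt^k}\int_0^t W_{k-j}(t-s)h_j(s)\,ds \;=\; \sum_{i=0}^{k-1} W_{k-j}^{(k-1-i)}(t)\,h_j^{(i)}(0) \;+\; \int_0^t W_{k-j}(t-s)\,h_j^{(k)}(s)\,ds.
\end{align*}
The hypothesis $f\in W^{k+1,1}(J;L^2(\Omega))$ together with the Sobolev embedding $W^{k+1,1}(J;L^2)\hookrightarrow C^k(\bar J;L^2)$ gives the pointwise values $f^{(m)}(0)$ for $m\leq k$ and the uniform bound $\sup_{s\in\bar J}\|h_j^{(m)}(s)\|+\|h_j^{(k)}\|_{L^1(J;L^2)}\leq C\|f\|_{W^{k+1,1}(J;L^2)}$. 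The decisive algebraic fact $h_j^{(i)}(0)=0$ for every $i<j$ kills exactly those boundary terms for which $q:=k-1-i$ exceeds $\ell-1=k-j-1$; the survivors have $q\leq \ell-1$, so that integrating Lemma~\ref{propertiesofFandE}(ii) upward yields $\|A_0^\theta(t_\ast)W_{k-j}^{(q)}(t)\|\leq c\,t^{(1-\theta)\alpha+\ell-1-q}$, and collecting exponents produces a boundary contribution of size $CT^{i-j}\,t_\ast^{(1-\theta)\alpha}\|f\|_{W^{k+1,1}(J;L^2)}$.

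For the surviving integral, the nontrivial case is $j=k$, where $W_0=E_\ast$: here the boundary terms vanish automatically and one is left with $\int_0^{t_\ast}E_\ast(t_\ast-s)h_k^{(k)}(s)\,ds$ carrying the singular kernel estimate $\|A_0^\theta(t_\ast)E_\ast(t_\ast-s)\|\leq c(t_\ast-s)^{(1-\theta)\alpha-1}$ from Lemma~\ref{propertiesofFandE}(i); this singularity is just integrable, and combined with the uniform bound on $\|h_k^{(k)}(\cdot)\|$ it delivers the claimed factor $C\,t_\ast^{(1-\theta)\alpha}\|f\|_{W^{k+1,1}(J;L^2)}$. Assembling all $k+1$ terms in $j$ yields the lemma directly for every $\theta\in[0,1]$; alternatively, one may first establish the endpoints $\theta=0,1$ (using the resolvent identity $A_0(t_\ast)(z^\alpha+A_0(t_\ast))^{-1}=I-z^\alpha(z^\alpha+A_0(t_\ast))^{-1}$ inside the contour defining $E_\ast$ for $\theta=1$) and then recover intermediate $\theta$ by real interpolation between $L^2(\Omega)$ and $\dot{H}^2(\Omega)$, which is the interpolation step alluded to in the statement.

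The main obstacle is the combinatorial bookkeeping that matches the vanishing order $j$ of $h_j$ at the origin with the regularity index $\ell=k-j$ of the kernel $W_\ell$: one must simultaneously verify that every boundary term with $q\geq \ell$ is eliminated by $h_j^{(i)}(0)=0$ (otherwise $\|W_\ell^{(q)}(0)\|$ for $q\geq \ell$ would be required, which is infinite), and that the one genuinely singular integral (the case $j=k$) can only be absorbed at the price of one extra time derivative of $f$---which is exactly why the hypothesis demands $W^{k+1,1}$ rather than $W^{k,1}$ regularity.
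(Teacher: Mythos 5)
Your proposal is correct and reconstructs essentially the argument that the paper delegates to Lemma~6.7 of \cite{MR4290515} "and interpolation": the binomial splitting $t^k=((t-s)+s)^k$ into convolutions with the kernels $W_\ell$, the Leibniz rule placing all $k$ derivatives on $h_j(s)=s^jf(s)$, the cancellation $h_j^{(i)}(0)=0$ for $i<j$ that keeps every surviving kernel derivative at order $q\leq \ell-1$ where Lemma~\ref{propertiesofFandE} applies, and the absorption of the single singular convolution using the extra derivative afforded by $W^{k+1,1}$. The one point to make explicit is that your main-line bound for the $j=k$ term, $\|A_0^{\theta}(t_\ast)E_\ast(t_\ast-s)\|\leq c(t_\ast-s)^{(1-\theta)\alpha-1}$, is integrable only for $\theta<1$ (and degenerates as $\theta\to 1^-$), so the route you offer as an alternative --- treat $\theta=1$ separately via $A_0(t_\ast)E_\ast(t)=-F_\ast'(t)$ from Lemma~\ref{propertiesofFandE}(iii) and an integration by parts onto $h_k^{(k+1)}$, then interpolate between $L^2(\Omega)$ and $\dot{H}^{2}(\Omega)$ --- is in fact required, not optional, to cover the full range $\theta\in[0,1]$.
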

\begin{lemma}{[\cite{MR4290515}, Theorem~4.2]}
    \label{fractionalgronwall}
    Let $\beta>0$ and $a(t)=at^{-\alpha}\in L^1_+((0,T])$ with $\alpha \in (0,1),\;a>0$ and $0 \leq b =b(t) \in C([0,T])$ be non-decreasing. Let $u(t)\in L^1_+((0,T]))$ satisfy
    \begin{align*}
        u(t) \leq a(t) + \frac{b}{\Gamma(\beta)}\int_0^t(t-s)^{\beta-1}u(s)\;ds, \quad t \in (0,T].
    \end{align*}
    Then, there holds
    \begin{align*}
     u(t) \leq a \Gamma(1-\alpha)E_{\beta,1-\alpha}(bt^\beta)t^{-\alpha} \quad \text{on} \; (0,T]. 
    \end{align*}
\end{lemma}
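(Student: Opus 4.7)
The plan is to adapt the technique in the proof of Lemma~2.2 of \cite{MR3790081}, modifying it to accommodate the mixed source terms $v^{n}\xi^{n}$, $(\eta^{n})^{2}$, and $(\zeta^{n})^{2}$ on the right-hand side of (\ref{dfgi1}). The central device is to invert the discrete fractional derivative through the complementary kernel. I would fix $n'\in\{1,\ldots,n\}$, multiply the hypothesis (\ref{dfgi1}) at temporal index $j$ by $P_{\alpha}^{n',j}$, and sum from $j=1$ to $n'$. Exchanging the order of summation on the left and invoking the orthogonality identity $\sum_{j=i}^{n'}P_{\alpha}^{n',j}K_{1-\alpha}^{j,i}=1$ from Lemma~\ref{discretekernelproperties}(b) telescopes $\sum_{j=1}^{n'}P_{\alpha}^{n',j}D_{t_{j}}^{\alpha}(v^{j})^{2}$ into $(v^{n'})^{2}-(v^{0})^{2}$, producing
\[
(v^{n'})^{2}\leq (v^{0})^{2}+\sum_{j=1}^{n'}P_{\alpha}^{n',j}\Bigl[\,\sum_{i=0}^{j}\lambda_{j-i}^{j}(v^{i})^{2}+v^{j}\xi^{j}+(\eta^{j})^{2}+(\zeta^{j})^{2}\Bigr].
\]

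Next, set $V^{n}:=\max_{0\leq k\leq n}v^{k}$ and pick $n^{*}\in\{0,\ldots,n\}$ with $v^{n^{*}}=V^{n}$; the case $n^{*}=0$ is trivial, so specialize the previous inequality to $n'=n^{*}$. Using $\sum_{i=0}^{j}\lambda_{j-i}^{j}\leq\Lambda$ together with the crude bound $(v^{i})^{2}\leq (V^{j})^{2}$ controls the $\lambda$-sum by $\Lambda(V^{j})^{2}$; bounding $v^{j}\leq V^{n}$ factors $V^{n}$ out of the linear $\xi$ contribution; the estimate $\sum_{j}P_{\alpha}^{n^{*},j}\leq t_{n}^{\alpha}/\Gamma(1+\alpha)\leq 2t_{n}^{\alpha}$, obtained from Lemma~\ref{discretekernelproperties}(c) with $j=1$, converts $\sum_{j}P_{\alpha}^{n^{*},j}(\eta^{j})^{2}$ into $2t_{n}^{\alpha}(\bar\eta^{n})^{2}$ where $\bar\eta^{n}:=\max_{j\leq n}\eta^{j}$; and the $(\zeta^{j})^{2}$ sum is subsumed into $(\bar\zeta^{n})^{2}:=\max_{k\leq n}\sum_{i=1}^{k}P_{\alpha}^{k,i}(\zeta^{i})^{2}$. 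The key absorption step isolates the diagonal $j=n^{*}$ contribution: by Lemma~\ref{discretekernelproperties}(a), $P_{\alpha}^{n^{*},n^{*}}\leq \Gamma(2-\alpha)\Delta t_{n^{*}}^{\alpha}$, so the step-size restriction (\ref{timecondition}) forces $\Lambda P_{\alpha}^{n^{*},n^{*}}\leq 1/2$ and the term $\Lambda P_{\alpha}^{n^{*},n^{*}}(V^{n})^{2}$ can be moved to the left; a Young inequality $V^{n}\bar\xi^{n}\leq \tfrac{1}{4}(V^{n})^{2}+(\bar\xi^{n})^{2}$, with $\bar\xi^{n}:=\max_{k\leq n}\sum_{i=1}^{k}P_{\alpha}^{k,i}\xi^{i}$, then absorbs the linear contribution. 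What remains is an inequality of the form
\[
(V^{n})^{2}\leq C\Bigl[(v^{0})^{2}+(\bar\xi^{n})^{2}+t_{n}^{\alpha}(\bar\eta^{n})^{2}+(\bar\zeta^{n})^{2}\Bigr]+2\Lambda\sum_{j=1}^{n-1}P_{\alpha}^{n^{*},j}(V^{j})^{2}.
\]

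To close the argument I would induct on $n$, combining the previous inequality with the discrete Mittag--Leffler identity in Lemma~\ref{discretekernelproperties}(d): under the monotone-mesh assumption $\Delta t_{n-1}\leq\Delta t_{n}$, this lemma yields $\mu\sum_{j=1}^{n-1}P_{\alpha}^{n,j}E_{\alpha}(\mu t_{j}^{\alpha})\leq E_{\alpha}(\mu t_{n}^{\alpha})-1$. Substituting the inductive bound $V^{j}\leq F^{n-1}E_{\alpha}(2\Lambda t_{j}^{\alpha})$ (where $F^{n-1}$ denotes the data-dependent bracket) into the residual sum and taking $\mu=2\Lambda$ feeds the Mittag--Leffler factor back into itself and generates the prefactor $E_{\alpha}(2\Lambda t_{n}^{\alpha})$; taking square roots and using $v^{n}\leq V^{n}$ then yields (\ref{dfgi2}) with the advertised constants. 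The principal obstacle is the diagonal absorption: the $j=n^{*}$ term of the $\lambda$-sum is the only point at which the time-step bound (\ref{timecondition}) is invoked, and the precise balance between $\Lambda$, $\Gamma(2-\alpha)$ and $\Delta t^{\alpha}$ recorded in (\ref{timecondition}) is dictated exactly by the need to keep this coefficient strictly smaller than $1$; a secondary subtlety is that the $n$-dependence of $P_{\alpha}^{n^{*},j}$ in the residual sum requires a monotonicity argument on the complementary kernels in passing from the hypothesis to the inductive step.
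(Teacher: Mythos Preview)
Your proposal is addressed to the wrong statement. The lemma you were asked to prove, Lemma~\ref{fractionalgronwall}, is a \emph{continuous} fractional Gr\"{o}nwall inequality: the hypothesis is the integral inequality
\[
u(t)\leq a t^{-\alpha}+\frac{b}{\Gamma(\beta)}\int_{0}^{t}(t-s)^{\beta-1}u(s)\,ds,
\]
and the conclusion bounds $u(t)$ by $a\Gamma(1-\alpha)E_{\beta,1-\alpha}(bt^{\beta})t^{-\alpha}$. None of the objects appearing in your argument---the discrete sequences $v^{n},\xi^{n},\eta^{n},\zeta^{n}$, the discrete fractional derivative $D_{t_{n}}^{\alpha}$, the complementary kernels $P_{\alpha}^{n,j}$, the reference (\ref{dfgi1}), or Lemma~\ref{discretekernelproperties}---are present in the statement of Lemma~\ref{fractionalgronwall}. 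What you have sketched is instead a proof of Theorem~\ref{DFGI}, the \emph{discrete} fractional Gr\"{o}nwall inequality; indeed, (\ref{dfgi1}) is its hypothesis and (\ref{dfgi2}) its conclusion.

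For the record, the paper does not supply a proof of Lemma~\ref{fractionalgronwall} at all: it is quoted verbatim from Theorem~4.2 of \cite{MR4290515}. A direct proof would proceed by iterating the integral inequality (Picard iteration of the Volterra operator with kernel $\frac{b}{\Gamma(\beta)}(t-s)^{\beta-1}$), using the Beta-function identity to evaluate the successive convolutions of $s^{-\alpha}$ with $(t-s)^{\beta-1}$, and recognising the resulting power series as the two-parameter Mittag--Leffler function $E_{\beta,1-\alpha}$. Your discrete-kernel machinery is irrelevant to that argument. If you intended your write-up as a proof of Theorem~\ref{DFGI}, it is broadly along the lines of the paper's Appendix~\ref{appendix_sec_gronwall_b}, but you should resubmit it against that statement.
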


\begin{theorem}\label{reg_results_a1} Let $f \in W^{k+1,1}(J;L^2(\Omega)),\;k=0,1,2,\ldots, K$, $\|\mathcal{I}\phi\|_s\;\leq\;C_0\|\phi\|_s\; \forall \phi\in \dot{H}^s(\Omega),\;0\leq s \leq 1,$ and for $k=0,1,2,\ldots,K+1,$
\begin{align*}
        & \left\|\frac{d^{k}}{dt^{k}}\boldsymbol{A}(\cdot,t)\right\|_{W^{1,\infty}(\Omega,\mathbb{R}^{d\times d})}+\left\|\frac{d^k}{dt^k}\boldsymbol{b}(\cdot,t)\right\|_{L^{\infty}(\Omega,\mathbb{R}^{d})} +\left\|\frac{d^k}{dt^k}c(\cdot,t)\right\|_{L^{\infty}(\Omega)}\;\leq\; C_1 \quad  \forall t\in J,
\end{align*}
for some positive constants $C_0$ and $C_1$. Then there exists a positive constant $C=C(C_0,C_1, T, K, \alpha)$ depending on $C_0$, $C_1$, $T$, $K$, and $\alpha$ such that
\begin{align*}
    &\|t^k u^{(k)}(t)\|_{\dot{H}^{2}(\Omega)}\leq C \left(\|u_0\|_{\dot{H}^{2}(\Omega)}+\|f\|_{W^{k+1,1}(J;L^2(\Omega))}\right),\;k=0,1,2,\ldots,K.
\end{align*}
\end{theorem}
\begin{proof}
It is enough to prove the following result, for $n=0,1,2,\ldots, K$,
\begin{align}\label{inductionresult}
    &\|(t^n u(t))^{(n)}\|_{\dot{H}^{2}(\Omega)}\leq C \left(\|u_0\|_{\dot{H}^{2}(\Omega)}+\|f\|_{W^{n+1,1}(J;L^2(\Omega))}\right), \quad t\in J.
\end{align}
We derive this result (\ref{inductionresult}) via mathematical induction. For $n=0 $, and $t_\ast \in(0,T],$ operate $A_0(t_\ast)$ on both the sides of (\ref{solution_variation1}) to arrive at
\begin{align*}
A_0(t_\ast)u(t) =&\; A_0(t_\ast)F_\ast(t)u_0 + \int_0^t A_0(t_\ast)E_\ast(t-s) f(s)\;ds+ \int_0^t A_0(t_\ast)E_\ast(t-s)\left(A_0(t_\ast)- A_0(s) \right)u(s)\;ds\\
&\;+ \int_0^t A_0(t_\ast)E_\ast(t-s)\left(\lambda \mathcal{I}-A_1(s) \right)u(s)\;ds.
\end{align*}
Now, an appeal to Lemma~\ref{propertiesofFandE} $(i)$, $(iii)$, $(iv)$ and $(v)$ yields the following estimate
\begin{align*}
    \|u(t)\|_{\dot{H}^{2}(\Omega)}\leq &\; c_2\Big(\|A_0(t_\ast)F_\ast(t)u_0\| + \left\|\int_0^t A_0(t_\ast)E_\ast(t-s) f(s)\;ds\right\|\\
&+ \int_0^t \|A_0(t_\ast)E_\ast(t-s)\left(A_0(t_\ast)- A_0(s) \right)u(s)\|ds\\
&+ \int_0^t \|A_0^{1-\frac{\epsilon}{2}}(t_\ast)E_\ast(t-s)A_0^{\frac{\epsilon}{2}}(t_\ast)\left(\lambda \mathcal{I}-A_1(s) \right)u(s)\|ds\Big),\;0<\epsilon <0.5\\
   \leq &\;C\Big( \|u_0\|_{\dot{H}^{2}(\Omega)}+  \left\|\int_0^t A_0(t_\ast)E_\ast(t-s) f(s)\;ds\right\| + \int_0^t(t-s)^{-1}(t_\ast -s)\|u(s)\|_{\dot{H}^{2}(\Omega)}ds\\
    &+ \int_0^t(t-s)^{\epsilon\frac{\alpha}{2}-1}\|\left(\lambda \mathcal{I}-A_1(s) \right)u(s)\|_{H^{\epsilon}(\Omega)}ds\Big),\;0<\epsilon <0.5,
\end{align*}
and then taking $t=t_\ast$, $\epsilon = 1/4$, applying  $\|\left(\lambda \mathcal{I}-A_1(s) \right)v\|_{H^{1/4}(\Omega)}\leq C\|v\|_{H^{2}(\Omega)}$ and Lemma~\ref{recurrencerelations_of_f}, we obtain
\begin{align*}
    \|u(t_\ast)\|_{\dot{H}^{2}(\Omega)}
   \leq & \;C\Big( \|u_0\|_{\dot{H}^{2}(\Omega)}+\|f\|_{W^{1,1}(J;L^2(\Omega))} + \int_0^{t_\ast}\|u(s)\|_{\dot{H}^{2}(\Omega)}ds + \int_0^{t_\ast}({t_\ast}-s)^{\frac{\alpha}{8}-1}\|u(s)\|_{\dot{H}^{2}(\Omega)}ds\Big).
\end{align*}
Thus, an appeal to the Lemma~\ref{fractionalgronwall} (Gr\"{o}nwall inequality) yields
\begin{align*}
    \|u(t_\ast)\|_{\dot{H}^{2}(\Omega)}
   \leq &\;C\Big( \|u_0\|_{\dot{H}^{2}(\Omega)}+\|f\|_{W^{1,1}(J;L^2(\Omega))} \Big).
\end{align*}
As $t_\ast\in(0, T]$ is arbitrary, it follows the result for $k=0$. Now, assume that the result (\ref{inductionresult}) holds for $n=0,1,\ldots,k-1 <K$. After multiplying by $t^k$ on both the sides of (\ref{solution_variation1}), consider the $k^{th}$ derivative of the resulting equation
 \begin{align*}
    \frac{d^k}{dt^k}(t^ku(t))=\;& \frac{d^k}{dt^k}(t^kF_\ast(t))u_0 + \frac{d^k}{dt^k}\left(t^k\int_0^t E_\ast(t-s)f(s)\;ds\right) \\
    &+ \frac{d^k}{dt^k} \left(t^k\int_0^t E_\ast(t-s)\big(A_0(t_\ast)- A_0(s) + \lambda \mathcal{I}-A_1(s)\big)u(s)\;ds\right).
\end{align*}
Apply $\displaystyle t^k=\sum_{m=0}^k\binom{k}{m} (t-s)^m s^{k-m},\;0<s\leq t$, and then use the change of variables with product rule of differentiation to find that
\begin{align}\label{A4ideqn}
    \nonumber\frac{d^k}{dt^k}(t^ku(t))=\;& \frac{d^k}{dt^k}(t^kF_\ast(t))u_0 + \frac{d^k}{dt^k}\left(t^k\int_0^t E_\ast(t-s)f(s)\;ds\right) \\
    &+\sum_{m=0}^k\binom{k}{m}\sum_{n=0}^{k-m}\binom{k-m}{n}\int_0^t W_m^{(m)}(t-s)\big(A_0(t_\ast)- A_0(s) + \lambda \mathcal{I}-A_1(s)\big)^{(n)}v_{k-m}^{(k-m-n)}(s)\;ds,  
\end{align}
where $W_m(t) := t^mE_\ast(t)$ and $v_m(t):= t^mu(t)$, $t\in(0,T]$.
Operate $A_0(t_\ast)$ on both the sides of equation (\ref{A4ideqn}), and then apply Lemma~\ref{propertiesofFandE} $(iii)$ and $(v)$ to obtain
\begin{align}\label{A5ideqn}
    \nonumber\left\|\frac{d^k}{dt^k}(t^ku(t))\right\|_{\dot{H}^2(\Omega)}\leq\;& c\|u_0\|_{\dot{H}^2(\Omega)} + \left\|A_0(t_\ast)\frac{d^k}{dt^k}\left(t^k\int_0^t E_\ast(t-s)f(s)\;ds\right)\right\| \\
    &+\sum_{m=0}^k\binom{k}{m}\sum_{n=0}^{k-m}\binom{k-m}{n}\left\|I^k_{m,n}(t)\right\|,  
\end{align}
where $I^k_{m,n}(t):=\int_0^t A_0(t_\ast)W_m^{(m)}(t-s)\big(A_0(t_\ast)- A_0(s) + \lambda \mathcal{I}-A_1(s)\big)^{(n)}v_{k-m}^{(k-m-n)}(s)\;ds$, $m=0,1,\ldots,k$, $n=0,1,\ldots, k-m$. Now, we use Lemma~\ref{propertiesofFandE} $(i)$, $(iv)$ and $(v)$  to estimate $I^k_{m,n}(t_\ast)$. For $I_{0,0}^k (t)$
\begin{align*}
    \left\|I^k_{0,0}(t)\right\|\leq & \int_0^t\left\|A_0(t_\ast)E_\ast(t-s)\big(A_0(t_\ast)- A_0(s) + \lambda \mathcal{I}-A_1(s)\big)v_{k}^{(k)}(s)\right\|ds\\
    \leq & \int_0^t\|A_0(t_\ast)E_\ast(t-s)\big(A_0(t_\ast)- A_0(s)\big)v_{k}^{(k)}(s)\|ds\\
    &+  \int_0^t \|A_0^{1-\frac{\epsilon}{2}}(t_\ast)E_\ast(t-s)A_0^{\frac{\epsilon}{2}}(t_\ast)\left(\lambda \mathcal{I}-A_1(s) \right)v_{k}^{(k)}(s)\|ds,\;0<\epsilon <0.5\\
    \leq& \;C\left(\int_0^t(t-s)^{-1}(t_\ast -s)\|v_{k}^{(k)}(s)\|_{\dot{H}^{2}(\Omega)}ds+ \int_0^t(t-s)^{\epsilon\frac{\alpha}{2}-1}\|v_{k}^{(k)}(s)\|_{\dot{H}^{1+\epsilon}(\Omega)}ds\right),\;0<\epsilon <0.5.
\end{align*}
At $t=t_\ast$ use $\epsilon=1/4$, to find that
\begin{align}\label{A6ideqn}
    \left\|I^k_{0,0}(t_\ast)\right\|\leq & \;C\left(\int_0^{t_\ast}\|v_{k}^{(k)}(s)\|_{\dot{H}^{2}(\Omega)}ds+ \int_0^{t_\ast}(t_\ast-s)^{\frac{\alpha}{8}-1}\|v_{k}^{(k)}(s)\|_{\dot{H}^{2}(\Omega)}ds\right).
\end{align}
For $m=1,2,\ldots,k$, apply Lemma~\ref{propertiesofFandE} $(i)$, $(iii)$, $(iv)$ and $(v)$ 
 and repeat the previous argument to arrive at
\begin{align*}
    &\left\|I^k_{m,0}(t)\right\|\leq  \int_0^t\left\|A_0(t_\ast)W_m^{(m)}(t-s)\big(A_0(t_\ast)- A_0(s) + \lambda \mathcal{I}-A_1(s)\big)v_{k-m}^{(k-m)}(s)\right\|ds\\
    &\leq C\left(\int_0^t(t-s)^{-1}(t_\ast -s)\|v_{k-m}^{(k-m)}(s)\|_{\dot{H}^{2}(\Omega)}ds+ \int_0^t(t-s)^{\frac{\alpha}{8}-1}\|v_{k-m}^{(k-m)}(s)\|_{\dot{H}^{5/4}(\Omega)}ds\right)\\
    &\leq C\left(\int_0^t(t-s)^{-1}(t_\ast -s)\|(s^{k-m}u(s))^{(k-m)}(s)\|_{\dot{H}^{2}(\Omega)}ds+ \int_0^t(t-s)^{\frac{\alpha}{8}-1}\|(s^{k-m}u(s))^{(k-m)}(s)\|_{\dot{H}^{2}(\Omega)}ds\right).
\end{align*}
Thus, at $t=t_\ast$ an induction hypothesis yields
\begin{align}\label{A7ideqn}
    \left\|I^k_{m,0}(t_\ast)\right\|\leq & \;C\left(\|u_0\|_{\dot{H}^{2}(\Omega)}+\|f\|_{W^{k-m+1,1}(J;L^2(\Omega))}\right),\;m=1,2,\ldots,k.
\end{align}
For $m=0,n=1$, apply integration by parts, Lemma~\ref{propertiesofFandE} $(iii)$ and $(v)$, induction hypothesis, and product rule, to obtain
\begin{align}\label{A8ideqn}
    \nonumber\left\|I^k_{0,1}(t)\right\|\leq & \int_0^t\left\|A_0(t_\ast)W_0^{(-1)}(t-s)\big(A_0(t_\ast)- A_0(s) + \lambda \mathcal{I}-A_1(s)\big)^{(2)}v_{k}^{(k-1)}(s)\right\|ds\\
    \nonumber& + \int_0^t\left\|A_0(t_\ast)W_0^{(-1)}(t-s)\big(A_0(t_\ast)- A_0(s) + \lambda \mathcal{I}-A_1(s)\big)^{(1)}v_{k}^{(k)}(s)\right\|ds\\
\nonumber\leq & \;C\left(\int_0^t\left\|v_{k}^{(k-1)}(s)\right\|_{\dot{H}^2(\Omega)}ds + \int_0^t\left\|v_{k}^{(k)}(s)\right\|_{\dot{H}^2(\Omega)}ds\right)\\
\leq & \;C\left(\|u_0\|_{\dot{H}^{2}(\Omega)}+\|f\|_{W^{k,1}(J;L^2(\Omega))} + \int_0^t\left\|v_{k}^{(k)}(s)\right\|_{\dot{H}^2(\Omega)}ds\right).
\end{align}
Now, for $m=0,1,\dots,k$, $n=1,2,\ldots,k-m$, and $(m,n)\neq(0,1)$ use of integration by parts with Lemma~\ref{propertiesofFandE} $(i)$, and $(v)$, induction hypothesis, and product rule yields
\begin{align}\label{A9ideqn}
    \nonumber&\left\|I^k_{m,n}(t)\right\|\leq  \;\int_0^t\left\|A_0(t_\ast)W_m^{(m-1)}(t-s)\big(A_0(t_\ast)- A_0(s) + \lambda \mathcal{I}-A_1(s)\big)^{(n+1)}v_{k-m}^{(k-m-n)}(s)\right\|ds\\
    \nonumber&\hspace{1cm} + \int_0^t\left\|A_0(t_\ast)W_m^{(m-1)}(t-s)\big(A_0(t_\ast)- A_0(s) + \lambda \mathcal{I}-A_1(s)\big)^{(n)}v_{k-m}^{(k-m-n+1)}(s)\right\|ds\\
\nonumber &\hspace{.3cm}\leq\; C\left(\int_0^t\left\|v_{k-m}^{(k-m-n)}(s)\right\|_{\dot{H}^2(\Omega)}ds + \int_0^t\left\|v_{k-m}^{(k-m-n+1)}(s)\right\|_{\dot{H}^2(\Omega)}ds\right)\\
 &\hspace{.3cm}\leq\; C\left(\|u_0\|_{\dot{H}^{2}(\Omega)}+\|f\|_{W^{k-m-n+2,1}(J;L^2(\Omega))}\right).
\end{align}
At $t=t_\ast$, by applying Lemma~\ref{recurrencerelations_of_f}, and estimates (\ref{A6ideqn}--\ref{A9ideqn}) in (\ref{A5ideqn}), we obtain
\begin{align}\label{A10ideqn}
    \nonumber\left\|\frac{d^k}{dt^k}(t^ku(t))\Big|_{t=t_\ast}\right\|_{\dot{H}^2(\Omega)}\leq\;& C\Big(\|u_0\|_{\dot{H}^{2}(\Omega)}+\|f\|_{W^{k+1,1}(J;L^2(\Omega))}\\
    &+ \int_0^{t_\ast}\|(s^k u(s))^{(k)}(s)\|_{\dot{H}^{2}(\Omega)}ds+ \int_0^{t_\ast}(t_\ast-s)^{\frac{\alpha}{8}-1}\|(s^k u(s))^{(k)}(s)\|_{\dot{H}^{2}(\Omega)}ds\Big).  
\end{align}
Finally, as $t_\ast\in (0,T]$ is arbitrary, an appeal to the Lemma~\ref{fractionalgronwall} (Gr\"{o}nwall inequality) yields the assertion (\ref{inductionresult}) for $n=k$, and hence the result follows by induction. This completes the proof. 
\end{proof}
\begin{theorem}\label{reg_results_a1_L2} Let $f \in W^{k+1,1}(J;L^2(\Omega)),\;k=0,1,2$, $\|\mathcal{I}\phi\|_s\;\leq\;C_0\|\phi\|_s\; \forall \phi\in \dot{H}^s(\Omega),\;0\leq s \leq 1,$ and for $k=0,1,2,$
\begin{align*}
        & \left\|\frac{d^{k}}{dt^{k}}\boldsymbol{A}(\cdot,t)\right\|_{W^{1,\infty}(\Omega,\mathbb{R}^{d\times d})}+\left\|\frac{d^k}{dt^k}\boldsymbol{b}(\cdot,t)\right\|_{L^{\infty}(\Omega,\mathbb{R}^{d})} +\left\|\frac{d^k}{dt^k}c(\cdot,t)\right\|_{L^{\infty}(\Omega)}\;\leq\; C_1\quad  \forall t\in J,
\end{align*}
for some positive constants $C_0$ and $C_1$. Then there exists a positive constant $C=C(C_0,C_1, T, K, \alpha)$ depending on $C_0$, $C_1$, $T$, $K$, and $\alpha$ such that
\begin{align*}
    &\|(t^k u^{(k)}(t))\|_{L^{2}(\Omega)}\leq C t^{\alpha} \left(\|u_0\|_{\dot{H}^{2}(\Omega)}+\|f\|_{W^{k+1,1}(J;L^2(\Omega))}\right),\;k=1,2.
\end{align*}
\end{theorem}
\begin{proof}
Differentiate (\ref{solution_variation1}) with respect to $t$ to obtain
\begin{align*}
u^\prime(t)&=F_\ast^\prime(t)u_0 + \frac{d}{dt}\int_0^tE_\ast(t-s)(A_0(t_\ast)-A_0(s)+\lambda \mathcal{I}-A_1(s))u(s)\;ds  + \frac{d}{dt}\int_0^tE_\ast(t-s)f(s)\;ds,
\end{align*}
and then apply Lemma~\ref{propertiesofFandE} $(iii)$ and the change of variable with product rule of differentiation to get
\begin{align}\label{differ}
\nonumber&u^\prime(t)
= - E_\ast(t)A_0(0)u_0  +E_\ast(t)(\lambda \mathcal{I}-A_1(0))u_0 + E_\ast(t) f(0) +\int_0^tE_\ast(t-s)f^\prime(s)\;ds\\
&+  \int_0^t E_\ast(t-s)(A_0(s)+A_1(s))^\prime u(s)\;ds - \int_0^t E_\ast(t-s)(A_0(t_\ast)-A_0(s)+\lambda \mathcal{I}- A_1(s))u^\prime(s)\;ds.  
\end{align}
Now, an appeal to Lemma~\ref{propertiesofFandE} $(i)$, $(iv)$ and $(v)$, and $\|u(s)\|_{\dot{H}^{1}(\Omega)}\leq\|u(s)\|_{\dot{H}^{2}(\Omega)}$ yields 
\begin{align*}
&\|u^\prime(t)\| \leq ct^{\alpha-1}\|u_0\|_{\dot{H}^2(\Omega)} +ct^{\alpha-1}\|f(0)\| + \int_0^t\| E_\ast(t-s) f^\prime(s)\|ds+ \int_0^t\|E_\ast(t-s)(A_0(s)-A_1(s))^\prime u(s)\|ds\\
&+\int_0^t\|E_\ast(t-s)(A_0(t_\ast)-A_0(s))u^\prime(s)\|ds+ \int_0^t\|A_0^{\frac{1}{2}}(t_\ast)E_\ast(t-s)A_0^{-\frac{1}{2}}(t_\ast)(\lambda \mathcal{I} - A_1(s))u^\prime(s)\|ds\\
&\leq ct^{\alpha-1}\|u_0\|_{\dot{H}^2(\Omega)} +ct^{\alpha-1}\|f(0)\| + \int_0^t (t-s)^{\alpha-1}\| f^\prime(s)\|ds+ c\int_0^t(t-s)^{\alpha-1}\|u(s)\|_{\dot{H}^2(\Omega)}ds\\
&+ c\int_0^t(t-s)^{-1}(t_\ast-s)\|u^\prime(s)\|ds+ \int_0^t(t-s)^{\frac{\alpha}{2}-1}\|u^\prime(s)\|ds.
\end{align*}
 At $t=t_\ast$, an application of Theorem~\ref{reg_results_a1} with $k=0$ yields
\begin{align*}
\|u^\prime(t_\ast)\| &\leq ct_\ast^{\alpha-1} \Big( \|u_0\|_{\dot{H}^{2}(\Omega)}+\|f\|_{W^{2,1}(J;L^2(\Omega))} \Big) + \int_0^{t_\ast}\|u^\prime(s)\|ds+ \int_0^{t_\ast}(t_\ast-s)^{\frac{\alpha}{2}-1}\|u^\prime(s)\|ds,
\end{align*}
and thus, an application of Lemma~\ref{fractionalgronwall} (Gr\"{o}nwall inequality) yields
\begin{align*}
\|u^\prime(t_\ast)\| &\leq ct_\ast^{\alpha-1} \Big( \|u_0\|_{\dot{H}^{2}(\Omega)}+\|f\|_{W^{2,1}(J;L^2(\Omega))} \Big).
\end{align*}
As $t_\ast\in(0,T]$ was arbitrary, the result 
\begin{align}\label{L2normkis0}
\|tu^\prime(t)\| &\leq ct^{\alpha} \Big( \|u_0\|_{\dot{H}^{2}(\Omega)}+\|f\|_{W^{2,1}(J;L^2(\Omega))} \Big) \;\forall t\in(0,T],
\end{align}
 holds for $k=1$. For $k=2$, we note that 
\begin{align}\label{A4ideqnL2}
   \nonumber&(tu^\prime(t))^{\prime}=\; - (tE_\ast(t) )^{\prime} A_0(0)u_0+(tE_\ast(t) )^{\prime}(\lambda \mathcal{I}-A_1(0))u_0 +(tE_\ast(t) )^{\prime}f(0) +tE_\ast(t) f^{\prime}(0)+\int_0^tE_\ast(t-s)f^{\prime}(s)\;ds\\
   \nonumber&+t\int_0^tE_\ast(t-s)f^{\prime\prime}(s)\;ds+  \frac{d}{dt}\left(t\int_0^t E_\ast(t-s)(A_0^\prime(s)+A_1^\prime(s))u(s)\;ds \right)\\
   &- \frac{d}{dt}\left(t\int_0^t E_\ast(t-s)(A_0(t_\ast)-A_0(s)+\lambda \mathcal{I}- A_1(s))u^\prime(s)\;ds  \right),
   \end{align}
and therefore an appeal to Lemma~\ref{propertiesofFandE} $(i)$ and $(v)$, and Lemma~\ref{recurrencerelations_of_f} yields
\begin{align}\label{L2normKis1}
&\|(tu^\prime(t))^{\prime}\|\leq\;ct^{\alpha-1} \Big( \|u_0\|_{\dot{H}^{2}(\Omega)}+\|f\|_{W^{3,1}(J;L^2(\Omega))} \Big)+\|I_1(t)\|+\|I_2(t)\|
\end{align}
with
\begin{align}\label{I1}
\nonumber I_1(t)=&\;\frac{d}{dt}\left(t\int_0^t E_\ast(t-s)(A_0^\prime(s)+A_1^\prime(s))u(s)ds \right)\\
\nonumber=&\int_0^t E_\ast(t-s)(A_0(s)+A_1(s))^{\prime\prime} su(s)ds+  \int_0^t E_\ast(t-s)(A_0(s)+A_1(s))^{\prime} (su(s))^\prime ds \\
&+\int_0^t ((t-s)E_\ast(t-s))^\prime(A_0(s)+A_1(s))^{\prime} u(s)ds
\end{align}
and
\begin{align}
\nonumber I_2(t)=&\; \frac{d}{dt}\left(t\int_0^t E_\ast(t-s)(A_0(t_\ast)-A_0(s)+\lambda \mathcal{I}- A_1(s))u^\prime(s)ds  \right)\\
\nonumber=& \int_0^t E_\ast(t-s)(A_0(s)+ A_1(s))^\prime su^\prime(s)ds + \int_0^t E_\ast(t-s)(A_0(t_\ast)-A_0(s)+\lambda \mathcal{I}- A_1(s))(su^\prime(s))^\prime ds \\
  \label{I2} &+ \int_0^t ((t-s)E_\ast(t-s))^\prime(A_0(t_\ast)-A_0(s)+\lambda \mathcal{I}- A_1(s))u^\prime(s)ds,
\end{align}
where we have applied $t=(t-s)+s,\;0<s\leq t$, change of variables and product rule of differentiation.

Now, using Lemma~\ref{propertiesofFandE}  $(i)$ and $(v)$, and applying  $\|u(s)\|_{\dot{H}^{1}(\Omega)}\leq\|u(s)\|_{\dot{H}^{2}(\Omega)}$, we obtain the following estimate for $I_1(t)$
\begin{align*}
 \|I_1(t)\|&\leq c\int_0^t (t-s)^{\alpha-1}\|u(s)\|_{\dot{H}^2(\Omega)}ds + c\int_0^t (t-s)^{\alpha-1}\|su^\prime(s)\|_{\dot{H}^2(\Omega)}ds   
\end{align*}
and at $t=t_\ast$ apply Theorem \ref{reg_results_a1} for $k=0$ and $k=1$, to find that
\begin{align}\label{I1estimate}
 \|I_1(t_\ast)\|&\leq ct_\ast^{\alpha-1} \Big( \|u_0\|_{\dot{H}^{2}(\Omega)}+\|f\|_{W^{2,1}(J;L^2(\Omega))} \Big).
\end{align}
To estimate $I_2(t)$,  apply Lemma~\ref{propertiesofFandE}  $(i)$, $(iv)$ and $(v)$, and $\|u(s)\|_{\dot{H}^{1}(\Omega)}\leq\|u(s)\|_{\dot{H}^{2}(\Omega)}$ as follows
\begin{align*}
    &\|I_2(t)\|\leq \int_0^t \|E_\ast(t-s)(A_0(s)+ A_1(s))^\prime su^\prime(s)\|ds + \int_0^t \|E_\ast(t-s)(A_0(t_\ast)-A_0(s))(su^\prime(s))^\prime \|ds \\
   &+ \int_0^t \|A_0^{\frac{1}{2}}(t_\ast)E_\ast(t-s)A_0^{-\frac{1}{2}}(t_\ast)(\lambda \mathcal{I}- A_1(s))(su^\prime(s))^\prime \|ds+ \int_0^t \|((t-s)E_\ast(t-s))^\prime(A_0(t_\ast)-A_0(s))u^\prime(s)\|ds\\
   &+\int_0^t \|A_0^{\frac{1}{2}}(t_\ast)((t-s)E_\ast(t-s))^\prime A_0^{-\frac{1}{2}}(t_\ast)(\lambda \mathcal{I}- A_1(s))u^\prime(s)\|ds\\
   &\leq c\int_0^t (t-s)^{\alpha-1}\|su^\prime(s)\|_{\dot{H}^2(\Omega)}ds  + c\int_0^t(t-s)^{-1}(t_\ast -s)\|(su^\prime(s))^\prime\|ds +c\int_0^t(t-s)^{\frac{\alpha}{2}-1}\|(su^\prime(s))^\prime\|ds \\
   &+c\int_0^t(t-s)^{-1}(t_\ast -s)\|u^\prime(s)\|ds+c\int_0^t(t-s)^{\frac{\alpha}{2}-1}\|u^\prime(s)\|ds.
\end{align*}
At $t=t_\ast$, an application of Theorem~\ref{reg_results_a1} with $k=1$ and (\ref{L2normkis0}) shows
\begin{align}\label{I2estimate}
 \|I_2(t_\ast)\|&\leq ct_\ast^{\alpha-1} \Big( \|u_0\|_{\dot{H}^{2}(\Omega)}+\|f\|_{W^{2,1}(J;L^2(\Omega))} \Big)+ c\int_0^{t_\ast}\|(su^\prime(s))^\prime\|ds +c\int_0^{t_\ast}(t_\ast-s)^{\frac{\alpha}{2}-1}\|(su^\prime(s))^\prime\|ds.
\end{align}
Now, applying Lemma~\ref{recurrencerelations_of_f}, and estimates (\ref{I1estimate}--\ref{I2estimate}) in (\ref{L2normKis1}), we obtain
\begin{align*}
\|(tu^\prime(t))^{\prime}|_{t=t_\ast}\|\leq\;  ct_\ast^{\alpha-1} \Big( \|u_0\|_{\dot{H}^{2}(\Omega)}+\|f\|_{W^{3,1}(J;L^2(\Omega))} \Big)+ c\int_0^{t_\ast}\|(su^\prime(s))^\prime\|ds +c\int_0^{t_\ast}(t_\ast-s)^{\frac{\alpha}{2}-1}\|(su^\prime(s))^\prime\|ds. 
\end{align*}
Finally, as $t_\ast\in (0,T]$ is arbitrary, an appeal to the Lemma~\ref{fractionalgronwall} (Gr\"{o}nwall inequality) yields the result for $k=2$.
\end{proof}
\section{Proof of Theorem~\ref{DFGI} (Discrete fractional Gr\"{o}nwall inequality)}\label{appendix_sec_gronwall_b}
\begin{proof}
Apply the  definition of $D_{t_n}^{\alpha}$ in (\ref{dfgi1}) to arrive at
\begin{align*}
&\displaystyle{\sum_{k=1}^{j}}K^{j,k}_{1-\alpha} \left((v^k)^2-(v^{k-1})^2\right) \leq  \sum_{i=0}^j \lambda^j_{j-i}(v^i)^2 + v^j\xi^j + (\eta^j)^2 + (\zeta^j)^2, \quad 1\leq j \leq N.
\end{align*}  
After multiplying the above inequality by $P^{n,j}_{\alpha}$ and summing the index $j$ from $1$ to $n$, we obtain
\begin{align}
\label{dfgieqn1}
\nonumber\displaystyle{\sum_{j=1}^{n}P^{n,j}_{\alpha}\sum_{k=1}^{j}}K^{j,k}_{1-\alpha} \left((v^k)^2-(v^{k-1})^2\right) &\leq \sum_{j=1}^{n} P^{n,j}_{\alpha} \sum_{i=0}^j \lambda^j_{j-i}(v^i)^2 + \sum_{j=1}^{n} P^{n,j}_{\alpha} v^j\xi^j \\
&+ \sum_{j=1}^{n} P^{n,j}_{\alpha} (\eta^j)^2+ \sum_{j=1}^{n} P^{n,j}_{\alpha} (\zeta^j)^2, \quad 1\leq n \leq N.  
\end{align}
An exchange of order of summation and Lemma~\ref{discretekernelproperties} $(b)$ yields
\begin{align}
\label{dfgieqn2}
\nonumber\displaystyle{\sum_{j=1}^{n}P^{n,j}_{\alpha}\sum_{k=1}^{j}}K^{j,k}_{1-\alpha} \left((v^k)^2-(v^{k-1})^2\right) &= \sum_{k=1}^{n}\left(\sum_{j=k}^n P^{n,j}_{\alpha}K^{j,k}_{1-\alpha} \right)\left((v^k)^2-(v^{k-1})^2\right) \\
&= (v^n)^2 - (v^0)^2 , \quad 1\leq n\leq N.
\end{align}
Further, Lemma~\ref{discretekernelproperties} $(c)$ implies
\begin{align}
\label{dfgieqn3}&\sum_{j=1}^{n} P^{n,j}_{\alpha}  \leq \sum_{j=1}^{n} P^{n,j}_{\alpha} k_1(t_j)  \leq k_{1+\alpha}(t_n) = \frac{t_n^{\alpha}}{\Gamma(1+\alpha)} \leq 2 t_n^{\alpha} , \quad 1\leq n\leq N, 
\end{align}
where we have used $\Gamma(1+\alpha) \geq 2^{\alpha -1} \geq \frac{1}{2} ~ \forall ~\alpha \in [0,1]$. Thus, by using relations (\ref{dfgieqn2}) and (\ref{dfgieqn3}) in (\ref{dfgieqn1}), we obtain
\begin{align}\label{dfgieqn4}
& (v^n)^2  \leq (v^0)^2 + \sum_{j=1}^{n} P^{n,j}_{\alpha} \sum_{i=0}^j \lambda^j_{j-i}(v^i)^2 + \sum_{j=1}^{n} P^{n,j}_{\alpha} v^j \xi^j + 2t_n^\alpha\max_{1\leq j \leq n} (\eta^j)^2+ \sum_{j=1}^{n} P^{n,j}_{\alpha} (\zeta^j)^2, \quad 1\leq n \leq N.
\end{align}  
Now, define a non-decreasing finite sequence $\{\Phi_n \}_{n=1}^{N},$
\begin{align*}
\Phi_n :=  v^0 + \max_{1 \leq j \leq n}\sum_{i=1}^j P^{j,i}_{\alpha}\xi^i + \sqrt{2t_n^{\alpha}}\max_{1 \leq j \leq n} \eta^j + \max_{1 \leq j \leq n} \sqrt{\sum_{i=1}^{j} P^{j,i}_{\alpha} (\zeta^i)^2}, \quad 1\leq n \leq N,
\end{align*}
and derive the required estimate (\ref{dfgi2}), that is,
\begin{align}\label{dfgieqn5}
    v^n \leq 2 E_{\alpha}(2 \Lambda t_n^{\alpha})\;\Phi_n \quad \forall ~ 1 \leq n \leq N
\end{align}
using mathematical induction. 

For $n=1$, if $v^1 < v^0$ or $v^1 < \sqrt{2t_1^\alpha}\;\eta^1$ or $v_1 < \sqrt{P_{\alpha}^{1,1}(\zeta^1)^2}$, then the definition of $\Phi_1$ and $1=E_{\alpha}(0)\leq E_{\alpha}(2\Lambda t_{1}^{\alpha})$ yields the result (\ref{dfgieqn5}). Otherwise, $v^0 \leq v^1$, $\sqrt{2t_1^\alpha}\;\eta^1 \leq v^1$, and $\sqrt{P_{\alpha}^{1,1}(\zeta^1)^2} \leq v_1$ and hence, the inequality (\ref{dfgieqn4}) implies 
\begin{align*}
(v^1)^2
&\leq \left(v^0  +  v^{1}P^{1,1}_{\alpha}\sum_{i=0}^{1}\lambda_{1-i}^{1} +  P^{1,1}_{\alpha} \xi^1  + \sqrt{2 t_1^{\alpha}}\;\eta^1+ \sqrt{P_{\alpha}^{1,1}(\zeta^1)^2} \right)v^1\\
&\leq \left(v^0  +  \frac{1}{2}v^{1} +  P^{1,1}_{\alpha} \xi^1 +\sqrt{2 t_1^{\alpha}}\;\eta^1+ \sqrt{P_{\alpha}^{1,1}(\zeta^1)^2} \right)v^1=\left(\frac{1}{2}v^{1} + \Phi_{1} \right)v^1  \\
&\leq \left(\frac{1}{2}v^{1} + E_{\alpha}(2\Lambda t_{1}^{\alpha})\;\Phi_{1} \right)v^1, 
\end{align*}
where the estimates $ P^{1,1}_{\alpha}\sum_{i=0}^{1}\lambda_{1-i}^{1} \leq \Gamma(2-\alpha)\Delta t_1^{\alpha}\Lambda \leq \frac{1}{2}$ and $1=E_{\alpha}(0)\leq E_{\alpha}(2\Lambda t_{1}^{\alpha})$ have been used. Thus,
\begin{align*}
v^1 & \leq  2E_{\alpha}(2\Lambda t_1^{\alpha})\;\Phi_1
\end{align*}
and therefore the inequality (\ref{dfgieqn5}) holds for $n=1$.

Now, assume that the inequality (\ref{dfgieqn5}) holds for $1 \leq k \leq m-1,\;m\leq N,$ i.e.,
 \begin{align}\label{inducthyp}
 v^k \leq 2  E_{\alpha}(2\Lambda t_k^{\alpha})\;\Phi_k, \quad 1 \leq k \leq m-1.
 \end{align}
Thus, there exists an integer $m_0,\;1 \leq m_0 \leq m-1,$ such that $v^{m_0}= \max_{1 \leq j \leq m-1} v^j$ and corresponding to this index $m_0$ there are two cases.

\noindent \textbf{{Case-I:}} $\boldsymbol{v^m \leq v^{m_0}.}$ In this case, the induction hypothesis (\ref{inducthyp}) along with the non-decreasing properties of $E_{\alpha}$ and $\Phi_n$ yields the result.\\
\noindent \textbf{{Case-II:}} $\boldsymbol{v^m > v^{m_0}.}$ If $v^m < v^0$ or $v^m < \sqrt{2 t_m^{\alpha}}\;\max_{1 \leq j \leq m}\eta^j$ or $v^m < \displaystyle\max_{1 \leq j \leq m}\sqrt{\sum_{i=1}^jP_{\alpha}^{j,i}(\zeta^i)^2}$, then the definition of $\Phi_n$ and $1=E_{\alpha}(0)\leq E_{\alpha}(2\Lambda t_{m}^{\alpha})$ implies that the inequality (\ref{dfgieqn5}) holds for $n=m$. Otherwise, $v^0 \leq v^m$ and $ \sqrt{2 t_m^{\alpha}}\; \max_{1 \leq j \leq m}\eta^j \leq v^m$ and $ \displaystyle\max_{1 \leq j \leq m}\sqrt{\sum_{i=1}^jP_{\alpha}^{j,i}(\zeta^i)^2} \leq v^m$ hence, (\ref{dfgieqn4}) yields
\begin{align*}
(v^m)^2 & \leq \left(v^0 + \sum_{j=1}^{m} P^{m,j}_{\alpha} \sum_{i=0}^j \lambda^j_{j-i}v^i + \sum_{j=1}^{m} P^{m,j}_{\alpha} \xi^j +\sqrt{2t_m^{\alpha}}\;\max_{1\leq j \leq m}\eta^j + \displaystyle\sqrt{\sum_{i=1}^mP_{\alpha}^{m,i}(\zeta^i)^2}\right)v^m\\
&\leq \Bigg(v^0  + \max_{1\leq k \leq m}\sum_{j=1}^{k} P^{k,j}_{\alpha} \xi^j + \sqrt{2t_m^{\alpha}}\;\max_{1\leq j \leq m}\eta^j + \displaystyle\max_{1 \leq k \leq m}\sqrt{\sum_{i=1}^kP_{\alpha}^{k,i}(\zeta^i)^2}  +  \sum_{j=1}^{m-1} P^{m,j}_{\alpha} \sum_{i=0}^j \lambda^j_{j-i}v^i  \\
&+ v^m P^{m,m}_{\alpha} \sum_{i=0}^m \lambda^m_{m-i} \Bigg)v^m\\
&\leq  \left(\Phi_m  +  \sum_{j=1}^{m-1} P^{m,j}_{\alpha} \sum_{i=0}^j \lambda^j_{j-i}v^i  + \frac{1}{2}v^m \right)v^m,
\end{align*}
where $P^{m,m}_{\alpha} \sum_{i=0}^m \lambda^m_{m-i} \leq \Gamma(2-\alpha)\Delta t_m^{\alpha}\Lambda \leq \frac{1}{2}$ is used. Thus,
\begin{align}\label{substitute}
v^m &\leq 2\Phi_m + 2\sum_{j=1}^{m-1} P^{m,j}_{\alpha} \sum_{i=0}^j \lambda^j_{j-i}v^i. 
\end{align}
To estimate the last term in (\ref{substitute}), after applying the induction hypothesis (\ref{inducthyp}), use $v^0 \leq 2 E_{\alpha}(2 \Lambda t _j^{\alpha})\Phi_j,$  $\sum_{i=0}^j \lambda^j_{j-i} \leq \Lambda,$ $\Phi_j\leq \Phi_{j+1} ~ 1 \leq j \leq N-1,$ and Lemma~\ref{discretekernelproperties} $(e)$ to obtain
\begin{align}\label{substitute1}
 \nonumber &2\sum_{j=1}^{m-1} P^{m,j}_{\alpha} \sum_{i=0}^j \lambda^j_{j-i}v^i  \leq    2\sum_{j=1}^{m-1} P^{m,j}_{\alpha}\left( \lambda^j_{j}v^0 + 2 \sum_{i=1}^j \lambda^j_{j-i} E_{\alpha}(2\Lambda t_i^{\alpha})\Phi_i\right)\\
 \nonumber& \leq    4\Lambda \sum_{j=1}^{m-1} P^{m,j}_{\alpha}   E_{\alpha}(2\Lambda t_j^{\alpha})\Phi_j\\  
 &\leq  4\Phi_m\Lambda \sum_{j=1}^{m-1} P^{m,j}_{\alpha}   E_{\alpha}(2\Lambda t_j^{\alpha}) 
 \leq  2\Phi_m E_{\alpha}(2\Lambda t_m^{\alpha}) -2\Phi_m. 
\end{align}
An application of the estimate (\ref{substitute1}) in (\ref{substitute}) yields
\begin{align*}
     & v^m \leq 2  E_{\alpha}(2\Lambda t_m^{\alpha})\;\Phi_m,
\end{align*}
that is, the inequality (\ref{dfgieqn5}) holds for $n=m$. Thus, the mathematical induction confirms the result (\ref{dfgieqn5}) and completes the rest of the proof.
\end{proof}

\end{document}